\pgfplotsset{compat=1.14}
\tikzset{discont/.style={decoration={zigzag,segment length=12pt, amplitude=4pt},decorate}}
\pgfplotsset{compat=1.11}
\tikzset{
	v1/.style={line width=.5pt,blue!33!black},
	v2/.style={line width=.5pt,blue!66!black},
	v3/.style={line width=.5pt,blue!33},
	v4/.style={line width=.5pt,blue!66},
	v5/.style={line width=.5pt,black}
}
\newcommand{\boundellipse}[3]
{(#1) ellipse (#2 and #3)
}
\definecolor{dred}{HTML}{C11B17}
\definecolor{dgreen}{HTML}{41A317}
\definecolor{dblue}{HTML}{00008B}
\definecolor{niceblue}{HTML}{00008B}
\definecolor{brilliantrose}{rgb}{1.0, 0.33, 0.64}
\definecolor{gold}{HTML}{ffd700}
\definecolor{lgold}{HTML}{ffe140}
\newcommand{\cemph}[1]{\textcolor{dred}{\emph{#1}}}
\newenvironment{smallpmatrix}{\left( \begin{smallmatrix}}{\end{smallmatrix} \right)}
\newcommand{\smallmat}[1]{\begin{smallpmatrix} #1 \end{smallpmatrix}}
\newcommand{\R}{\mathbb R}
\newcommand{\K}{\mathcal K}
\newcommand{\B}{\mathbb{B}}
\newcommand{\wrt}{w.r.t.}
\newcommand{\conv}{\mathrm{conv}}
\newcommand{\bd}{\mathrm{bd}}
\newcommand{\ext}{\mathrm{ext}}
\newcommand{\pos}{\mathrm{pos}}
\newcommand{\aff}{\mathrm{aff}}
\newcommand{\inter}{\mathrm{int}}
\newcommand{\lin}{\mathrm{lin}}
\newcommand{\HH}{\mathbb{H}}
\newcommand{\GH}{{\mathbb{GH}}}
\definecolor{zzttqq}{rgb}{0.6,0.2,0}
			\definecolor{ccqqqq}{rgb}{0.8,0,0}
\definecolor{ffvvqq}{rgb}{1,0.3333333333333333,0}
\definecolor{zzffqq}{rgb}{0.6,1,0}
\definecolor{qqwuqq}{rgb}{0,0.39215686274509803,0}
\definecolor{ffzzqq}{rgb}{1,0.6,0}
\definecolor{ffqqqq}{rgb}{1,0,0}
\definecolor{zzttqq}{rgb}{0.6,0.2,0}
\definecolor{uuuuuu}{rgb}{0.26666666666666666,0.26666666666666666,0.26666666666666666}
\newtheorem{thm}{Theorem}[section]
\newtheorem{lemma}[thm]{Lemma}
\newtheorem{remark}[thm]{Remark}
\newtheorem{proposition}[thm]{Proposition}
\newtheorem{cor}[thm]{Corollary}
\newtheorem{example}[thm]{Example}
\begin{document}

\title[Inequalities relating symmetrizations of convex bodies]{From inequalities relating symmetrizations of \\ convex bodies to the Diameter-width ratio for \\ complete and pseudo-complete convex sets}


\author[R. Brandenberg]{Ren\'e Brandenberg}
\author[K. von Dichter]{Katherina von Dichter}
\author[B. Gonz\'alez Merino]{Bernardo Gonz\'alez Merino}

\begin{abstract}
For a Minkowski centered convex compact set $K$ we define $\alpha(K)$ to be the smallest possible factor  to cover $K \cap (-K)$ by a rescalation of $\conv(K\cup (-K))$ and give a complete description of the possible values of $\alpha(K)$ in the planar case in dependence of the Minkowski asymmetry of $K$. As a side product, we show that, if the asymmetry of $K$ is greater than the golden ratio, the boundary of $K$ intersects the boundary of its negative $-K$ always in exactly 6 points.
As an application, we derive bounds for the diameter-width-ratio for pseudo-complete and complete sets, again in dependence of the Minkowski asymmetry of the convex bodies, 
tightening those depending solely on the dimension given in a recent result of Richter in 2018.
\end{abstract}

\keywords{Convex sets, Symmetrizations, Symmetry Measures, Completeness, Geometric inequalities, Diameter, Width, Complete Systems of Inequalities
}

\date{\today}\maketitle
\section{Introduction and Notation}
Any set $A \subset \R^n$ fulfilling $A = t - A$ for some $t \in \R^n$ is called  \cemph{symmetric} and \cemph{0-symmetric} if $t=0$.
We denote the family of all \cemph{(convex) bodies} (full-dimensional compact convex sets) by $\K^n$ and the family of 0-symmetric bodies by $\K^n_0$.
For any $K \in \K^n$ the \cemph{gauge function} $\| \cdot \|_K : \R^n \to \R$ is defined as 
\[
\|x\|_K=\inf\{\rho>0:x\in\rho K\}. 
\]
In case $K \in \K^n_0$, we see that $\| \cdot \|_K$ defines a norm. 
However, even for a non-symmetric unit ball $K$, one may approximate the gauge function by the norms induced from symmetrizations of $K$ 
\begin{equation}\label{eq:norms}
\|x\|_{\conv(K\cup (-K))} \leq \|x\|_{K} \leq \|x\|_{K \cap (-K)}.
\end{equation}

It is natural to request that $K \cap (-K)= K= \conv(K\cup (-K))$ if $K$ is symmetric, which is true if and only if $0$ is the center of symmetry of $K$. This motivates the definition of a meaningful center for general bodies $K$. We introduce one of the most common asymmetry measures, which is best suited to our purposes, and choose the center matching it. 

The \cemph{Minkowski asymmetry} of $K$ is defined as 
\[
s(K):=\inf \{ \rho >0: K-c \subset \rho (c-K), \ c \in \R^n \},
\]
and a \cemph{Minkowski center} of $K$ is any $c \in \R^n$ such that $K-c \subset s(K)(c-K)$ \cite{BrG2}. Moreover, if $0$ is a Minkowski center, we say $K$ is \cemph{Minkowski centered}. It is well-known that $s(K) \in [1,n]$ for all $K \in \K^n$, with $s(K)=1$ if and only if $K$ is symmetric and $s(K)=n$ if and only if $K$ is a fulldimensional simplex \cite{Gr}.

For any $K, C\in\K^n$ we say $K$ is \cemph{optimally contained} in $C$, and denote it by $K\subset^{opt}C$, if $K\subset C$ and $K\not \subset_t \rho C$ for any $\rho \in [0,1)$. 

For $K \in \K^n$ we define $\alpha(K)>0$ such that $K  \cap (-K) \subset^{opt} \alpha(K) \, \conv(K \cup (-K))$.
Notice that there always exists some $x \in \R^n$ such that $ \alpha(K) \|x\|_{K \cap (-K)} = \|x\|_{\conv(K\cup (-K))}$, which means that we have equality for that $x$ in the complete chain in \eqref{eq:norms} if $\alpha(K)=1$. 

In \cite{BDG1} we started an investigation of the region of all possible values for the parameter $\alpha(K)$ for Minkowski centered $K \in\K^n$
in dependence of the asymmetry of $K$. It has been shown in \cite[Lemma 3.2]{BDG1} that for a Minkowski centered fulldimensional simplex $S$ we have
\[
\alpha(S)= 
\begin{cases}
1, & \text{if} \,\, n \, \text{is odd, and} \\
\frac{n}{n+1}, & \text{if} \,\, n \, \text{is even.}  
\end{cases}
\]
Moreover, it is shown in \cite[Theorem 1.7]{BDG1} that $\alpha(K) \geq \frac{2}{s(K)+1}$ for all Minkowski centered $K \in \K^n$, and that in the planar case $\alpha(K)=1$ implies $s(K) \leq \varphi$, where $\varphi = \frac{1+\sqrt{5}}{2} \approx 1.61$ denotes the \cemph{golden ratio}.

The main result of the present work is a complete 
description of the possible $\alpha$ values of $K$ in dependence of its asymmetry (c.f.~Figure \ref{fig:alpha-region}).

\begin{thm}\label{thm:PlanarCase}
Let $K \in\K^2$ be Minkowski centered. Then 
\[
\frac{2}{s(K)+1} \leq \alpha(K) \leq \min \left\{ 1, \frac{s(K)}{s(K)^2-1} \right\}.
\] 
Moreover, for every pair 
$(\alpha,s)$, such that $1 \leq s \leq 2$ and $\frac{2}{s+1} \leq \alpha \leq \min \left\{ 1, \frac{s}{s^2-1} \right\}$, there exists a Minkowski centered $K \in\K^2$, such that $s(K)=s$ and $\alpha(K)=\alpha$.
\end{thm}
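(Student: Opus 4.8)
The plan is to treat the two assertions separately, taking the lower bound $\alpha(K)\ge \frac{2}{s(K)+1}$ from \cite[Theorem 1.7]{BDG1} as given and the bound $\alpha(K)\le 1$ as immediate from $K\cap(-K)\subseteq\conv(K\cup(-K))$ (cf.\ \eqref{eq:norms}). The entire difficulty of the first assertion is the estimate $\alpha(K)\le \frac{s(K)}{s(K)^2-1}$, which only bites once $s(K)>\varphi$ (for $s\le\varphi$ one has $\frac{s}{s^2-1}\ge 1$, so the bound is vacuous and matches the known implication $\alpha(K)=1\Rightarrow s(K)\le\varphi$). I would first rewrite $\alpha$ as a ratio of radial functions, $\alpha(K)=\max_{u\in\S^1}\rho_{K\cap(-K)}(u)/\rho_{\conv(K\cup(-K))}(u)$, and use $\rho_{K\cap(-K)}(u)=\min\{\rho_K(u),\rho_K(-u)\}$, so that the competition between the two symmetrizations is governed entirely by the antipodal behaviour of the radial function of $K$.

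Here the key structural step is the six-point lemma advertised in the abstract. I would characterise $\bd K\cap\bd(-K)$ as the set of directions with $\rho_K(u)=\rho_K(-u)$, i.e.\ the zeros of the continuous function $g(u):=\rho_K(u)-\rho_K(-u)$, which satisfies $g(-u)=-g(u)$. A parity count then shows that in the generic transversal situation the number of such zeros is $\equiv 2\pmod 4$, so the admissible counts are $2,6,10,\dots$. One has to show that $s(K)>\varphi$ pins this number down to exactly $6$: the optimal Minkowski containment $K\subseteq -s(K)\,K$ forces enough crossings to exclude the two-point case, while a sufficiently large asymmetry rigidifies the radial function enough to exclude ten or more, with the threshold $\varphi$ arising precisely from matching these two obstructions. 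I expect this lemma, establishing the count as \emph{exactly} six rather than merely at least six, to be the main obstacle.

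Granting the six-point structure, $\bd(K\cap(-K))$ and $\bd\conv(K\cup(-K))$ each decompose into three arcs of $\bd K$ alternating with three arcs of $\bd(-K)$, the hull additionally bridging the re-entrant crossing points by edges. The maximising direction $u^*$ for the radial ratio must then lie on one such arc, where $\rho_{K\cap(-K)}$ follows an arc of $-K$ (say) while $\rho_{\conv(K\cup(-K))}$ is realised either by the opposite arc of $K$ or by a bridging edge. In either case I can localise the estimate to the intersection points $\pm p_1,\pm p_2,\pm p_3$ together with the defining inequalities $\rho_K\le s\,\rho_{-K}$ and $\rho_{-K}\le s\,\rho_K$, and a short planar computation should produce the clean value $\frac{s}{s^2-1}$; checking the triangle $s=2$ against the known $\alpha=\frac23=\frac{2}{4-1}$ and the boundary case $s=\varphi$ against $\frac{\varphi}{\varphi^2-1}=1$ gives confidence that the bound is sharp.

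For the realizability statement I would use a two-stage continuity/intermediate-value scheme. First, for each fixed $s\in[1,2]$ I exhibit one Minkowski centered $K$ with $s(K)=s$ attaining the upper envelope $\min\{1,\frac{s}{s^2-1}\}$ (triangle-like bodies for $s\ge\varphi$, the $\alpha=1$ examples of \cite{BDG1} for $s\le\varphi$) and one attaining the lower envelope $\frac{2}{s+1}$ (the equality cases of \cite[Theorem 1.7]{BDG1}). Then I connect these two bodies through an explicit one-parameter family of Minkowski centered planar bodies of constant asymmetry $s$, most cleanly a family of polygons whose vertices are prescribed so that $s(\cdot)\equiv s$ is maintained, and invoke continuity of $\alpha$ together with the intermediate value theorem to realise every intermediate value of $\alpha$. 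Sweeping $s$ over $[1,2]$ then fills the whole region. The delicate points are keeping $s$ exactly constant along the connecting family and verifying that $\alpha$ varies continuously, but no single step here is as hard as the six-point lemma.
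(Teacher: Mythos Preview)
Your high-level architecture matches the paper's: the lower bound and $\alpha\le 1$ are quoted from \cite{BDG1}, the six-point structure of $\bd(K)\cap\bd(-K)$ for $s(K)>\varphi$ is the key to the new upper bound, and realizability is obtained by an intermediate-value argument along a one-parameter family of polygons with fixed asymmetry (the paper starts from $K_s=S\cap(-sS)$ and rotates two pairs of edges to reach $K_{max,s}$, which is exactly your scheme). But the two hard steps are not actually filled in.

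For the six-point lemma, the parity observation together with the well-spread asymmetry points does give at least six crossings, but ``large asymmetry rigidifies the radial function'' is not an argument for at most six. The paper's mechanism is specific and different: if there were more than six crossings, some pair of consecutive ones $z^1,z^2$ would have no $\pm p^i$ in $\pos(\{z^1,z^2\})$; slicing $K$ by a supporting line at a point $z$ between them produces $K'$ with $s(K')=s(K)$ and with parallel supporting lines at $\pm z$, hence $\alpha(K')=1$, contradicting the golden-house bound $s\le\varphi$ from \cite{BDG1}. The threshold $\varphi$ thus enters via a reduction to that known equality case, not via any rigidity of $\rho_K$. Likewise, deriving $\frac{s}{s^2-1}$ from the six-point structure is not a ``short planar computation'' at the maximising direction. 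The paper first replaces $K$ by a polygonalised $K'$ with $\conv(K'\cup(-K'))=\conv(\{\pm sp^1,\pm sp^2,\pm sp^3\})$ and $K'\cap(-K')=K\cap(-K)$ (so $s$ is preserved while $\alpha$ does not decrease), then further reduces to an explicit pentagon $\bar K$ by moving $p^1$ onto $[-sp^2,-sp^3]$ and onto the ray through a computed point $d^1$, and only after this normalisation solves a quadratic in $s$ in coordinates. This two-stage reduction---showing $s(\bar K)=s(K)$ and $\alpha(\bar K)\ge\alpha(K)$---is the substance of the upper bound, and your proposal contains no analogue of it.
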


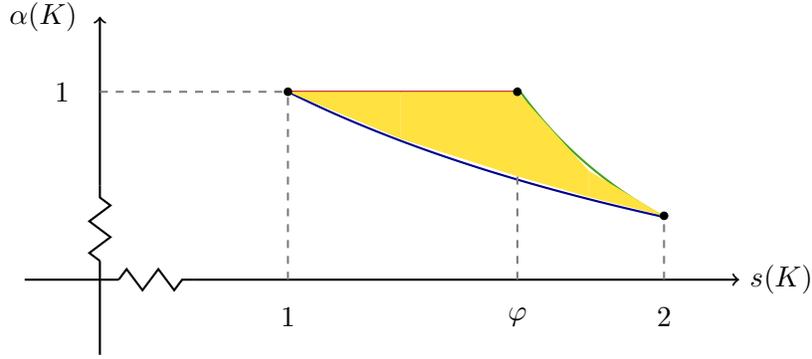
\begin{figure}[ht]
  \begin{tikzpicture}[scale=5]
    \draw[thick, discont] (0.05,0) -- (0.25,0);
    \draw[thick, discont] (0,0.05) -- (0,0.25);
    \draw [thick] (-0.2,0) -- (0.05,0);
    \draw [thick] (0,-0.2) -- (0,0.05);
    \draw[->] [thick] (0.25, 0) -- (1.7, 0) node[right] {$s(K)$};
    \draw[->] [thick] (0, 0.25) -- (0, 0.7);

    \draw [thick, dred,shift={(-0.5,-0.5)}] (1,1) -- (1.61,1);
    \draw[thick, dblue, domain=1:2, smooth, variable=\x, dblue,shift={(-0.5,-0.5)}]  plot ({\x}, {2/(\x+1)});
    \draw[thick, dgreen, domain=1.61:2, smooth, variable=\x,shift={(-0.5,-0.5)}]  plot ({\x}, {(\x)/((\x)^2-1)});
    \fill [fill=lgold, fill opacity=0.7, domain=1:1.3, variable=\x,shift={(-0.5,-0.5)}] (1,1) -- (1.3,1)-- (1.3,0.875)--(1,1);
    \fill [fill=lgold, fill opacity=0.7, domain=1.3:1.61, variable=\x,shift={(-0.5,-0.5)}] (1.3,1) -- (1.61,1)-- (1.61,0.775)--(1.3,0.875);
    \fill [fill=lgold, fill opacity=0.7, domain=1.3:1.61, variable=\x,shift={(-0.5,-0.5)}] (1.61,1) -- (1.8,0.79)--(1.8,0.72)-- (1.61,0.775)--(1.61,1);
    \fill [fill=lgold, fill opacity=0.7, domain=1.8:2, variable=\x,shift={(-0.5,-0.5)}] (1.8,0.79) -- (2,0.67)--(1.8,0.72)-- (1.8,0.79)--(1.8,0.79);
    
    \draw [thick, dashed,gray] (0,0.5) -- (0.5,0.5);
    \draw [thick, dashed,,gray] (1.11,0)--(1.11,0.28);
    \draw [thick, dashed,gray] (1.5,0) -- (1.5,0.17);
    \draw [thick, dashed,gray] (0.5,0) -- (0.5,0.5);
    
    \draw [fill,shift={(-0.5,-0.5)}] (1,1) circle [radius=0.01];
    \draw [fill,shift={(-0.5,-0.5)}] (1.61,1) circle [radius=0.01];
    \draw [fill,shift={(-0.5,-0.5)}] (2,0.67) circle [radius=0.01];
    
    \draw (-0.15,0.7) node {$\alpha(K)$};
    \draw (-0.1,0.5) node {$1$};
    \draw (0.5,-0.1) node {$1$};
    \draw (1.11,-0.1) node {$\varphi$};%
    \draw (1.5,-0.1) node {$2$};
     \end{tikzpicture}
     \caption{Region of possible values for the parameter $\alpha(K)$ for Minkowski centered $K \in\K^2$ (yellow): $\alpha(K) \geq \frac{2}{s+1}$ (blue); $\alpha(K) \leq 1$ for $s \leq \varphi$ (red), $\alpha(K) \leq \frac{s}{s^2-1}$ for $s \ge \varphi$ (green). Vertices are given by $0$-symmetric $K \in\K^2$ ($s=1$), the Golden House $\GH$ (see definition in Proposition \ref{prop:GoldenHouse}) ($s=\varphi$) and triangles ($s=2$).}
     \label{fig:alpha-region}
\end{figure}

Observe that from Theorem \ref{thm:PlanarCase} directly follows for Minkowski centered $K \in\K^2$ that
\begin{equation} \label{ineq:alpha_s}
\frac{1}{s(K)} \leq \frac{2}{s(K)+1} \leq \alpha(K),
\end{equation}
with equality if and only if $s(K)=1$.

While developing the proof of Theorem \ref{thm:PlanarCase}, we also made another interesting observation, for which we believe a separate theorem is justified.

\begin{thm}\label{thm:crossings}  
Let $K \in\K^2$ be Minkowski centered with $s(K) \geq \varphi$. Then the set
$\bd(K) \cap \bd(-K)$ consists of exactly 6 points.
\end{thm}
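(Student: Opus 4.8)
The plan is to translate the statement into counting the zeros of a single scalar function on the circle. Since $0 \in \inter K$ is a Minkowski center, both $K$ and $-K$ are star-shaped about $0$ with continuous radial functions, and because $\bd(-K) = -\bd(K)$ one has $p \in \bd(K) \cap \bd(-K)$ if and only if $p \in \bd(K)$ and $-p \in \bd(K)$; that is, the common boundary points are exactly the endpoints of those chords of $K$ through $0$ that are bisected by $0$. Encoding this through $g(u) := \rho_K(u) - \rho_K(-u)$ for $u \in \S^1$, where $\rho_K$ denotes the radial function of $K$, the crossing set corresponds to the zero set of $g$, it is centrally symmetric, and each bisected chord contributes the antipodal pair of its endpoints. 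I would first dispose of the degenerate possibilities: a whole arc of zeros of $g$ would force $K$ to possess an edge whose reflection in $0$ is again an edge of $K$, while a tangential (non-sign-changing) zero would force a matching flat contact. I expect both to be incompatible with $s(K) \geq \varphi$, so that $g$ has only finitely many, transversal, zeros and it suffices to count its sign changes.

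The next step is a parity observation that already pins down the count modulo $4$. The function $g$ is continuous and satisfies $g(-u) = -g(u)$, so along a half-circle from $u$ to $-u$ it passes from a value to its negative and therefore changes sign an odd number of times; central symmetry then doubles this, so the total number of crossings is congruent to $2$ modulo $4$, i.e.\ it lies in $\{2, 6, 10, \dots\}$. Consequently the theorem reduces to two statements: that the count is not $2$ (a lower bound) and that it is not $\geq 10$ (an upper bound).

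For the lower bound I would argue that a single bisected chord is incompatible with $0$ being a Minkowski center. If there were only two crossings, then $g$ would be positive on one open half-circle of directions and negative on the antipodal one, so $K$ reaches strictly farther than its reflection in every direction of a half-plane. This is the signature of a dominant first radial mode, a near-translation, and I would show that it permits re-centering $K$ so as to strictly decrease the covering factor $\max_v h_K(v)/h_K(-v)$, contradicting the balance (positive spanning of the contact directions) that characterises a Minkowski center; this is in the same spirit as the classical fact that a balanced centre is the midpoint of at least three chords. Thus at least six crossings must occur.

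The upper bound, that $s(K) \geq \varphi$ forbids ten or more crossings, is where I expect the real difficulty and where the golden ratio must enter. Ten crossings mean five bisected chords, so $K$ and $-K$ interlace in five alternating caps; heuristically, the more finely $K$ and $-K$ interlace, the closer $K$ is to symmetric and the smaller $s(K)$ is forced to be, and the precise threshold at which five caps can still be sustained should be exactly $\varphi$, realised in the limit by the Golden House of Proposition~\ref{prop:GoldenHouse}. Making this sharp is the crux: the radial interlacing must be converted into a quantitative bound on the support-function asymmetry $s(K)$, presumably by isolating the extremal many-cap configuration and optimising, and by leaning on the inequality $\alpha(K) \leq s(K)/(s(K)^2 - 1)$ from Theorem~\ref{thm:PlanarCase} together with the implication $\alpha(K) = 1 \Rightarrow s(K) \leq \varphi$ of \cite{BDG1}, whose common extremal object is precisely the $s = \varphi$ vertex. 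Granting the two bounds, the parity constraint leaves exactly six crossings, which proves the theorem.
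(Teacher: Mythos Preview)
Your radial-function setup and the parity observation $|\bd(K)\cap\bd(-K)|\equiv 2\pmod 4$ are valid, but the substance of the theorem lies elsewhere and your proposal has real gaps.

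The degeneracy step (ruling out arcs or tangential zeros of $g$) is stated as an expectation, not proved; the paper handles it by noting that an accumulation of points in $\bd(K)\cap\bd(-K)$ forces a point where $K$ and $-K$ share a supporting halfspace, whence $\alpha(K)=1$ by Proposition~\ref{prop:old_results} and $s(K)\le\varphi$ by Proposition~\ref{prop:GoldenHouse}. Your lower bound via ``re-centering to decrease the covering factor'' is also only heuristic; the paper's argument is direct and does not use parity at all: a well-spread triple $p^1,p^2,p^3$ of asymmetry points exists by Corollary~\ref{cor:zero-inside}, and since along the ray through $p^i$ one has $p^i\in\bd(K)$ but $sp^i\in\bd(-K)$, while along the ray through $-p^j$ one has $-sp^j\in\bd(K)$ but $-p^j\in\bd(-K)$, each of the six open cones $\inter(\pos(\{p^i,-p^j\}))$, $i\neq j$, must contain a crossing.

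The serious gap is the upper bound. Invoking $\alpha(K)\le s(K)/(s(K)^2-1)$ from Theorem~\ref{thm:PlanarCase} is circular: that inequality is established through Lemma~\ref{lem:beforePlanarCase}, whose very first step uses Theorem~\ref{thm:crossings}. The only non-circular tool you name, $\alpha=1\Rightarrow s\le\varphi$, is indeed the right one, but you are missing the bridge from ``too many crossings'' to the existence of a body with $\alpha=1$. The paper supplies it with a short cut-off construction: if there are more than six crossings there are more than six sectors between consecutive ones, so by pigeonhole some sector $\pos(\{z^1,z^2\})$ contains none of the six points $\pm p^i$. Pick $z\in\bd(K)\cap\inter(-K)$ in that sector with a supporting halfspace $H^\le_{a,\rho}$ of $K$ at $z$, and set $K':=K\cap(-H^\le_{a,\rho})$. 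The cap removed lies entirely in the antipodal sector $-\pos(\{z^1,z^2\})$, so all $p^i$ and $-sp^i$ survive and $s(K')=s(K)$; but now $\pm z\in\bd(K')$ are supported by the parallel halfspaces $H^\le_{\pm a,\rho}$, so $\alpha(K')=1$ by Proposition~\ref{prop:old_results}, contradicting Proposition~\ref{prop:GoldenHouse} (with the Golden-House uniqueness disposing of the boundary case $s=\varphi$). Your five-cap interlacing picture is suggestive of the right extremal object but does not produce this construction, and without it the argument does not close.
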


The value $\varphi$ above is the smallest possible quantity such that Theorem \ref{thm:crossings} stays true.
 
Consider $K \in \K^n$ and $C \in \K^n_0$. For $s \in \R^n \setminus \{0\}$ the \cemph{$s$-breadth} of $K$ w.r.t.~$C$ is the relative distance between the two parallel supporting hyperplanes of $K$ with normal vector $s$, i.e., 
\[
b_s(K,C) := \frac{ \max_{x,y \in K} s^T (x-y)}{\max_{x \in C} s^T x}.
\]
The minimal $s$-breadth 
\[w(K,C):= \min_{s \in \R^n \setminus \{0\}} b_s(K,C)\] 
and the maximal $s$-breadth 
\[D(K,C):=\max_{s \in \R^n \setminus \{0\}} b_s(K,C)\] 
are called \cemph{width} and \cemph{diameter} of $K$ w.r.t.~$C$, respectively. 
$K$ is said to be of \cemph{constant width} with respect to $C$, if $b_s(K,C)$ is constant in dependence of $s \in \R^n \setminus \{0\}$, i.e.,  
$w(K,C)=D(K,C)$ and  
$K-K = D(K,C)C$ \cite{Egg}.
Finally, $K$ is called \cemph{(diametrically) complete} w.r.t.~$C$, if any proper superset of $K$ has a greater diameter than $K$. 

The most famous example of a set of constant width w.r.t.~the euclidean ball is the Reuleaux triangle (see, e.g.~\cite{BrG}). One should recognize that constant width always implies completeness, but not the other way around \cite{Egg}. Minkowski spaces, in which all complete sets are of constant width are called \cemph{perfect}. Characterizing such spaces is still a major open question in convex geometry \cite{Egg,MoSch}.

The \cemph{circumradius} and the \cemph{inradius} of $K \in \K^n$ w.r.t.~$C \in \K^n$ are defined as
\begin{equation*}
    R(K,C):=\inf\{\rho>0: K \subset_t \rho C\} \quad \text{and} \quad r(K,C) := \sup\{\rho > 0 : \rho C \subset_t  K\},
\end{equation*}
where we write $K \subset_t C$, if there exists $t \in \R^n$, such that $K \subset C + t $. 
Whenever $C$ is symmetric, we have $D(K,C) = 2\max_{x,y \in K} R(\{ x,y \}, C) = \max_{x,y \in K} \|x-y\|_C$
and 
\[
\frac{w(K,C)}{2} \leq \frac{s(K)+1}{2} r(K,C) \le \frac{r(K,C) + R(K,C)}{2}  \le \frac{s(K)+1}{2s(K)} R(K,C) \leq \frac{D(K,C)}{2}
\]
(see \cite{BrKo}).

While $w(K,C)=D(K,C)$ characterizes constant width of $K$ w.r.t.~$C$, we know from \cite{BrG2} that all sets $K$, which are complete w.r.t.~$C\in \mathcal K^n_0$ fulfill the following chain of equalities
\[
\frac{s(K)+1}{2} r(K,C) =\frac{r(K,C) + R(K,C)}{2}  = \frac{s(K)+1}{2s(K)} R(K,C) = \frac{D(K,C)}{2}.
\]
However, this property does not characterize completeness, not even in the planar case. For instance, the sliced Reuleaux triangle and the hood, as defined in \cite{BrG} are pseudo-complete w.r.t.~the euclidean ball, but not of constant width. 

We say that $K$ is \cemph{pseudo-complete} w.r.t.~$C \in \mathcal K^n_0$ if $r(K,C)+R(K,C)=D(K,C)$, and denote by $\K^n_{ps,C}$ or $\K^n_{comp,C}$ the families of all $K \in \K^n$, which are pseudo-complete or complete w.r.t.~$C \in \mathcal K^n_0$, respectively.

\vspace{0.3cm}
\begin{figure}[ht]
\begin{tikzpicture}[scale=2.0]
\draw \boundellipse{4.4,2}{-2.6}{1};
\draw \boundellipse{3.8,2}{-1.7}{0.75};
\draw \boundellipse{3.2,2}{-0.9}{0.5};
\draw[thick, dred]  (3.2,2) node {\textbf{constant width}};
\draw[thick, dgreen]  (4.8,2) node {\textbf{completeness}};
\draw[thick, dblue]  (6.2,2.1) node {\textbf{pseudo-}};
\draw[thick, dblue]  (6.2,1.9) node {\textbf{completeness}};
\end{tikzpicture}
\end{figure}

Recently, it has been shown in \cite{Ri} that the diameter-width-ratio for complete sets $K \in \K^n$ w.r.t.~$C\in\mathcal K^n_0$ is bounded from above by $\frac{n+1}{2}$. We sharpen this result for pseudo-complete (and therefore, for complete) sets taking the asymmetry $s(K)$ of $K$ into account.
\begin{thm}\label{thm:results_comp}
Let $K \in \K^n_{ps,C}$. Then 
\[
\frac{D(K,C)}{w(K,C)} \leq \frac{s(K)+1}{2}.
\]
Moreover, for $n>2$ odd and any $s \in [1,n]$ 
or for $n>2$ even and any $s \in [1,n-1]$ there exists $K \in \K^n_{comp,C}$ with $s(K)=s$, such that $\frac{D(K,C)}{w(K,C)} = \frac{s+1}{2}$. 
\end{thm}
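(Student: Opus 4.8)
\section*{Proof proposal}

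The plan is to separate the inequality from its sharpness. For the inequality the key point is that pseudo-completeness collapses the chain from \cite{BrKo}. Write $s=s(K)$, $r=r(K,C)$, $R=R(K,C)$, $D=D(K,C)$, $w=w(K,C)$. The defining relation $r+R=D$ means $\frac{r+R}{2}=\frac{D}{2}$, so in
\[
\frac{w}{2}\le \frac{s+1}{2}\,r\le \frac{r+R}{2}\le \frac{s+1}{2s}\,R\le \frac{D}{2}
\]
the last three terms coincide. In particular $\frac{r+R}{2}=\frac{s+1}{2s}\,R$ forces $R=s\,r$, hence $D=r+R=(s+1)\,r$. Combining this with the elementary width bound $w\ge 2r$ — which holds because a translate of $rC$ lies in $K$ and $C$ is $0$-symmetric, so every breadth of $K$ is at least $b_u(rC,C)=2r$ — yields
\[
\frac{D}{w}\le \frac{(s+1)\,r}{2r}=\frac{s+1}{2}.
\]
I expect this direction to be short, its only inputs being the quoted chain and the inscribed-ball width bound.

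The substantive part is the sharpness. Any extremizer is complete, hence satisfies $D=(s+1)r$, and attaining $\frac{D}{w}=\frac{s+1}{2}$ is equivalent to $w=2r$: the minimal-width direction must be one in which $K$ is exactly as flat as its inball permits. The plan is therefore to exhibit, for each admissible $s$, a $0$-symmetric gauge $C$ and a body $K$ complete w.r.t.\ $C$ with $s(K)=s$ and $w(K,C)=2r(K,C)$. The two endpoints are transparent: for $s=1$ one takes $K=C$ a ball, where $D=w=2r$ and $R=r$; for the top of the range one takes a regular simplex $T$ together with a symmetric gauge adapted to it (e.g.\ $C=\tfrac12(T-T)$), for which $s(T)=n$, $D=2$, and completeness holds, the equality $w=2r$ being precisely the content to be verified.

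To fill the intermediate range I would build a one–parameter family interpolating between these endpoints — passing through bodies that are simplicial in some directions and ball-like in the complementary ones — so that the asymmetry sweeps continuously through the interval while completeness and the flatness relation $w=2r$ are preserved, and then invoke the intermediate value theorem to realize each target $s$. The main obstacle, and the source of the even/odd split, is exactly the verification that $w=2r$ survives up to the top of the range. Here I expect the parity to enter through $\alpha(\text{simplex})$: when $n$ is odd, $\alpha(S)=1$ should make the symmetric hull and the central section of the simplex-type extremizer align so that $w=2r$ is still attainable at $s=n$, whereas $\alpha(S)=\frac{n}{n+1}<1$ for even $n$ should force $w>2r$ there, costing the equality and capping the construction at $s=n-1$ (realized, say, by a prism over an $(n-1)$-simplex). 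Checking completeness and computing $w$ and $r$ for this extremal family, and pinning down the parity threshold, is the step I anticipate to be the most technical.
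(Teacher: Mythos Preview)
Your inequality argument is correct and in fact more direct than the paper's. You use $D=(s+1)r$ (from the collapsed chain) together with the elementary bound $w\ge 2r$. The paper instead routes the same inequality through the containment $C\subset K\cap(-K)$ and the parameter $\tau(K)=R\bigl(K\cap(-K),\tfrac{K-K}{2}\bigr)$, obtaining the intermediate estimate
\[
\frac{D(K,C)}{w(K,C)}\le \frac{s+1}{2}\,\tau(K)\le \frac{s+1}{2}.
\]
This detour is not needed for Theorem~\ref{thm:results_comp} itself, but the paper reuses the factor $\tau(K)$ in the proof of Theorem~\ref{thm:results_pscomp}, so the extra work pays off later.

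The sharpness part of your plan has a genuine gap. Your proposed endpoint gauge $C=\tfrac12(T-T)$ makes the simplex $T$ of \emph{constant width} with respect to $C$ (since $T-T=D(T,C)\,C$ trivially), hence $\frac{D}{w}=1$, not $\frac{n+1}{2}$; the flatness condition $w=2r$ fails there for every $n\ge 2$. The paper's construction is quite different and entirely explicit: for odd $n$ one fixes the gauge $C=S\cap(-S)$ and takes $K=S\cap(-sS)$ for each $s\in[1,n]$, so no interpolation or intermediate value argument is needed. Completeness of $K$ w.r.t.\ $C$ is checked via the regular-supporting-slab criterion (Proposition~\ref{prop:regular-slabs+chords}), and equality in the bound is obtained by showing $\tau(K)=1$, which in turn relies on $S\cap(-S)\subset^{opt}\tfrac{S-S}{2}$ for odd $n$ --- exactly the place where $\alpha(S)=1$ enters, confirming your parity intuition. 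Your prism idea for even $n$ is correct and matches the paper: one takes $K'=K\times[-1,1]$ and $C'=C\times\bigl[-\tfrac{2}{s+1},\tfrac{2}{s+1}\bigr]$ with $K,C$ the odd-dimensional construction one dimension down.
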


While the above theorem sharpens the result in \cite{Ri}, it does not improve it in its absolute  bound of $\frac{n+1}{2}$. Doing this is indeed not possible in odd dimensions. We use a stability result near the simplex 
to show that for even $n$ the absolute bound can be improved to $\frac{n+1}{2} -\frac{1}{2n^4} + \mathcal{O}\left(\frac{1}{n^5}\right)$. 

\begin{thm}\label{thm:Improved_richter}
Let $K\in\mathcal K_{ps,C}^n$ and $n$ even. Then
\[
\frac{D(K,C)}{w(K,C)} \leq \frac{s_0+1}{2},
\]
where 
\[
s_0=\frac{n^4+n^3+2n^2+\sqrt{n^8+6n^7+17n^6+28n^5+28n^4+12n^3-4n^2-12n-4}}{2(n^3+2n^2+3n+1)}.
\]

\end{thm}

Euclidean spaces of any dimension as well as general planar Minkowski spaces are perfect. Thus, obviously, in all those spaces the diameter-width-ratio of complete sets is equal to one. However, as an application of Theorem \ref{thm:PlanarCase}, we are able to state better bounds than the ones given in Theorem \ref{thm:results_comp} and \ref{thm:Improved_richter} for pseudo-complete sets in the planar case. 

\begin{thm}\label{thm:results_pscomp}
Let $K \in \K^2_{ps,C}$. Then 
\[
\frac{D(K,C)}{w(K,C)} \leq \min \left\{\frac{s(K)+1}2, \frac{s(K)^2}{s(K)^2-1} \right\}.
\]
\end{thm}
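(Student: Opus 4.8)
The plan is to reduce the ratio to the difference body $K-K$ and then feed in the planar estimate for $\alpha(K)$ from Theorem~\ref{thm:PlanarCase}. Since $C$ is $0$-symmetric, the identities recorded before Theorem~\ref{thm:results_comp} give $D(K,C)=\max_{x,y\in K}\|x-y\|_C=R(K-K,C)$, and dually $w(K,C)=r(K-K,C)$, so that $\frac{D(K,C)}{w(K,C)}=\frac{R(K-K,C)}{r(K-K,C)}$. Next I would extract the consequences of pseudo-completeness: substituting $r(K,C)+R(K,C)=D(K,C)$ into the inequality chain displayed before Theorem~\ref{thm:results_comp} forces the two middle inequalities to equalities, whence $R(K,C)=s(K)\,r(K,C)$ and $D(K,C)=(s(K)+1)\,r(K,C)$. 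Combined with $2\,r(K,C)\,C\subseteq K-K$ — a translate of $r(K,C)C$ lies in $K$ and $C-C=2C$ — this yields $w(K,C)\ge 2\,r(K,C)$ and hence $\frac{D}{w}\le\frac{s(K)+1}{2}$, i.e.\ the planar instance of Theorem~\ref{thm:results_comp}. It therefore remains to prove $\frac{D}{w}\le\frac{s(K)^2}{s(K)^2-1}$.

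For this the plan is to establish the relation $\frac{D(K,C)}{w(K,C)}\le s(K)\,\alpha(K)$; then Theorem~\ref{thm:PlanarCase}, in the form $\alpha(K)\le\frac{s(K)}{s(K)^2-1}$ that is binding once $s(K)\ge\varphi$, gives $s(K)\alpha(K)\le\frac{s(K)^2}{s(K)^2-1}$, and intersecting this with the bound from the previous paragraph produces the asserted minimum. This target passes the natural sanity checks: at $s(K)=1$ it reduces to $\frac{D}{w}\le 1$, consistent with $R(K,C)=r(K,C)$ and constant width, and it is tight along the curve $\alpha=\frac{s}{s^2-1}$, realized (per Theorem~\ref{thm:PlanarCase}) by the triangle at $s=2$, where $s\alpha=\tfrac43=\frac{s^2}{s^2-1}$.

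To prove $\frac{D}{w}\le s(K)\,\alpha(K)$ I would sandwich the two radii of $K-K$ between the symmetrizations $K\cap(-K)$ and $\conv(K\cup(-K))$: from $2\,(K\cap(-K))\subseteq K-K$ one bounds $w=r(K-K,C)$ from below by the inner body, while from $K-K\subseteq 2\,\conv(K\cup(-K))$ — sharpened to $(1+\tfrac1{s(K)})\conv(K\cup(-K))\subseteq K-K$ by writing $h_{K-K}=\max\{h_K(\cdot),h_K(-\cdot)\}+\min\{h_K(\cdot),h_K(-\cdot)\}$ and using $\min\ge\tfrac1{s(K)}\max$ — one bounds $D=R(K-K,C)$ from above by the outer body, with the optimal covering factor $\alpha(K)$ converting one into the other. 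The main obstacle is that these inclusions are individually too lossy, recovering only the cruder bound $\frac{D}{w}\le s(K)$; the heart of the matter is thus to use pseudo-completeness to locate $C$ precisely between the two symmetrizations and to align the width-realizing direction with the contact configuration of $K\cap(-K)\subseteq\alpha(K)\conv(K\cup(-K))$. Since the regime in which the new bound binds is $s(K)\ge\varphi$, I expect to control these extremal directions through the exact six-point crossing structure of $\bd(K)\cap\bd(-K)$ supplied by Theorem~\ref{thm:crossings}, which dictates where $K-K$ is narrowest and widest and thereby removes the slack, leaving the clean factor $s(K)\alpha(K)$.
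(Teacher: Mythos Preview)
Your target inequality $\frac{D}{w}\le s(K)\,\alpha(K)$ is exactly the right one, and you have in fact already written down every ingredient needed to prove it --- but you then abandon them for an unnecessary detour through Theorem~\ref{thm:crossings}. The missing link is the quantity $\tau(K)=R\!\left(K\cap(-K),\tfrac{K-K}{2}\right)$.

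Concretely: pseudo-completeness (Proposition~\ref{prop:pseudo-complete-charact}) gives $\tfrac{D(K,C)}{2}C\subset\tfrac{s+1}{2}K$ with $K$ Minkowski centered, hence $C\subset K\cap(-K)$ after normalizing $D(K,C)=s+1$. Therefore
\[
w(K,C)\ \ge\ w\!\left(K,\,K\cap(-K)\right)\ =\ 2\,r\!\left(\tfrac{K-K}{2},\,K\cap(-K)\right)\ =\ \frac{2}{\tau(K)},
\]
so $\frac{D}{w}\le \frac{s+1}{2}\,\tau(K)$. This is the chain \eqref{eq:Dw_chain} from the proof of Theorem~\ref{thm:results_comp}; you rederive the bound $\frac{s+1}{2}$ but discard the sharper intermediate form.

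Next, the inclusion you derive via support functions, $(1+\tfrac1s)\conv(K\cup(-K))\subset K-K$, is not a ``sharpening'' of $K-K\subset 2\conv(K\cup(-K))$ --- it is the reverse containment, and it is precisely what is needed. Chained with the defining inclusion $K\cap(-K)\subset\alpha(K)\conv(K\cup(-K))$ it reads
\[
K\cap(-K)\ \subset\ \alpha(K)\,\conv(K\cup(-K))\ \subset\ \alpha(K)\,\frac{2s}{s+1}\cdot\frac{K-K}{2},
\]
i.e.\ $\tau(K)\le\frac{2s}{s+1}\,\alpha(K)$. Multiplying gives $\frac{D}{w}\le\frac{s+1}{2}\cdot\frac{2s}{s+1}\,\alpha(K)=s\,\alpha(K)$, and Theorem~\ref{thm:PlanarCase} finishes exactly as you say. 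This is the paper's argument.

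Your attempt to bound $R(K-K,C)$ and $r(K-K,C)$ separately cannot work without reintroducing $\tau$: the information about $C$ coming from pseudo-completeness is the single containment $C\subset K\cap(-K)$, which controls $w$ but says nothing new about $D$ beyond $D=s+1$. The six-point structure of $\bd(K)\cap\bd(-K)$ is not used at all here --- it is already baked into the bound $\alpha(K)\le\frac{s}{s^2-1}$ of Theorem~\ref{thm:PlanarCase}.
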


We will also show that Theorem \ref{thm:results_pscomp} improves the absolute upper bound for the diameter-width-ratio of pseudo-complete planar sets from $\frac32$ (derived in Theorem \ref{thm:results_comp}) down to 
$\frac{1}{6} \left(4 + \sqrt[3]{19-3 \sqrt{33} } +\sqrt[3]{19+3 \sqrt{33} } \right)\approx 1.42$. 



Finally, we use the results on the 3-dimensional Blaschke-Santal\'o diagram w.r.t.~the circumradius, inradius, diameter, and width for convex bodies in the euclidean plane \cite{BrG}, to 
derive the optimal absolute upper bound for the diameter-width-ratio of pseudo-complete sets in that case.  
\begin{thm}\label{lem:dw_eucl}
Let $K \in \K^2_{ps,\B_2}$. Then 
\[
\frac{D(K,\B_2)}{w(K,\B_2)} \leq \frac12\left(1+\frac1r\right)\approx 1.135,
\]
where 
\[
r=\frac{\sqrt{t}}{2}-1+\sqrt{\frac{16}{\sqrt{t}}-t},\quad \text{and} \quad
t=\sqrt[3]{\frac{32}{9}} \left( \sqrt[3]{9+\sqrt{69}}+\sqrt[3]{9-\sqrt{69}} \right).
\]
Moreover, equality holds if $K$ is a hood (see Section 6 for details). 
\end{thm}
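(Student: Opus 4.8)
The plan is to recast the bound as an optimization over the three-dimensional Blaschke--Santal\'o diagram for $(r,R,D,w)$ in the euclidean plane established in \cite{BrG}; throughout write $r=r(K,\B_2)$, $R=R(K,\B_2)$, $D=D(K,\B_2)$ and $w=w(K,\B_2)$. By definition $K$ is pseudo-complete w.r.t.~$\B_2$ exactly when $D=r+R$, so the pseudo-complete bodies are precisely the realizations of the planar slice $\{D=r+R\}$ of that diagram. Since every admissible quadruple is realized by some convex body, maximizing $D/w$ over $\K^2_{ps,\B_2}$ is the same as maximizing $\frac{r+R}{w}$ over this slice. Normalizing $R=1$, so that $D=1+r$, reduces everything to the single parameter $r$; the classical inequalities $r\le R$ and (Jung) $R\le D/\sqrt3$ confine it to $r\in[\sqrt3-1,1]$.

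First I would record the elementary estimate: the incircle of radius $r$ has minimal breadth $2r$ and lies in $K$, whence $w\ge 2r$ and therefore $\frac{D}{w}\le\frac{r+R}{2r}=\tfrac12\bigl(1+\tfrac1r\bigr)$, with equality exactly when $w=2r$. The crux is that $w=2r$ cannot hold throughout the admissible range: at the lower endpoint $r=\sqrt3-1$ the slice is realized by the Reuleaux triangle (Jung's inequality being tight there), which is of constant width and hence has $D/w=1$, while at $r=1$ it is realized by the disk, again with $D/w=1$. Thus $D/w$ is unimodal along the slice, rising from $1$ at the Reuleaux triangle to a maximum and falling back to $1$ at the disk, and the maximum is attained at the smallest $r^\ast$ for which a pseudo-complete body with $R=1$ can still achieve $w=2r$ --- geometrically, where the incircle boundary $\{w=2r\}$ of the diagram meets the curved (constant-width side) boundary.

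Having localized the optimum, I would read off from \cite{BrG} the closed form of that curved boundary surface on the slice $D=1+r$, intersect it with $w=2r$, and solve the resulting equation for $r^\ast$. This is where the algebra concentrates: the condition is a polynomial in $r$, and its relevant real root is produced by a resolvent cubic followed by a square root (Ferrari/Cardano), yielding exactly the nested-radical expression stated for $r$ and the numerical value $\tfrac12(1+\tfrac1{r})\approx1.135$. By the unimodality above, this critical value is the global maximum of $D/w$ on the pseudo-complete slice, which proves the inequality.

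Finally I would construct the hood explicitly as the body sitting at $(r^\ast,1,1+r^\ast,2r^\ast)$, verify directly that it is pseudo-complete (i.e.\ $D=r+R$) and that it realizes $w=2r$ with circumradius $1$, so that equality holds. I expect the main obstacle to be the middle step: correctly identifying which of the several boundary surfaces of the diagram carries the maximizer and expressing $w$ along it in closed form, since the incircle condition $w=2r$ has to be made compatible with the circumradius--diameter constraints $R=1$, $D=1+r$; once that relation is isolated, deriving and solving the governing quartic is routine but heavy.
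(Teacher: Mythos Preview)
Your plan is essentially the paper's own proof: normalize $R=1$, use the elementary bound $w\ge 2r$ to control $D/w\le (1+r)/(2r)$ on one part of the range, invoke the explicit lower boundary for $w$ from \cite[Theorem~3.2]{BrG} on the other part, and identify the hood as the body sitting where these two constraints meet. The one genuine gap is your unimodality assertion: knowing only that $D/w=1$ at the two endpoints does not force a single interior maximum, nor does it pin that maximum to the junction point $r^\ast$. The paper closes this by checking monotonicity of each piece separately---$D/(2(D-1))$ is decreasing in $D$, so the $w\ge 2r$ bound is maximized at the left end $D(\HH)$ of its interval $[D(\HH),2]$, while the reciprocal of the \cite{BrG} expression is increasing on $[\sqrt3,D(\HH)]$, so that bound is maximized at the right end $D(\HH)$---and both equal $D(\HH)/w(\HH)$ there. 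You should plan to verify exactly this rather than assume it. A minor point: the second boundary you need is not the constant-width face $w=D$ but the surface given by \cite[Theorem~3.2]{BrG}.
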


\section{Definitions and Propositions}
For any $X,Y \subset\R^n$, $\rho \in \R$ let $X+Y =\{x+y:x\in X,y\in Y\}$ be the \cemph{Minkowski sum} of $X$, $Y$ and  $\rho X= \{ \rho x: x \in X\}$ the \cemph{$\rho$-dilatation} of $X$, and abbreviate $(-1)X$ by $-X$. 
For any $X\subset\R^n$ let $\conv(X), \pos(X)$, $\lin(X)$, and $\aff(X)$ denote the \cemph{convex hull}, the \cemph{positive hull}, the \cemph{linear hull}, and the \cemph{affine hull} of $X$, respectively. 
A \cemph{segment} is the convex hull of $\{x,y\} \subset \R^n$, which we abbreviate by $[x,y]$. With $u^1, \dots, u^n$ we denote the \cemph{standard unit vectors} of $\R^n$. For every $X \subset \R^n$ let $\bd(X)$ and $\inter(X)$ denote the \cemph{boundary} and \cemph{interior} of $X$, respectively. Let us denote the \cemph{Euclidean norm} of $x\in\R^n$ by $\|x\|$, the \cemph{Euclidean unit ball} by $\B_2=\{x \in\R^n : \|x\|\leq 1\}$. 
In case $u^1,\dots,u^{n+1}\in\R^n$ are affinely independent, we say that $\conv(\{u^1,\dots,u^{n+1}\})$ is an \cemph{$n$-simplex}.

We recall the characterization of the optimal containment under homothety in terms of the touching conditions \cite[Theorem 2.3]{BrKo}. 
\begin{proposition}\label{prop:Opt_Containment}
Let $K,C\in\mathcal K^n$ and $K\subset C$. Then the following are equivalent:
\begin{enumerate}[(i)]
\item $K\subset^{opt}C$.
\item There exist $k\in\{2,\dots,n+1\}$, $p^j\in K\cap \bd(C)$, $j=1,\dots,k$, and $a^j$ outer normals of supporting
halfspaces of $K$ and $C$ at $p^j$,
such that $0\in\conv(\{a^1,\dots,a^k\})$.
\end{enumerate}
\end{proposition}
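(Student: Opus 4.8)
The plan is to prove the two implications separately, working throughout after a harmless translation of both $K$ and $C$ by a common vector so that $0 \in \inter(C)$; this preserves both the relation $K \subset^{opt} C$ and the data in (ii) (the normals $a^j$ are unchanged, the points $p^j$ merely shift), while guaranteeing that the support function $h_A(u) := \max_{x \in A}\langle u,x \rangle$ satisfies $h_C(a) > 0$ for every $a \ne 0$.

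For the implication (ii) $\Rightarrow$ (i) I would argue by contradiction using support functions. Since each $a^j$ is an outer normal of supporting halfspaces of both $K$ and $C$ at the common point $p^j \in K \cap \bd(C)$, one has $h_K(a^j) = h_C(a^j) = \langle a^j, p^j\rangle =: c_j$, and $c_j > 0$ by the normalization above. If $K \subset \rho C + t$ held for some $\rho \in [0,1)$ and $t \in \R^n$, then evaluating support functions at $a^j$ gives $c_j = h_K(a^j) \le \rho\, h_C(a^j) + \langle t, a^j\rangle = \rho c_j + \langle t, a^j\rangle$, i.e.\ $(1-\rho) c_j \le \langle t, a^j\rangle$. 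Choosing $\lambda_j \ge 0$ with $\sum_j \lambda_j = 1$ and $\sum_j \lambda_j a^j = 0$ (which exist because $0 \in \conv\{a^1,\dots,a^k\}$), multiplying by $\lambda_j$ and summing yields $(1-\rho)\sum_j \lambda_j c_j \le \langle t, \sum_j \lambda_j a^j\rangle = 0$. As $1 - \rho > 0$ and $\sum_j \lambda_j c_j > 0$, this is impossible; hence no dilation factor $\rho < 1$ works and $K \subset^{opt} C$.

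For the converse (i) $\Rightarrow$ (ii), which I expect to be the main obstacle, I would recast optimal containment as an optimization statement. With $0 \in \inter(C)$ the circumradius can be written as $R(K,C) = \min_{t} f(t)$, where $f(t) := \max_{x \in K} \|x - t\|_C$ is convex in $t$; the hypothesis $K \subset^{opt} C$ says precisely that $f(0) = 1 = \min_t f(t)$, so $t = 0$ is a minimizer and $0 \in \partial f(0)$. The active set of the maximum is $A := \{x \in K : \|x\|_C = 1\} = K \cap \bd(C)$, and by the Danskin-type formula for the subdifferential of a pointwise maximum, $\partial f(0)$ is the convex hull of the subgradients $-\partial\|\cdot\|_C(x)$ over $x \in A$; the elements of $\partial\|\cdot\|_C(x)$ are exactly the outer normals $a$ of $C$ at $x$ normalized by $h_C(a) = 1$. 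The decisive geometric observation is that, because $K \subset C$ and $x \in K$, every outer normal of $C$ at such an $x$ automatically supports $K$ at $x$ as well, so these are genuine common outer normals as required in (ii). Thus $0 \in \partial f(0)$ becomes $0 \in \conv(N)$, where $N$ is the set of common outer normals of $K$ and $C$ at the points of $K \cap \bd(C)$. Applying Carath\'eodory's theorem extracts at most $n+1$ such normals whose convex hull still contains $0$, and since every normal is nonzero at least two are needed, giving the desired $k \in \{2,\dots,n+1\}$.

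I expect the delicate point to be the rigorous justification of this subdifferential computation, or equivalently a direct separation argument replacing it: if $0 \notin \conv(N)$ for the (closed, hence compact) set $N$ of common unit outer normals, strict separation furnishes a direction $v$ with $\langle a, v\rangle \ge \delta > 0$ for all $a \in N$; translating $K$ by $-\epsilon v$ then strictly decreases the $C$-gauge at every contact point, and a compactness argument shows $\max_{x\in K}\|x - \epsilon v\|_C < 1$ for small $\epsilon > 0$, so $K \subset \rho C + \epsilon v$ with $\rho < 1$, contradicting optimality. Securing the \emph{uniformity} here — ensuring the one-sided derivative of the max is strictly negative, not merely the individual contact derivatives — is what needs care; closedness of $N$ and the positivity $h_C(a) > 0$ arranged by the initial normalization are exactly what make this step go through.
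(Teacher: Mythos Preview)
The paper does not prove this proposition: it is quoted verbatim from \cite{BrKo} (see the sentence preceding the statement, ``We recall the characterization of the optimal containment under homothety in terms of the touching conditions \cite[Theorem 2.3]{BrKo}''), so there is no in-paper proof to compare against.

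Your argument is correct. The direction (ii)$\Rightarrow$(i) via support functions is clean and complete as written. For (i)$\Rightarrow$(ii) your optimization reformulation is sound: with $0\in\inter(C)$ one has $f(t)=\max_{x\in K}\|x-t\|_C$ convex, $f(0)=1=\min_t f(t)$, and the Danskin/Ioffe--Tikhomirov subdifferential formula for a compactly indexed maximum of convex functions gives exactly $\partial f(0)=\conv\bigcup_{x\in K\cap\bd(C)}(-\partial\|\cdot\|_C(x))$; the compactness of the index set and upper semicontinuity of the subdifferential map ensure no closure is needed. Your key observation that any outer normal of $C$ at a contact point $p\in K\cap\bd(C)$ is automatically an outer normal of $K$ there (since $K\subset C$) is what makes the extracted normals ``common'' as required. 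Carath\'eodory then finishes.

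Your alternative separation argument is also valid; the uniformity you flag is handled precisely by Danskin's theorem for one-sided directional derivatives (which only needs continuity of $(x,t)\mapsto\|x-t\|_C$ and compactness of $K$, not smoothness): $f'(0;\epsilon v)=\max_{x\in A}\,(\|\cdot\|_C)'(x;\epsilon v)\le -\epsilon\delta<0$, so $f(\epsilon v)<1$ for small $\epsilon>0$. Either route is standard convex analysis and would be accepted.
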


In the planar case Proposition \ref{prop:Opt_Containment} 
implies the following corollary (c.f.~\cite[Prop. 2.5]{BDG}).  

\begin{cor}\label{cor:zero-inside}
Let $K \in \K^2$ with $s(K) > 1$. Then $K$ is Minkowski centered if and only if there exist $p^1,p^2,p^3\in\bd(K)\cap(-\frac{1}{s(K)} K)$ and $a^i$, $i=1,2,3$, outer normals of supporting halfspaces of $K$ in $p^i$,
such that $0 \in \inter(\conv(\{p^1,p^2,p^3\}))$ and $0\in\conv(\{a^1,a^2,a^3\})$.
\end{cor}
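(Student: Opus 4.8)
The plan is to derive Corollary \ref{cor:zero-inside} from Proposition \ref{prop:Opt_Containment} by applying the latter to the containment $K \subset^{opt} C$ with $C := s(K)(-K)$, which is exactly the defining optimal containment for a Minkowski centered body. First I would translate the Minkowski-center condition: $0$ is a Minkowski center means $K \subset s(K)(-K)$, and by the definition of the asymmetry as an infimum this containment is optimal, i.e. $K \subset^{opt} s(K)(-K)$. Setting $\rho := \frac{1}{s(K)}$, the touching points $p^j \in K \cap \bd(s(K)(-K))$ are precisely the points of $K$ lying on the boundary of $s(K)(-K)$, equivalently $p^j \in \bd(K) \cap (-\frac{1}{s(K)}K)$ after rescaling, since $x \in \bd(s(K)(-K))$ iff $-\frac{1}{s(K)}x \in \bd(K)$ and the $p^j$ already lie in $K$. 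This identifies the touching set in the Corollary with the one in the Proposition, and the outer normals $a^j$ carry over directly because at a common boundary point the supporting halfspaces of $K$ and of $s(K)(-K)$ coincide.

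Next I would fix the number of touching points. Proposition \ref{prop:Opt_Containment} gives $k \in \{2,\dots,n+1\}$, so in the plane ($n=2$) we have $k \in \{2,3\}$ with $0 \in \conv(\{a^1,\dots,a^k\})$. The task is to upgrade this to exactly three points whose convex hull contains $0$ in its \emph{interior} and to get the normal condition $0 \in \conv(\{a^1,a^2,a^3\})$. I would rule out $k=2$ under the hypothesis $s(K) > 1$: if only two touching points $p^1,p^2$ sufficed with antipodal normals $a^1 = -\lambda a^2$, then $K$ would be squeezed between two parallel supporting lines in such a way that forces a symmetry making $s(K)=1$, contradicting $s(K)>1$. (More carefully, $k=2$ with $0 \in \conv(\{a^1,a^2\})$ forces the two supporting lines to be parallel and the width direction to realize the asymmetry trivially; a short argument shows this is incompatible with strict asymmetry, or one simply pads the two-point configuration to a three-point one.) So we may take $k=3$, and relabel the three points and normals accordingly.

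The remaining content is the two interiority/position claims. For $0 \in \conv(\{a^1,a^2,a^3\})$: with $k=3$ the Proposition already gives $0 \in \conv(\{a^1,a^2,a^3\})$, so this is immediate once we are in the three-point case. For $0 \in \inter(\conv(\{p^1,p^2,p^3\}))$: I would argue that if $0$ were on the boundary of (or outside) the triangle $\conv(\{p^1,p^2,p^3\})$, then all three points $p^j$ would lie in a closed halfplane bounded by a line through $0$, whence their outer normals $a^j$ would all have nonnegative inner product with some fixed direction, making $0 \notin \conv(\{a^1,a^2,a^3\})$ (a separation argument using the duality between the positions of the $p^j$ relative to $0$ and the cone spanned by the $a^j$). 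This contradiction with the normal condition forces $0$ strictly inside the triangle. The converse direction of the ``if and only if'' is the easier one: given such $p^i$ and $a^i$, Proposition \ref{prop:Opt_Containment} yields $K \subset^{opt} s(K)(-K)$, hence $0$ is a Minkowski center.

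I expect the main obstacle to be the clean elimination of the degenerate two-point case and the careful bookkeeping that the touching points of $K$ in $\bd(s(K)(-K))$ are exactly the points of $\bd(K) \cap (-\frac{1}{s(K)}K)$ with matching normals; the interiority argument, while conceptually a standard planar separation/duality statement, needs the hypothesis $s(K)>1$ to exclude the symmetric edge case where the configuration collapses. Since the excerpt flags this as a known corollary (c.f.~\cite[Prop. 2.5]{BDG}), I would keep these steps brief and lean on Proposition \ref{prop:Opt_Containment} as the engine.
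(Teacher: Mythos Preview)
The paper does not give its own proof of this corollary; it simply records that the planar case of Proposition \ref{prop:Opt_Containment} yields it and refers to \cite[Prop.~2.5]{BDG}. Your overall strategy---feed the defining optimal containment into Proposition \ref{prop:Opt_Containment} and then rule out $k=2$ using $s(K)>1$---is therefore exactly the intended route, and your width argument excluding $k=2$ is correct. One bookkeeping point: it is cleaner to apply Proposition \ref{prop:Opt_Containment} to the containment $-\tfrac{1}{s(K)}K\subset^{opt}K$ rather than to $K\subset^{opt}s(K)(-K)$. The touching points $p^j\in(-\tfrac{1}{s(K)}K)\cap\bd(K)$ are then literally the points in the statement, with no rescaling needed. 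Your ``after rescaling'' passage conflates $p\in\bd(s(K)(-K))$ (which says $-\tfrac{1}{s(K)}p\in\bd(K)$) with $p\in-\tfrac{1}{s(K)}K$ (which says $-s(K)p\in K$); these are not the same condition, and while the map $p\mapsto-\tfrac{1}{s(K)}p$ does carry one configuration to the other (with normals negated), the sentence as written is incorrect.

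More seriously, your argument for $0\in\inter(\conv(\{p^1,p^2,p^3\}))$ has a genuine gap. The asserted implication ``if all $p^j$ lie in a closed halfplane through $0$ then their outer normals lie in a common halfplane'' is \emph{false} for general planar convex bodies. For instance, take $K$ to be a triangle with one short edge far from $0$ and the opposite vertex on the other side of $0$; choosing one boundary point on each edge, all on the short-edge side of $0$, gives three normals that positively span $\R^2$ while the three points sit in a single halfplane. So the separation/duality you invoke does not hold without further input. What makes the corollary true is the additional structure specific to the Minkowski-asymmetry setting, namely that each $p^j$ also satisfies $-s(K)p^j\in\bd(K)$ with outer normal $-a^j$ there; this antipodal pairing is what forces the $p^j$ to surround $0$. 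You should either invoke \cite[Prop.~2.5]{BDG} for this step, as the paper does, or supply an argument that actually uses $-s(K)p^j\in\bd(K)$.
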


For $K \in \K^n$ Minkowski centered we call any $p \in \bd(K) \cap \bd\left(-\frac{1}{s(K)}K\right)$ an \cemph{asymmetry point} of $K$, and any triple of asymmetry points, with the properties as stated in Corollary \ref{cor:zero-inside}, to be \cemph{well-spread}.

For any $a\in\R^n \setminus \{0\}$
and $\rho\in\R$, let $H^{\le}_{a,\rho} = \{x\in\R^n: a^Tx \leq \rho\}$ denote a \cemph{halfspace} with its boundary  being the \cemph{hyperplane} $H_{a,\rho}^==\{x\in\R^n: a^Tx = \rho\}$. 
Analogously, we define $H_{a,\rho}^\ge, H_{a,\rho}^<,H_{a,\rho}^>$. 
We say that the halfspace $H^{\le}_{a,\rho}$ \cemph{supports} $K \in\K^n$ in $q \in K$, if $K \subset H^{\le}_{a,\rho}$ and $q \in H_{a,\rho}^=$. We denote the set of all \cemph{extreme points} of $K$ by $\ext(K)$. 

Note that $ \alpha(K)=R\left(K  \cap (-K), \conv(K \cup (-K))\right)$, and we also define
\begin{equation*}
 \tau(K):=R\left(K  \cap (-K), \frac{K-K}{2}\right).
\end{equation*}

The following proposition combines \cite[Corollary 2.3]{BDG1} and \cite[Theorem 1.3]{BDG}. 

\begin{proposition}\label{prop:old_results}
Let $K \in\K^n$ be Minkowski centered and let $L$ be a regular linear transformation. Then the following are true: 
\begin{enumerate}[(i)]
    \item $\alpha(K)=\alpha(L(K))$ and $\tau(K)=\tau(L(K))$. 
    \item $\alpha(K)=\tau(K)=1$ if and only if there exist $p, -p \in \bd(K)$ and parallel halfspaces $H^{\le}_{a,\rho}$ and $H^{\le}_{-a,\rho}$ supporting $K$ in $p$ and $-p$, respectively.
\end{enumerate}
\end{proposition}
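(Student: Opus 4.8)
The plan is to handle the two assertions separately and, for (ii), to give a direct argument resting on the optimal-containment criterion of Proposition~\ref{prop:Opt_Containment}. For (i), the point is that a regular linear map $L$ commutes with every set operation entering the definitions: $L(-K)=-L(K)$ by linearity, $L(K\cap(-K))=L(K)\cap(-L(K))$ by injectivity, $L(\conv(K\cup(-K)))=\conv(L(K)\cup(-L(K)))$ because $L$ preserves unions and convex hulls, and $L\!\left(\frac{K-K}{2}\right)=\frac{L(K)-L(K)}{2}$ because it preserves Minkowski sums and dilations. Together with the fact that the circumradius is unchanged under applying the same $L$ to both arguments — which follows since $A\subseteq\rho C+t\iff L(A)\subseteq\rho L(C)+L(t)$ and $L$ maps $\R^n$ bijectively onto itself, so the feasible sets of $\rho$ coincide — this turns the identities $\alpha(K)=R(K\cap(-K),\conv(K\cup(-K)))$ and $\tau(K)=R(K\cap(-K),\frac{K-K}{2})$ directly into $\alpha(L(K))=\alpha(K)$ and $\tau(L(K))=\tau(K)$.

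The preliminary step for (ii) is the inclusion chain $K\cap(-K)\subseteq\frac{K-K}{2}\subseteq\conv(K\cup(-K))$: the left inclusion holds since any $x\in K\cap(-K)$ satisfies $x=\frac{1}{2}\bigl(x-(-x)\bigr)$ with $x,-x\in K$, and the right one since $\frac{1}{2}x+\frac{1}{2}(-y)$ is a convex combination of a point of $K$ and a point of $-K$. As a larger container can only decrease the circumradius, this chain forces $\alpha(K)\le\tau(K)\le1$. Consequently $\alpha(K)=\tau(K)=1$ is equivalent to the single equation $\alpha(K)=1$, that is, to $K\cap(-K)\subset^{opt}\conv(K\cup(-K))$, and it suffices to match this optimal containment to the antipodal supporting pair.

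For the direction from the supporting pair to the containment, suppose $p,-p\in\bd(K)$ admit parallel supporting halfspaces $H^{\le}_{a,\rho}$ at $p$ and $H^{\le}_{-a,\rho}$ at $-p$. Then $K\subseteq H^{\le}_{a,\rho}$, and reflecting the support at $-p$ gives $-K\subseteq H^{\le}_{a,\rho}$, whence $\conv(K\cup(-K))\subseteq H^{\le}_{a,\rho}$; since $p\in K\cap(-K)$ with $a^Tp=\rho$, the vector $a$ is an outer normal of a supporting halfspace at $p$ of both $K\cap(-K)$ and $\conv(K\cup(-K))$, and symmetrically $-a$ works at $-p$. As $0\in\conv\{a,-a\}$, Proposition~\ref{prop:Opt_Containment} with $k=2$ yields $\alpha(K)=1$. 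For the reverse implication, if $\alpha(K)=1$ then Proposition~\ref{prop:Opt_Containment} provides touching points; choosing any one, say $p\in(K\cap(-K))\cap\bd(\conv(K\cup(-K)))$ with outer normal $a$ and $\rho:=a^Tp$, the inclusions $K,-K\subseteq\conv(K\cup(-K))\subseteq H^{\le}_{a,\rho}$ show that $H^{\le}_{a,\rho}$ supports $K$ at $p$, while $-p\in K$ with $a^T(-p)=-\rho$ shows that $H^{\le}_{-a,\rho}$ supports $K$ at $-p$, giving the required pair. The step needing the most care is this last one: verifying that a supporting halfspace of the symmetrization $\conv(K\cup(-K))$ descends to a supporting halfspace of $K$ itself, that its reflection supports $K$ at the antipode, and that a single touching point already determines the whole antipodal pair.
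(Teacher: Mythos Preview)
The paper does not actually prove this proposition: it is stated there as a combination of results quoted from \cite{BDG1} and \cite{BDG}, so there is no ``paper's own proof'' to compare against. That said, your argument is a correct self-contained derivation. Part (i) is the routine observation that a regular linear map commutes with all the set operations defining $\alpha$ and $\tau$ and leaves the circumradius functional invariant when applied to both arguments. For part (ii), your reduction via the chain $K\cap(-K)\subset\frac{K-K}{2}\subset\conv(K\cup(-K))$ to the single condition $\alpha(K)=1$ is clean, and the two implications are handled correctly: in the forward direction you verify the hypotheses of Proposition~\ref{prop:Opt_Containment} with $k=2$ at $\pm p$, and in the reverse direction you exploit that a supporting halfspace of $\conv(K\cup(-K))$ at a common boundary point $p$ automatically supports $K$ at $p$ and, by symmetry, $H^{\le}_{-a,\rho}$ supports $K$ at $-p$. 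The one point you flag as delicate --- that $p$ lies in $\bd(K)$ rather than merely in $K$ --- follows at once from $K\subset H^{\le}_{a,\rho}$ and $a^Tp=\rho$, since no interior point of $K$ can sit on the bounding hyperplane of a containing halfspace; you might state this explicitly. Note also that for the reverse direction you do not really need the full conclusion of Proposition~\ref{prop:Opt_Containment}: it suffices that $\alpha(K)=1$ forces the compact set $K\cap(-K)$ to meet $\bd(\conv(K\cup(-K)))$, which is immediate since otherwise $K\cap(-K)$ would lie in the interior and hence in a proper dilate.
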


We recall another result from \cite{BDG} based on Proposition \ref{prop:old_results} 
about the equality cases for the upper bound in the inequality $\alpha(K) \le 1$ in the planar case.

\begin{proposition}\label{prop:GoldenHouse}
Let $K\in\mathcal K^2$ be Minkowski centered such that $\alpha(K)=\tau(K)=1$. Then $s(K)\leq \varphi$. Moreover, if $s(K)=\varphi$, there exists a linear transformation $L$ such that $L(K)=\mathbb{GH}$, where
\[
\mathbb{GH}:=\conv\left(\left\{\begin{pmatrix} \pm 1 \\ 0\end{pmatrix}, \begin{pmatrix} \pm 1 \\ -1\end{pmatrix}, \begin{pmatrix} 0 \\ \varphi \end{pmatrix} \right\} \right) 
\]
is the \cemph{golden house}.
\end{proposition}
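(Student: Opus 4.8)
The strategy is to dispose of the inequality $s(K)\le\varphi$ by citing what is already available and to concentrate the real work on the rigidity claim that equality forces $K$ to be the golden house. Since $\alpha(K)=\tau(K)=1$ in particular gives $\alpha(K)=1$, the bound $s(K)\le\varphi$ is exactly the planar implication recorded in \cite{BDG1}, so the first assertion needs no new argument. For the normalization I would invoke Proposition \ref{prop:old_results}: part (i) lets me replace $K$ by $L(K)$ for any regular linear $L$ without changing $\alpha$, $\tau$, or $s$, while part (ii) turns the hypothesis $\alpha(K)=\tau(K)=1$ into the existence of antipodal boundary points $p,-p\in\bd(K)$ carrying parallel supporting lines. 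After a linear map I may assume $p=u^1$, $-p=-u^1$, and that these supporting lines are the verticals $\{x_1=\pm1\}$; thus $K\subset[-1,1]\times\R$, and since $(\pm1,0)\in\bd(K)$ the horizontal cross–section of $K$ at height $0$ is the full segment $[-1,1]\times\{0\}$. In particular the $x_1$–extent of $K$ is symmetric about $0$, so all of the asymmetry must be expressed in the transverse direction.

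For the equality case assume $s(K)=\varphi$. As $\varphi>1$, Corollary \ref{cor:zero-inside} supplies three well–spread asymmetry points $p^1,p^2,p^3\in\bd(K)\cap\bd(-\tfrac1\varphi K)$ together with outer normals $a^1,a^2,a^3$ satisfying $0\in\inter(\conv\{p^1,p^2,p^3\})$ and $0\in\conv\{a^1,a^2,a^3\}$. The inclusion $K\subset-\varphi K$ yields the pointwise relation $w\in K\Rightarrow -w/\varphi\in K$, and at each asymmetry point this is doubly tight: both $p^i$ and $-\varphi p^i$ lie on $\bd(K)$. Writing the upper and lower boundaries of $K$ over $[-1,1]$ as a concave function $U$ and a convex function $V$, reflecting the highest point through this relation gives $\min V\le -\tfrac1\varphi\max U$, i.e.\ the apex height $h:=\max U$ and the depth $g:=-\min V$ satisfy $g\ge h/\varphi$; reflecting the lowest point symmetrically gives $g\le\varphi h$.

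The heart of the matter is to upgrade these to equalities and to locate the active edges. At $s(K)=\varphi$ the body sits at the extreme point of the region $\{\alpha=1\}$, so every inequality entering the proof of $s\le\varphi$ must be tight; transporting these tightness conditions through the reflection relation forces the bottom corner $(1,V(1))$ to reflect exactly onto the left roof chord from $(-1,U(-1))$ to the apex, while the apex reflects exactly onto the bottom. Encoding the first as $-V(1)=h(\varphi-1)$ and the second as $-V(1)=h/\varphi$, the golden identity $\varphi-1=1/\varphi$ (equivalently $\varphi^2=\varphi+1$) makes the two compatible only when the roof is the straight chord and the bottom the flat segment at height $-h/\varphi$. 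Hence $\bd(K)$ consists of exactly five edges: the two vertical walls on $\{x_1=\pm1\}$, the flat bottom, and the two roof edges meeting at the apex $(0,h)$. A final linear map scaling the $x_2$–axis so that $h=\varphi$ (and fixing the orientation so the apex points up) keeps $\pm u^1$, the verticality of the walls, and $s$ unchanged, and identifies $K$ with $\conv\{(\pm1,0),(\pm1,-1),(0,\varphi)\}=\mathbb{GH}$. Composing all the linear maps used then gives the required $L$ with $L(K)=\mathbb{GH}$.

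I expect the rigidity step to be the main obstacle. Establishing the two inequalities $g\ge h/\varphi$ and $g\le\varphi h$ is routine, but turning the extremality $s=\varphi$ into the precise statement that the roof is a single chord and the bottom a single segment — ruling out extra vertices, curved arcs, or the apex sitting off the axis — requires carefully matching the outer normals $a^1,a^2,a^3$ from Corollary \ref{cor:zero-inside} against the horizontal normals $\pm u^1$ of the antipodal pair and checking that the resulting supporting lines cut out neither more nor less than the pentagon $\mathbb{GH}$. Handling every admissible placement of the three asymmetry points relative to $\pm u^1$, and confirming that only the symmetric golden–house configuration survives at $s=\varphi$, is where the genuine case analysis lies.
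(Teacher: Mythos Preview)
The paper does not prove this proposition; it is quoted from \cite{BDG} (see the sentence preceding the statement: ``We recall another result from \cite{BDG}\ldots''), so there is no in-paper argument to compare against. Your first assertion, $s(K)\le\varphi$, is indeed just a citation and needs nothing further.

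Your rigidity argument, however, has a genuine gap at its central step. You derive the two conditions $-V(1)=h(\varphi-1)$ and $-V(1)=h/\varphi$ and then claim that ``the golden identity $\varphi-1=1/\varphi$ makes the two compatible only when the roof is the straight chord and the bottom the flat segment''. But precisely because $\varphi-1=1/\varphi$ is an identity, those two equations are one and the same constraint; their simultaneous validity imposes nothing beyond a single linear relation between $V(1)$ and $h$, and certainly does not force the roof to be affine or the bottom to be flat. Relatedly, the inequalities $h/\varphi\le g\le\varphi h$ you obtain from the reflection $w\mapsto -w/\varphi$ leave the ratio $h/g$ free in $[1/\varphi,\varphi]$; you never pin it down to $\varphi$.

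The appeal to extremality --- ``every inequality entering the proof of $s\le\varphi$ must be tight'' --- is where the actual content would have to live, and you say so yourself in the last paragraph. As written this is not an argument but a promissory note: you have not identified which inequalities those are, nor shown that their simultaneous tightness forces (i) the apex to lie on the axis $x_1=0$, (ii) the top to consist of exactly two straight edges, and (iii) the bottom to be a single horizontal segment. Each of these needs to be extracted from the equality analysis of the actual proof in \cite{BDG}, and the placement of the three well-spread asymmetry points relative to $\pm u^1$ has to be resolved rather than merely flagged as ``where the genuine case analysis lies''. Until that is carried out, the equality case remains unproved.
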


\section{Geometry of the boundaries of sets} 
In 
this section we give the proof of Theorem \ref{thm:crossings}, describing the number of the intersection points of $\bd(K)$ and $\bd(-K)$ for a Minkowski centered $K \in\K^2$ with $s(K) > \varphi$. 
In order to do so, we provide the necessary definitions and show a lemma, which describes the locations of the asymmetry points from Corollary \ref{cor:zero-inside}. 
After this, we focus on the geometry of the touching points of $K\cap (-K)$ and $\alpha(K) \, \conv(K \cup (-K))$, which 
is needed for the proof of the main theorem.

For $K \in \K^2$ we call $z^1, z^2 \in \bd(K) \cap \bd(-K)$ \cemph{consecutive}, if there exists no point $z \in \bd(K) \cap \bd(-K) \cap \inter (\pos \{z^1, z^2\})$.

Even so we assume $s(K)>\varphi$ in this section, the arguments  keep valid as long as there is just a finite amount of 
points in $\bd(K) \cap \bd(-K)$.
In case of an \emph{infinite} sequence of points $\{z^i\}_{i\in\mathbb N}\subset\bd(K)\cap\bd(-K)$, there exists at least a subsequence of it, converging to a common boundary point $z^0$, such that  $K$ and $-K$ are commonly supportable in $z^0$. However, the latter would anyway imply $\alpha(K)=1$ by Proposition \ref{prop:old_results}, and thus by Proposition \ref{prop:GoldenHouse} is not possible for $s(K)>\varphi$.

\begin{lemma}\label{lem:crossings}
Let $K \in\K^2$ be Minkowski centered with $s(K) > \varphi$, let $z^1, z^2 \in \bd(K) \cap \bd(-K)$ be consecutive, and $H^\le_{a^1,\rho_1}$, $H^\le_{a^2,\rho_2}$ be two halfspaces supporting $K$ in $z^1$ and $z^2$, respectively. Moreover, let $p^1,p^2,p^3$ be well-spread asymmetry points of $K$. Then the following 
are true: 
\begin{enumerate}[(i)]
    \item If $p^i \in \pos\{z^1,z^2\}$, then $s(K) p^i \in H^>_{a^1,\rho_1}\cap H^>_{a^2,\rho_2}$.
    \item There exists either an asymmetry point of $K$ or of $-K$ in $\pos\{z^1,z^2\}$.
\end{enumerate}
\end{lemma}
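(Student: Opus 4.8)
The plan is to translate everything into the radial/gauge description of $K$ and $-K$ and to exploit two structural facts: that $s(K)>1$ forces a definite ``inner/outer'' relationship between the two boundaries on each crossing-free cone, and that $s(K)>\varphi$ rules out any common supporting line of $K$ and $-K$ at a crossing point (by Propositions \ref{prop:old_results} and \ref{prop:GoldenHouse}, exactly as noted in the remark preceding the lemma). Throughout I use that $s(-K)=s(K)$, so that $q$ is an asymmetry point of $-K$ if and only if $-q$ is an asymmetry point of $K$; that an asymmetry point $p$ of $K$ is characterized by $p,-s(K)p\in\bd(K)$; and that $0\in\inter(K)$, whence $\rho_1,\rho_2>0$. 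Writing $s=s(K)$ and $C=\pos\{z^1,z^2\}$, consecutiveness means $\bd(K)$ meets $\bd(-K)$ nowhere in $\inter(C)$, so along the boundary arc of $K$ running through $\inter(C)$ the body $K$ lies radially strictly on one side of $-K$: either $\bd(K)$ is radially inside $-K$ on all of $\inter(C)$, or radially outside.

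For (i) I assume $p^i\in C$. Since $p^i\in\bd(K)$ and $sp^i\in\bd(-K)$ with $s>1$, the point $p^i=\tfrac1s(sp^i)$ is a convex combination of $sp^i\in -K$ and $0\in\inter(-K)$, hence $p^i\in\inter(-K)$; thus on $\inter(C)$ the arc of $\bd(K)$ is radially strictly inside $-K$, so that $\bd(-K)$ is the radially outer boundary on $C$ and $sp^i$ is precisely its point in direction $p^i$. Because $s>\varphi$, $K$ and $-K$ admit no common supporting line at $z^1$; hence $a^1$ is not an outer normal of $-K$ at $z^1$, and therefore $-K\not\subset H^{\le}_{a^1,\rho_1}$, i.e. $-K$ overhangs the line $H^{=}_{a^1,\rho_1}$. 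I would then show this overhang occurs on the $z^2$-side of $z^1$, that is inside $C$, and that its radially outermost point in direction $p^i$ --- namely $sp^i$ --- satisfies $a^1\cdot(sp^i)>\rho_1$; the symmetric argument at $z^2$ gives $a^2\cdot(sp^i)>\rho_2$, yielding $sp^i\in H^{>}_{a^1,\rho_1}\cap H^{>}_{a^2,\rho_2}$. The cleanest way to make the last step rigorous is to pass to the antipodal point $-sp^i\in\bd(K)$, whose direction $-p^i$ lies in $\inter(-C)$, where by central symmetry $\bd(K)$ is the outer boundary: a convexity/separation estimate comparing $-sp^i$ with $-z^1$ along $a^1$ gives $a^1\cdot(-sp^i)<-\rho_1$, which is the desired strict inequality.

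For (ii), by Corollary \ref{cor:zero-inside} we have $0\in\inter(\conv\{p^1,p^2,p^3\})$, so the directions of $p^1,p^2,p^3$ have all angular gaps smaller than $\pi$; a short computation then shows that the six directions of $\pm p^1,\pm p^2,\pm p^3$ alternate around the circle, the $p^j$ being the directions where the ratio $g(u):=\|u\|_{K}/\|u\|_{-K}$ attains its maximum $s$ (asymmetry points of $K$) and the $-p^j$ those where it attains its minimum $1/s$ (asymmetry points of $-K$). Since a crossing is exactly a direction with $g=1$, on the crossing-free cone $C$ we have, say, $g>1$ throughout $\inter(C)$, the case $g<1$ being symmetric after exchanging $K$ and $-K$. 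Maximizing $g$ over $\bar C$ produces an interior direction $u^{*}$, and I would show that $u^{*}$ is in fact an asymmetry direction of $K$, so that the corresponding boundary point is an asymmetry point of $K$ lying in $C$; this completes the proof.

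The main obstacle is precisely this last step of (ii): upgrading a direction that merely maximizes $g$ over $C$ to a genuine, \emph{global} asymmetry direction (where $g=s$). A purely local first-order analysis only delivers parallel supporting lines of $K$ at the two boundary points in directions $u^{*}$ and $-u^{*}$, which by itself does not force $g(u^{*})=s$. I expect to close this gap by combining that parallel-support configuration with the constraint $s>\varphi$ (equivalently, the impossibility of common support established above) and with the well-spread normals $a^1,a^2,a^3$ of Corollary \ref{cor:zero-inside}; the analogous, but easier, separation estimate is also what underlies the delicate final step of part (i).
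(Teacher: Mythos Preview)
Your proposal has genuine gaps in both parts, and the paper's argument is structurally different.

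For (i), the step you flag as ``cleanest'' --- passing to $-sp^i\in\bd(K)$ in the antipodal cone and claiming a convexity/separation estimate yields $a^1\cdot(-sp^i)<-\rho_1$ --- does not follow. On $-C$ you only know that $\bd(K)$ is \emph{radially} outside $\bd(-K)$; this is a comparison along rays through $0$, not along the direction $a^1$. The halfspace $H^{\ge}_{a^1,-\rho_1}$ supports $-K$ at $-z^1$, but a point that is radially farther from $0$ than $\bd(-K)$ can perfectly well still lie in $H^{\ge}_{a^1,-\rho_1}$. So the inequality $a^1\cdot(sp^i)>\rho_1$ is not established. Similarly, the earlier claim that the overhang $(-K)\cap H^{>}_{a^1,\rho_1}$ must contain the specific point $sp^i$ is asserted, not proved: knowing that $sp^i$ is the radially outermost point of $-K$ in direction $p^i$ does not by itself place it beyond the line $H^{=}_{a^1,\rho_1}$.

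For (ii), you correctly identify the obstacle and do not overcome it. A direction $u^*$ maximizing $g$ over $\bar C$ yields (at best, under smoothness) parallel supporting lines of $K$ at the two boundary points $r u^*$ and $-r' u^*$, but since $r\neq r'$ these are not antipodal, so Proposition~\ref{prop:old_results}(ii) does not apply and nothing forces $g(u^*)=s$. Your proposed rescue via the well-spread normals and $s>\varphi$ is only a hope, not an argument.

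The paper avoids all of this by arguing \emph{by contradiction through a cut-off construction}. For (i), assume $sp^i\in H^{\le}_{a^1,\rho_1}$ and set $K':=K\cap(-H^{\le}_{a^1,\rho_1})$. One checks (using that $(-K)\cap H^{>}_{a^1,\rho_1}\subset\pos\{z^1,z^2\}$ and that the other asymmetry points lie outside this cone) that all three pairs $p^j,-sp^j$ survive in $K'$, so $K'$ is still Minkowski centered with $s(K')=s$. But now $\pm z^1\in\bd(K')$ carry the parallel supports $H^{\le}_{\pm a^1,\rho_1}$, whence $\alpha(K')=1$ by Proposition~\ref{prop:old_results}, contradicting $s>\varphi$ via Proposition~\ref{prop:GoldenHouse}. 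Part (ii) is literally the same construction, started from the assumption that no $\pm p^j$ lies in $\pos\{z^1,z^2\}$. This reduction to the $\alpha=1$ obstruction is the key idea you are missing; it replaces your delicate radial/separation analysis by a single surgery on $K$.
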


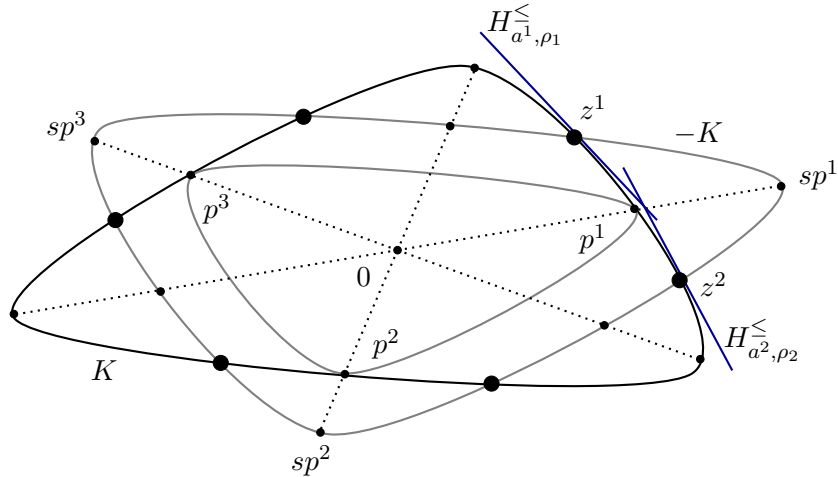
\begin{figure}[ht] 
\centering
\begin{tikzpicture}[scale=5]
\draw [gray, thick] plot [smooth cycle] coordinates {(-0.1,0.45) (0.5,-0.37) (1.7,0.3)};
\draw [ thick, rotate around={180:(0.68,0.12)}] plot [smooth cycle] coordinates {(-0.1,0.45) (0.5,-0.37) (1.7,0.3)};
\draw [gray, thick, scale=0.65, shift={(0.32,0.05)}] plot [smooth cycle] coordinates {(-0.1,0.45) (0.5,-0.37) (1.7,0.3)};

\draw [thick, dotted] (0.68,0.12) -- (1.69,0.29);
\draw [thick, dotted, rotate around={180:(0.68,0.12)}] (0.68,0.12) -- (1.69,0.29);
\draw [thick, dotted] (0.68,0.12) -- (0.47,-0.37);
\draw [thick, dotted, rotate around={180:(0.68,0.12)}] (0.68,0.12) -- (0.47,-0.37);
\draw [thick, dotted] (0.68,0.12) -- (-0.12,0.41);
\draw [thick, dotted, rotate around={180:(0.68,0.12)}] (0.68,0.12) -- (-0.12,0.41);

\draw [thick, dblue] (1.37,0.2) -- (0.9,0.7);
\draw [thick, dblue] (1.57,-0.2) -- (1.28,0.34);

\draw [fill] (0.68,0.12) circle [radius=0.01];
\draw [fill] (1.7,0.29) circle [radius=0.01];
\draw [fill, rotate around={180:(0.68,0.12)}] (1.7,0.29) circle [radius=0.01];
\draw [fill, rotate around={180:(0.68,0.12)}] (0.475,-0.365) circle [radius=0.01];
\draw [fill] (0.475,-0.365) circle [radius=0.01];
\draw [fill] (-0.125,0.41) circle [radius=0.01];
\draw [fill, rotate around={180:(0.68,0.12)}] (-0.125,0.41) circle [radius=0.01];
\draw [fill] (1.31,0.23) circle [radius=0.01];
\draw [fill, rotate around={180:(0.68,0.12)}] (1.31,0.23) circle [radius=0.01];
\draw [fill] (0.13,0.32) circle [radius=0.01];
\draw [fill, rotate around={180:(0.68,0.12)}] (0.13,0.32) circle [radius=0.01];
\draw [fill] (0.54,-0.21) circle [radius=0.01];
\draw [fill, rotate around={180:(0.68,0.12)}] (0.54,-0.21) circle [radius=0.01];
\draw [fill] (1.43,0.04) circle [radius=0.02];
\draw [fill,  rotate around={180:(0.68,0.12)}] (1.43,0.04) circle [radius=0.02];
\draw [fill] (0.43,0.475) circle [radius=0.02];
\draw [fill,  rotate around={180:(0.68,0.12)}] (0.43,0.475) circle [radius=0.02];
\draw [fill] (1.15,0.42) circle [radius=0.02];
\draw [fill,  rotate around={180:(0.68,0.12)}] (1.15,0.42) circle [radius=0.02];

\draw (0.59,0.05) node {$0$};
\draw (1.2,0.5) node {$z^1$};
\draw (1.52,0.03) node {$z^2$};
\draw (1.2,0.15) node {$p^1$};
\draw (1.8,0.32) node {$sp^1$};
\draw (0.65,-0.13) node {$p^2$};
\draw (0.45,-0.44) node {$sp^2$};
\draw (0.2,0.23) node {$p^3$};
\draw (-0.2,0.45) node {$sp^3$};
\draw (1.48,0.43) node {$-K$};
\draw (-0.1,-0.2) node {$K$};
\draw (1.02,0.72) node {$H^\le_{a^1,\rho_1}$};
\draw (1.65,-0.12) node {$H^\le_{a^2,\rho_2}$};
\end{tikzpicture}
\caption{Construction from Lemma \ref{lem:crossings} and Theorem \ref{thm:crossings}: 
$K$ (black), $-K$ and $-\frac{1}{s(K)}K$  (gray), 
$\bd(K) \cap \bd(-K)$ (big black dots), halfspaces $H^\le_{a^1,\rho_1}$, $H^\le_{a^2,\rho_2}$  supporting $K$ at consecutive points $z^1$ and $z^2$, respectively, and $p^1, p^2, p^3$ well-spread asymmetry points of $K$.
}
\end{figure}

\begin{proof}
We start proving (i) and define $s:=s(K)$. Notice that $(-K) \cap H^>_{a^1,\rho_1} \subset\pos(\{z^1,z^2\})$ and that, since $0\in\conv(\{p^1,p^2,p^3\})$, between any two of those three asymmetry points, we find points from $\bd(K)\cap\bd(-K)$. To check the latter, consider e.g.~$p^1$ and $p^2$. Since $sp^1,-p^3 \in\bd(-K)\cap\pos(\{p^1,p^2\})$ and $p^1,-sp^3\in\bd(K)\cap\pos(\{p^1,p^2\})$,
we find a point from $\bd(K)\cap\bd(-K)$ in $\inter(\pos(\{p^1,-p^3\})) \subset \inter(\pos\{p^1,p^2\})$. 


Suppose (i) were wrong, then we may 
assume w.l.o.g.~that $p^1 \in \pos\{z^1,z^2\}$ and 
$sp^1 \in H_{a^1,\rho_1}^\le$.
Because of the observation before it follows that $p^2,p^3 \notin \pm\pos(\{z^1,z^2\})$. 
Defining $K':=K \cap (-H_{a^1,\rho_1}^\le)$, we obtain $-sp^j\in K'$, $j=1,2,3$, and therefore $-\frac1s K' \subset K'$, with $p^1,p^2,p^3$ being well-spread asymmetry point of $K'$, too. 
It follows from Proposition \ref{prop:Opt_Containment} that $K'$ is still Minkowski centered and $s(K')=s$. Moreover, the halfspaces $ H^\le_{\pm a^1,\rho_1}$ support $K'$ in the points $\pm z^1$, respectively, implying $\alpha(K') = 1$ by Proposition \ref{prop:old_results}. However, this means $K'$ is Minkowski centered with $s(K') > \varphi$ and $\alpha(K') = 1$, contradicting Proposition \ref{prop:GoldenHouse}.

The proof of (ii) is completely analogous to the one above, starting here with the assumption that there is no asymmetry point between $z^1,z^2$. 
\end{proof}

\begin{proof}[Proof of Theorem \ref{thm:crossings}] 
Since $s:=s(K)>1$, 
there exists a triple of well-spread asymmetry points $p^1,p^2,p^3$ of $K$. Then $-p^i\in\pos(\{p^j,p^k\})$, whenever $\{i,j,k\}=\{1,2,3\}$. In particular, this means that we find at least one point from $\bd(K)\cap\bd(-K)$ within each cone $\inter(\pos(\{p^i,-p^j\}))$, $i \neq j$, which shows that $\bd(K)\cap\bd(-K)$ contains at least 
6 points.

Now, let us assume that $\bd(K) \cap \bd(-K)$ contains more than 6 points. Using the pigeonhole principle, we see that there must exist
two consecutive points $z^1,z^2\in\bd(K)\cap\bd(-K)$, such that $\pm p^i\notin \pos(\{z^1,z^2\})$, $i=1,2,3$. 
Let us assume w.l.o.g.~that there exists some
$z\in\bd(K)\cap\inter(-K)\cap\inter(\pos(\{z^1,z^2\}))$. Let $H^\le_{a,\rho}$ be a supporting halfspace of $K$ in $z$. 
Then, $\inter(-K)\cap H^>_{a,\rho} \subset\pos(\{z^1,z^2\})$, and therefore $K \setminus\{-\pos(\{z^1,z^2\})\} \subset K\cap(-H^\le_{a,\rho}) =:K'$.
It follows that $p^i,-sp^i\in K'$, $i=1,2,3$, and therefore that $s(K') = s \geq\varphi$ by the assumption of the theorem. Moreover, since $\pm H^\le_{a,\rho}$ are parallel supporting halfspaces of $K'$ in $\pm z$, we have $\alpha(K')=1$ by Proposition \ref{prop:old_results}.

For $s >\varphi$ this would directly contradict Proposition \ref{prop:GoldenHouse}. 
In case of $s= \varphi$ we have that $K'$ is Minkowski centered with $\alpha(K')=1$ and $s(K') =\varphi$. Thus, by the equality case of Proposition \ref{prop:GoldenHouse}, we obtain that $K'$ equals the golden house $\mathbb{GH}$ up to a linear transformation. However, this constradicts our assumption of more than 6 points in $\bd(K')\cap\bd(-K')$.
\end{proof}

In case of $1<s< \varphi$, 
the set $\bd(K) \cap \bd(-K)$ may consist of an uncountably infinite amount of points, an arbitrarily large finite amount, or a small number of points as well.

\begin{example}\label{rem:alpha1s1varphi}
\begin{enumerate}[(i)]
\item For any $s \in [1,\varphi)$ there exists a Minkowski centered $K \in\K^2$ with $s(K)=s$, such that the set $\bd(K) \cap \bd(-K)$ is uncountable. 

    Let $K_t:=\conv\left(\left\{\smallmat{\pm 1 \\ 0}, \smallmat{\pm 1 \\ -1}, \smallmat{0 \\ t} \right\} \right) \in \K^2$ with $t\in[0,\varphi)$. 
It is not hard to verify that $K_t-x_t\subset^{opt}s(-K_t+x_t)$, where
\[
s=\frac{t+\sqrt{9t^2+12t+4}}{2(t+1)}\quad\text{and}\quad x_t=\begin{pmatrix} 0 \\ \frac{t-s}{s+1}\end{pmatrix}.
\]
Thus, we obtain $\{s(K_t):t\in[0,\varphi)\}=[1,\varphi)$, and since $\pm u^1 \in \bd(K_t)$, $t\in[0,\varphi)$, with $H^=_{u^1,\pm 1}$ supporting $K_t$ in $\pm u^1$, we have $\alpha(K_t)=1$ for every $t\in[0,\varphi)$.

Finally, $\left[ \smallmat{ 1 \\ \frac{t-s}{s+1} }, \smallmat{1 \\ \frac{s-t}{s+1}}\right] \subset \bd(K_t)\cap\bd(-K_t)$, thus the boundaries of $\pm K_t$ possess \emph{infinitely} many common points.
\item Let $K \in \K^2$ be a Minkowski centered regular $k$-gon with $k \geq 5$ odd. By \cite[Example 4.2]{BDG1}, $s(K)=\frac{1}{\cos(\frac{\pi}{k})} < \varphi$ and $\bd(K)\cap\bd(-K)$ consists of $2k$ points.
\item Let $S$ be a (regular) simplex and $K= S \cap (-sS)$ with $s \in [1,\varphi)$. It is easy to see that $s(K)=s$, that $K \cap (-K)= S \cap (-S)$, and that $\bd(K) \cap \bd(-K)$ contains exactly 6 points. 
\end{enumerate}
\end{example}

We discuss the locations of the touching points of $K\cap (-K)$ and $\alpha(K) \, \conv(K \cup (-K))$. 

\begin{lemma}\label{lem:Charact_Opt_Means}
Let $K\in\K^2$ be Minkowski centered with $s(K)>1$ and $p \in \bd(K\cap (-K) )  \cap \bd(\alpha(K) \, \conv(K \cup (-K)))$. Then one of the following must be true:
\begin{enumerate}[(i)]
  \item $p \in \bd(K) \cap \bd(-K)$ or
  \item $\frac{1}{\alpha(K)} p \not \in \conv (K \cup (-K)) \setminus (K \cup (-K))$.
\end{enumerate}
Moreover, if (i) is fulfilled, there exist halfspaces $H^\le_{a, \rho}$ and $H^\le_{-a, \rho}$, such that 
each of them supports both, $K$ and $-K$, while $H^\le_{a, \alpha(K) \rho}$ and $H^\le_{-a, \alpha(K) \rho}$ support $K \cap (-K)$ in $p$ and $-p$, respectively.
\end{lemma}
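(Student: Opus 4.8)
The plan is to translate the touching condition into an identity between support functions and then read everything off from it. Write $\alpha:=\alpha(K)$, $A:=K\cap(-K)$ and $B:=\conv(K\cup(-K))$; both are $0$-symmetric bodies, and the origin, being the Minkowski center of $K$, lies in $\inter(K)$, hence in the interiors of $A$ and $B$, so every support value $h_K(a):=\max_{x\in K}a^Tx$ is strictly positive for $a\neq 0$ (and $h_{-K}(a)=h_K(-a)$). Since $A\subset^{opt}\alpha B$ with the two touching at $p$, any supporting halfspace $H^\le_{a,\sigma}$ of $\alpha B$ at $p$ also supports $A$ at $p$, because $A\subset\alpha B$. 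Writing $h_A,h_B$ for the support functions of $A,B$, this yields $\sigma=a^Tp=h_A(a)=\alpha\,h_B(a)=\alpha\max\{h_K(a),h_K(-a)\}$. Setting $q:=\tfrac1\alpha p$ one has $q\in\bd(B)$ and $a^Tq=\max\{h_K(a),h_K(-a)\}$; moreover, since $q\in B$ always, condition (ii) is simply equivalent to $q\in K\cup(-K)$.

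First I would prove the dichotomy in contrapositive form: assume (i) fails. As $\bd(A)\subset\bd(K)\cup\bd(-K)$, the point $p$ lies on exactly one of the two boundaries; up to exchanging $K$ and $-K$, say $p\in\bd(K)\cap\inter(-K)$. Because $p$ is interior to $-K$, the sets $A$ and $K$ coincide in a neighbourhood of $p$, so the supporting normal $a$ of $A$ at $p$ is an outer normal of $K$ at $p$, i.e.\ $h_K(a)=a^Tp=\sigma$. Substituting into $\sigma=\alpha\max\{\sigma,h_K(-a)\}$ I split into two subcases: if $h_K(-a)\le\sigma$, then $\sigma=\alpha\sigma$ forces $\alpha=1$ (as $\sigma>0$) and $q=p\in K$; if $h_K(-a)>\sigma$, then the face $F_B(a):=\{x\in B:a^Tx=h_B(a)\}$ is carried by $-K$ (since $h_{-K}(a)=h_K(-a)>h_K(a)$), whence $q\in F_B(a)\subset -K$. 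In either subcase $q\in K\cup(-K)$, which is (ii).

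For the ``moreover'' part I assume (i), i.e.\ $p\in\bd(K)\cap\bd(-K)$, and claim that the same normal $a$ satisfies $h_K(a)=h_K(-a)$. I would argue by contradiction: if, say, $h_K(a)>h_K(-a)$, then $F_B(a)$ is carried by $K$, so $q\in F_B(a)\subset K$ and hence $\|q\|_K\le 1$; but $p\in\bd(K)$ gives $\|q\|_K=\tfrac1\alpha\|p\|_K=\tfrac1\alpha\ge 1$, forcing $\alpha=1$ and $q=p$, and then $a^Tp=h_K(a)>h_{-K}(a)\ge a^Tp$ (the last inequality since $p\in -K$), a contradiction. With $\rho:=h_K(a)=h_K(-a)$ all four values $h_{\pm K}(\pm a)$ equal $\rho$, so $H^\le_{a,\rho}$ and $H^\le_{-a,\rho}$ each support both $K$ and $-K$; and since $\alpha\rho=\alpha h_B(a)=\sigma=a^Tp=h_A(a)$, the halfspaces $H^\le_{a,\alpha\rho}$ and $H^\le_{-a,\alpha\rho}$ support $A=K\cap(-K)$ at $p$ and, by $0$-symmetry of $A$, at $-p$, as required.

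I expect the main obstacle to be the two places where the argument degenerates at $\alpha=1$ (equivalently $q=p$): in the dichotomy this is what collapses the first subcase, and in the ``moreover'' part it is exactly the configuration of a genuine crossing point of $\bd(K)$ and $\bd(-K)$ lying on $\bd(B)$. The clean way around both, which I would adopt, is the uniform support-function contradiction above, so that I never have to case-split on whether $q$ sits in the relative interior of a bridge edge of $\conv(K\cup(-K))$ or coincides with $p$. A secondary point requiring care is the normal-cone step in the dichotomy --- justifying that the supporting normal of $A$ at a point interior to $-K$ is genuinely an outer normal of $K$ --- which follows from the local coincidence $A=K$ near such a point together with convexity.
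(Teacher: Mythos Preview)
Your proof is correct, and it takes a genuinely different route from the paper's. The paper argues geometrically: it first disposes of $\alpha=1$ by a direct observation, then for $\alpha<1$ and the ``moreover'' part explicitly locates $q=\tfrac1\alpha p$ on a bridge segment $[x,y]\subset\bd(B)$ with $x\in\ext(K)$, $-y\in\ext(K)$, reading off the common supporting halfspace from that edge; for the dichotomy it assumes both (i) and (ii) fail and chases a crossing point $z\in\bd(K)\cap\bd(-K)$ inside $\pos\{x,y\}$ to a metric contradiction $\tfrac1\alpha>\xi>\tfrac1\alpha$. Your approach replaces all of this with the single identity $h_A(a)=\alpha\,h_B(a)=\alpha\max\{h_K(a),h_K(-a)\}$ and the face inclusion $F_B(a)\subset\pm K$ whenever one of $h_K(\pm a)$ strictly dominates; this is cleaner, avoids the separate $\alpha=1$ branch, and---as you may have noticed---never uses planarity, so it actually proves the lemma for Minkowski centered $K\in\K^n$ in every dimension. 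The only steps worth spelling out slightly more in a final write-up are (a) the normal-cone inclusion $N_{\alpha B}(p)\subset N_A(p)=N_K(p)$ when $p\in\bd(K)\cap\inter(-K)$, which you already flag, and (b) the fact that $\ext(F_B(a))\subset\ext(B)\subset\ext(K)\cup\ext(-K)$, so that strict inequality $h_K(a)<h_B(a)$ forces $F_B(a)\subset -K$ by Krein--Milman; both are routine. The paper's argument, in exchange, gives a more explicit picture of where the supporting slab sits (through the endpoints $x,y$ of the bridge edge), which feeds naturally into its later constructions.
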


Note that Case (ii) of Lemma \ref{lem:Charact_Opt_Means} is equivalent to one of the following two statements getting true:
\[p \in \bd(K) \text{ with } \frac{1}{\alpha(K)} p \in \bd(-K) \quad \text{or} \quad
p \in \bd(-K) \text{ with } \frac{1}{\alpha(K)} p \in \bd(K).\]

\begin{figure}[ht] 
\centering
\begin{tikzpicture}[scale=5]
\draw [gray, thick] plot [smooth cycle] coordinates {(-0.1,0.45) (0.5,-0.37) (1.7,0.3)};
\draw [ thick, rotate around={180:(0.68,0.12)}] plot [smooth cycle] coordinates {(-0.1,0.45) (0.5,-0.37) (1.7,0.3)};
\draw [thick, dblue] (1.77,0.4) -- (1.41,-0.35);
\draw [thick, dblue, shift={(-0.165,0)}] (1.77,0.4) -- (1.41,-0.35);
\draw [thick, dblue, shift={(-1.67,0.15)}] (1.77,0.4) -- (1.41,-0.35);
\draw [thick, dblue, shift={(-1.84,0.15)}] (1.77,0.4) -- (1.41,-0.35);
\draw [thick, dotted] (0.68,0.12) -- (1.59,0.03);
\draw [thick, dotted, rotate around={180:(0.68,0.12)}] (0.68,0.12) -- (1.59,0.03);

\draw [fill] (0.68,0.12) circle [radius=0.01];
\draw [fill] (1.43,0.04) circle [radius=0.02];
\draw [fill, rotate around={180:(0.68,0.12)}] (1.43,0.04) circle [radius=0.02];

\draw (0.59,0.05) node {$0$};
\draw (1.35,0.09) node {$p$};
\draw (0.05,0.13) node {$-p$};
\draw (1.2,-0.37) node {$H^\le_{a, \alpha(K) \rho}$};
\draw (0.23,0.58) node {$H^\le_{-a, \alpha(K) \rho}$};
\draw (1.55,-0.3) node {$H^\le_{a, \rho}$};
\draw (-0.2,0.57) node {$H^\le_{-a, \rho}$};
\draw (1.48,0.43) node {$-K$};
\draw (-0.1,-0.2) node {$K$};

\end{tikzpicture}
\caption{Construction for Case (i) of Lemma \ref{lem:Charact_Opt_Means}: $K$ (black), $-K$ (gray), $H^\le_{a, \rho}$, $H^\le_{-a, \rho}$ and $H^\le_{a, \alpha(K) \rho}$, $H^\le_{-a, \alpha(K) \rho}$ (blue) .   
}
\label{fig:three}
\end{figure}
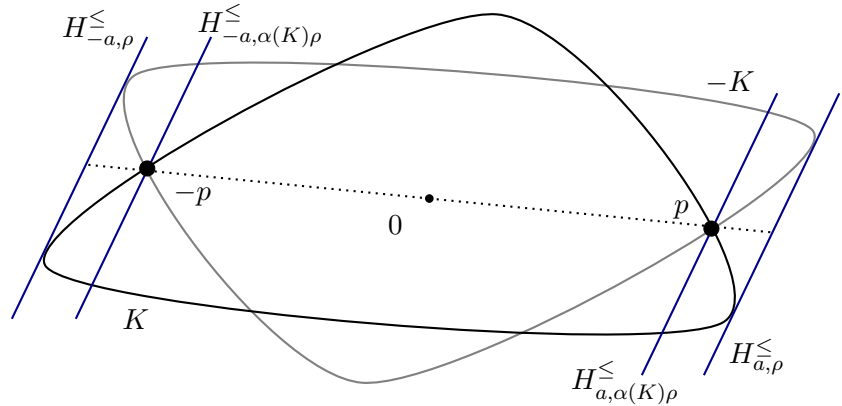

\begin{proof}[Proof of Lemma \ref{lem:Charact_Opt_Means}]
Let us start observing that if $\alpha := \alpha(K) = 1$, we have $p\in\bd(\conv(K\cup(-K))$. Hence, $p\in\bd(K)\cap\bd(-K)$, i.e.~(i) applies. 
Hence, we may assume $\alpha <1$ for the rest of the proof.

We first show the \enquote{moreover}-part of the statement. Thus, assume $p \in \bd(K) \cap \bd(-K)$ and let $s:=s(K)>1$. 
Since $\alpha <1$, we have $\pm \frac{1}{\alpha} p \not \in K$, but there exist $x,-y \in \ext(K)$ such that $\frac{1}{\alpha} p \in [x,y] \subset \bd(\conv(K \cup (-K)))$ and a halfspace $H^\le_{a, \rho}$ supporting $\conv(K \cup (-K))$ in $\frac{1}{\alpha} p$ and thus in $[x,y]$. 

Since $K \cap (-K)$ and $\conv(K \cup (-K))$ are both symmetric, the halfspace $H^\le_{-a, \rho}$ supports $\conv(K \cup (-K))$ in $-\frac{1}{\alpha} p$. Using $\frac{1}{\alpha} (K \cap (-K)) \subset \conv(K \cup (-K)) \subset H^\le_{a, \rho}$, we see that $H^\le_{\pm a, \alpha\rho}$ support $K \cap (-K)$ in $\pm p$. 

Now, for the sake of a contradiction, let us assume (i) and (ii) are wrong. Doing so, we may assume w.l.o.g.~that $p \in \bd(-K) \cap \inter(K)$ and $\frac{1}{\alpha} p \in \conv (K \cup (-K)) \setminus K$.
Then there exist $x,-y \in \bd(-K) \setminus K$,
such that $\frac1\alpha p \in [x,y] \subset \conv(K \cup (-K)).$
Let $z \in \bd(K) \cap \bd(-K) \cap \pos(\{x,y\})$ and
$\xi \in \R$ be such that  $\xi z \in [x,y]$. 
Since $\frac1\alpha p \in [x,y] \setminus K$, we have $\xi > 1$.
Obviously, $z\in\bd(K)\cap\bd(-K)$ and $x\in \bd(-K)\setminus K$ together imply $(\bd(-K) \cap \inter(\pos(\{x,z\}))) \cap K = \emptyset$.

Since $p\in\bd(-K)\cap\inter(K)$, we obtain $p,x,z \in \bd(-K)$ with $z \in \pos(\{p,x\})$, and, due to the convexity of $-K$, $z\in x+\pos(\{p-x,\frac1\alpha p-x\})$.
It follows that $\frac{1}{\alpha \xi} p \not \in -K$ and, since $p \in \bd(-K)$, we have $\frac1\alpha>\xi$. Together with $\frac1\alpha(K\cap(-K))\subset\conv(K\cup(-K))$, $z\in K\cap(-K)$, and $\xi z\in\bd(\conv(K\cup(-K))$, this implies the desired contradiction.
\end{proof}

We present a family of sets, where the situation described in Lemma \ref{lem:Charact_Opt_Means} (ii) happens.

\begin{example}\label{ex:case_two}
Let $S=\conv(\{p^1,p^2,p^3\})$ be a regular Minkowski centered triangle with $R(S, \B_2)=1$ and $M:=\left\{ \smallmat{ \rho_1 \\ \rho_2} : 1 \leq \rho_1 \leq 2, \frac{\rho_1^2-\rho_1+1}{\rho_1+1} \leq \rho_2 \leq \frac{\rho_1}{2} \right\}$. We define 
\[
K := K_{\rho_1,\rho_2} := \conv(((-S) \cap \rho_1 S) \cup \rho_2 S) \text{ with } \smallmat{ \rho_1 \\ \rho_2 }  \in M. 
\]


    
    
    

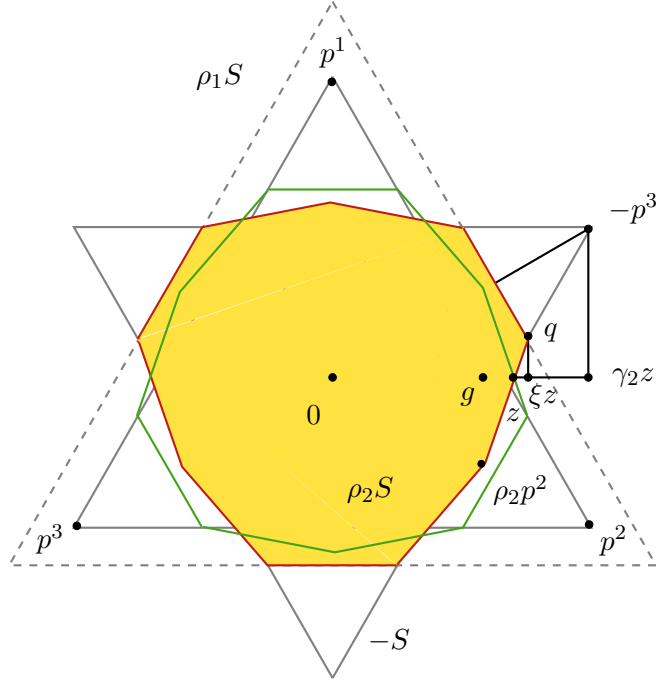
\begin{figure}[ht]
\begin{tikzpicture}[scale=5]
    \draw [thick, scale=0.8, gray, rotate around={180:(0,0)}] (0,1) -- (0.86,-0.5)--(-0.86,-0.5)--(0,1);
    \draw [thick, scale=0.8, gray] (0,1) -- (0.86,-0.5)--(-0.86,-0.5)--(0,1);
    \draw [thick, gray, dashed] (0,1) -- (0.86,-0.5)--(-0.86,-0.5)--(0,1);
    \draw [thick, scale=0.46, gray, dashed] (0,1) -- (0.86,-0.5)--(-0.86,-0.5)--(0,1);
    \draw [thick] (0,0) -- (0.68,0.395)-- (0.68,0);
    \draw [thick] (0,0) -- (0.68,0);
    \draw [thick] (0,0) -- (0.52,0.11)--(0.52,0);
    \draw [thick] (0,0) -- (0.4,-0.23)--(0.4,0);
    
    \draw [thick,dred, rotate around={180:(0,0)}, fill=lgold, fill opacity=0.7] (-0.17,0.5) -- (0.173,0.5)--(0.4,0.238)--(0.517,-0.1);
    \fill [fill=lgold, fill opacity=0.7] (0.35,0.4)-- (0.172,-0.5)--(-0.52,0.1);
    \draw [thick, rotate=60,dred, fill=lgold, fill opacity=0.7] (-0.17,0.5) -- (0.173,0.5)--(0.4,0.238)--(0.517,-0.1);
    \draw [thick,dred, rotate=-60, fill=lgold, fill opacity=0.7] (-0.17,0.5) -- (0.173,0.5)--(0.4,0.238)--(0.517,-0.1);
   \draw [thick,dgreen, rotate around={120:(0,0)}] (-0.17,0.5) -- (0.173,0.5)--(0.4,0.238)--(0.517,-0.1);
   \draw [thick, rotate=0,dgreen] (-0.17,0.5) -- (0.173,0.5)--(0.4,0.238)--(0.517,-0.1);
   \draw [thick,dgreen, rotate around={240:(0,0)}] (-0.17,0.5) -- (0.173,0.5)--(0.4,0.238)--(0.517,-0.1);
   \draw [fill] (0,0) circle [radius=0.01];
   \draw [fill] (0.52,0.11) circle [radius=0.01];
   \draw [fill] (0.52,0) circle [radius=0.01];
   \draw [fill] (0.68,0.395) circle [radius=0.01];
   \draw [fill] (0.68,0) circle [radius=0.01];
   \draw [fill] (0.48,0) circle [radius=0.01];
   \draw [fill, scale=0.46] (0.86,-0.5) circle [radius=0.02];
   \draw [fill] (0.4,0) circle [radius=0.01];
   \draw [fill] (-0.68,-0.395) circle [radius=0.01];
   \draw [fill, rotate around={120:(0,0)}] (-0.68,-0.395) circle [radius=0.01];
   \draw [fill, rotate around={-120:(0,0)}] (-0.68,-0.395) circle [radius=0.01];

   \draw (-0.3,0.8) node {$\rho_1 S$};
   \draw (0.15,-0.7) node {$-S$};
   \draw (-0.05,-0.1) node {$0$};
   \draw (0.1,-0.3) node {$\rho_2 S$};
   \draw (0.8,0.45) node {$-p^3$};
   \draw (-0.75,-0.43) node {$p^3$};
   \draw[rotate around={-120:(0,0)}] (-0.75,-0.43) node {$p^1$};
   \draw[rotate around={120:(0,0)}] (-0.75,-0.43) node {$p^2$};
   \draw (0.8,0) node {$\gamma_2 z$};
   \draw (0.56,-0.05) node {$\xi z$};
   \draw (0.36,-0.05) node {$g$};
   \draw (0.58,0.12) node {$q$};
   \draw (0.5,-0.3) node {$\rho_2 p^2$};
   \draw (0.48,-0.1) node {$z$};
   \end{tikzpicture}
   \caption{Constructions used in Example \ref{ex:case_two}: $K$ (yellow), $-K$ (green), $S$ and $-S$ (gray), $\rho_2 S$ and $\rho_1 S$ (gray dashed).
   }
\end{figure}

Let further $z \in \bd(K) \cap \bd(-K) \cap  \pos(\{p^2,-p^3\})$ with $\xi z \in \bd (\conv( K \cup (-K)))$ for some $\xi >1$. Let $\gamma_1 \leq 1$ and $\gamma_2 \geq 1$ be such that $\gamma_1 z \in \bd(S) \cap \bd(-S)$ and $\gamma_2 z \in \bd(\conv(S \cup (-S)))$, respectively. Then, since $R(S, \B_2)=1$, we have $\|\gamma_1 z\|=\frac{\sqrt{3}}{3}$ and $\|\gamma_2 z\|=\frac{\sqrt{3}}{2}$. Choose $q$ to be the vertex of $K$, such that $q \in \inter(\pos(\{p^2,-p^3\}))$.

 Now, since $\frac{\|-p^3-q\|}{\|-p^3-\gamma_1 z\|}=\frac{\|-p^3-(-\frac{\rho_1}{2} p^3)\|}{\|-p^3-(-\frac{1}{2} p^3)\|}$, we have 
\begin{equation*}
\frac{\|-p^3-q\|}{\|-p^3-\gamma_1 z\|}=\frac{1-\frac{\rho_1}{2}}{\frac{1}{2}}=2-\rho_1.
\end{equation*}
Therefore,
\begin{equation*}
\frac{\|\gamma_2 z-\gamma_1 z\|}{\|\xi z-\gamma_1 z\|}=1+\frac{\|-p^3-q\|}{\|q-\gamma_1 z\|}=1+\frac{2-\rho_1}{\rho_1-1}= \frac{1}{\rho_1-1}. 
\end{equation*}
We have
\begin{equation*}
\|\xi z\|= \|\gamma_1 z\|+ \|\xi z-\gamma_1 z\|=
\frac{\sqrt{3}}{3} +\frac{\sqrt{3}}{6} (\rho_1-1)= \frac{\sqrt{3}}{6} (\rho_1+1).
\end{equation*}
Let $g$ be the orthogonal projection of $\rho_2 p^2$ on $\lin(\{z\})$. 

Since $\| -\frac{\rho_1}{2} p^3 - q\|= \tan(\frac{\pi}{6}) \cdot \| -p^3-\left(-\frac{\rho_1}{2} p^3\right)\|  =\frac{\sqrt{3}}{3} \left( 1- \frac{\rho_1}{2} \right)$, 
we obtain from the Pythagorean theorem  
\begin{equation*}
\|q\|= \sqrt{\left(\frac{\rho_1}{2} \right)^2 + \left(\frac{\sqrt{3}}{3} \left(1- \frac{\rho_1}{2} \right) \right)^2 }= \sqrt{ \frac{\rho_1^2-\rho_1+1}{3}}.
\end{equation*}
Now, we calculate 
\begin{equation*}
\|q-\xi z\|= \sqrt{\|q\|^2-\|\xi z\|^2}= \sqrt{\frac{\rho_1^2-\rho_1+1}{3}- \left(\frac{\sqrt{3}}{6} (\rho_1+1) \right)^2}= \frac{\rho_1-1}{2}.
\end{equation*}
Since $\|g\|=\frac{\sqrt{3}}{2} \rho_2$, we have 
\begin{equation*}
\|g-\rho_2 p^2\|= \sqrt{\|\rho_2 p^2\|^2-\|g\|^2}= \sqrt{\rho_2^2-\frac{3}{4}\rho_2^2}= \frac{\rho_2}{2}.
\end{equation*}
Since $q,z,\rho_2p^2$ are col-linear, we have $\frac{\|g- z\|}{\|z-\xi z\|}= \frac{\|g- \rho_2 p^2\|}{\|q-\xi z\|}$, and thus
\begin{equation*}
\frac{\|g- z\|}{\|z-\xi z\|}=\frac{\frac{\rho_2}{2}}{\frac{\rho_1-1}{2}}=\frac{\rho_2}{\rho_1-1} . 
\end{equation*}

Using this fact, we obtain
\begin{equation*}
\|\xi z\|= \|g\|+\|g-z\|+\|\xi z-z\|= \frac{\sqrt{3}}{2} \rho_2+ \left( \frac{\rho_2}{\rho_1-1}+1 \right) \|\xi z-z\|.
\end{equation*}
Therefore, remembering that $\|g\|=\frac{\sqrt{3}}{2} \rho_2$, 
\begin{equation*}
\|\xi z-z\|=\frac{\|\xi z\|-\|g\|}{\frac{\rho_2}{\rho_1-1}+1} = \frac{\|\xi z\|-\frac{\sqrt{3}}{2} \rho_2}{\frac{\rho_2}{\rho_1-1}+1}
\end{equation*}
and 
\begin{align*}
 \frac{\|z\|}{\|\xi z\|}&=\frac{\|\xi z\|-\|\xi z-z\|}{\|\xi z\|}= 1-\frac{1}{\frac{\rho_2}{\rho_1-1}+1}+\frac{ \frac{\sqrt{3}}{2} \rho_2}{\left( \frac{\rho_2}{\rho_1-1}+1 \right) \frac{\sqrt{3}}{6} (\rho_1+1)}\\
 &=\frac{\rho_2 (\rho_1-1)}{\rho_1+\rho_2-1} \left(\frac{1}{\rho_1-1}+\frac{3}{\rho_1+1} \right)= \frac{2\rho_2 (2\rho_1-1)}{(\rho_1+\rho_2-1)(\rho_1+1)}.
\end{align*}
Since $\rho_2 p^2 \in \bd(K)$ is a vertex of $\rho_2 S$
and $\frac{\rho_1}{2} p^2 \in \bd(\rho_1 S) \cap \bd(-K)$, we have 
$\frac {1}{\alpha(K)} \leq \frac{\rho_1}{2 \rho_2}$.

Thus, notice that 
\begin{equation*}
\frac1{\alpha(K)} = \min \left\{ \xi, \frac{\rho_1}{2 \rho_2}\right\}= \min \left\{  \frac{(\rho_1+\rho_2-1)(\rho_1+1)}{2\rho_2 (2\rho_1-1)}, \frac{\rho_1}{2 \rho_2} \right\}.
\end{equation*}
Since
\begin{equation*}
\frac{\rho_1}{2 \rho_2} \leq \frac{(\rho_1+\rho_2-1)(\rho_1+1)}{2\rho_2 (2\rho_1-1)}
\end{equation*}
is equivalent to 
\begin{equation}\label{eq:ex_4.2}
\frac{\rho_1^2-\rho_1+1}{\rho_1+1} \leq \rho_2,
\end{equation}
we obtain 
$\alpha(K)=\frac{\rho_1}{2 \rho_2}$,  
whenever \eqref{eq:ex_4.2} is fulfilled. 
Since $\frac{\rho_1^2-\rho_1+1}{\rho_1+1} \leq \frac{\rho_1}{2}$, for any fixed $\rho_1\in[1,2]$ we can select $\rho_2$, such that $\frac{\rho_1^2-\rho_1+1}{\rho_1+1} \leq \rho_2 \leq \frac{\rho_1}{2}$, and thus selecting any $\smallmat{\rho_1 \\ \rho_2 }  \in M$ would give us examples of bodies fulfilling the condition above.
\end{example}

\section{The Proof of Theorem \ref{thm:PlanarCase}}

\begin{lemma}\label{lem:beforePlanarCase}
Let $K\in\mathcal K^2$ be Minkowski centered with $s(K)>\varphi$ and $p^1,p^2,p^3$ a triple of well-spread asymmetry points of $K$. 
\begin{enumerate}[a)]
\item Then there exists 
a Minkowski centered $K'\in\mathcal K^2$ with
\begin{enumerate}[(i)]
  \item $\conv(K' \cup (-K')))=\conv( \{ \pm sp^1, \pm sp^2, \pm sp^3\} )$,
  \item $\alpha(K') \geq \alpha(K)$, 
  \item $s(K')=s(K)$, and
  \item $p \in \bd(K' \cap (-K')) \cap \alpha(K') (\bd (\conv(K' \cup (-K'))))$ implies $p \in \bd(K') \cap \bd(-K')$.
\end{enumerate}
\item Taking $p$
as above and defining $d^1$ to be the intersection point of $\aff (\{-p,p^3\})$ and $\aff (\{p,p^2\})$, there exists $\gamma \le 1$, such that the set
\begin{equation*}
 \bar K:=\conv(\{\pm p,p^2,p^3,\gamma s(K') d^1,-s(K')p^2, -s(K')p^3\})   
\end{equation*}
fulfills
\begin{enumerate}[(i)]
  \item $\alpha(\bar K) \geq \alpha(K')$, 
  \item $s(\bar K)=s(K')$, and
  \item $p \in \bd(\bar K \cap (-\bar K)) \cap \alpha(K') (\bd (\conv(\bar K \cup (-\bar K))))$.
\end{enumerate}
\end{enumerate}

\end{lemma}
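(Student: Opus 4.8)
The plan is to prove both parts by successively trimming $K$ into a polygonal body while keeping the asymmetry fixed and the covering factor non-decreasing, exploiting that since $\alpha(K)=R(K\cap(-K),\conv(K\cup(-K)))$, shrinking the outer symmetrization $\conv(K\cup(-K))$ without shrinking the inner one $K\cap(-K)$ can only raise $\alpha$.

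\textbf{Part a).} Write $s:=s(K)$ and set $H:=\conv(\{\pm sp^1,\pm sp^2,\pm sp^3\})$. Since $-sp^i\in\bd(K)$ and $sp^i\in\bd(-K)$, all six vertices lie in $\conv(K\cup(-K))$, so $H\subseteq\conv(K\cup(-K))$, and I would take $K':=K\cap H$. Property (i) is then immediate, as $K'\cup(-K')\subseteq H$ while $-sp^i\in K\cap H=K'$ and $sp^i\in -K'$ force $\conv(K'\cup(-K'))=H$. For (iii) I note that each $p^i$ lies in $\inter(H)$, so near $p^i$ one has $K'=K$ and the supporting normals $a^i$ are unchanged, while $-sp^i\in\bd(K')$; moreover $K'=K\cap H\subseteq(-sK)\cap(sH)=-sK'$ (using $H$ symmetric and $s\ge 1$). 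Hence $p^1,p^2,p^3$ remain well-spread asymmetry points, and Proposition \ref{prop:Opt_Containment}/Corollary \ref{cor:zero-inside} give that $K'$ is Minkowski centered with $s(K')=s$. The heart of (ii) is the inclusion $K\cap(-K)\subseteq H$, which I would establish by bounding $K\cap(-K)$ inside the three symmetric slabs $\{x:|\langle a^i,x\rangle|\le\langle a^i,p^i\rangle\}$ coming from the supporting lines of $K$ at the $p^i$ and checking these lie in $H$. Granting this, $K'\cap(-K')=(K\cap(-K))\cap H=K\cap(-K)$ is unchanged while the outer hull shrinks to $H\subseteq\conv(K\cup(-K))$, so $\alpha(K')\ge\alpha(K)$.

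\textbf{Part a), property (iv).} By Lemma \ref{lem:Charact_Opt_Means}, every touching point $p$ of $K'\cap(-K')$ with $\alpha(K')\,\conv(K'\cup(-K'))$ is either a crossing (case (i), as wanted) or satisfies $p\in\bd(\pm K')$ with $\tfrac{1}{\alpha(K')}p\in\bd(\mp K')$ (case (ii)). I would rule out the latter by one further cut: in case (ii) the optimal covering is propped up by a point lying in the relative interior of an edge of $H$ rather than at a crossing, and trimming $K'$ by the halfspace through that edge's crossing converts the contact into case (i) without shrinking the inner body in the relevant direction (so $\alpha$ does not drop) and without disturbing the asymmetry certificate. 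Since $\bd(K')\cap\bd(-K')$ is finite by Theorem \ref{thm:crossings} and $H$ has six edges, only finitely many such cuts are needed, after which (i)--(iv) hold.

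\textbf{Part b).} Fix a crossing $p$ as in (a)(iv); by the \enquote{moreover} of Lemma \ref{lem:Charact_Opt_Means} there are parallel lines $H^\le_{\pm a,\rho}$ supporting both $K'$ and $-K'$ at $\pm p$, with $H^\le_{\pm a,\alpha(K')\rho}$ supporting $K'\cap(-K')$ at $\pm p$. I put $d^1:=\aff(\{-p,p^3\})\cap\aff(\{p,p^2\})$ and $\bar K:=\conv(\{\pm p,p^2,p^3,\gamma sd^1,-sp^2,-sp^3\})$. For $\gamma\le 1$ small enough all listed points lie in $K'$, so $\bar K\subseteq K'$ and $\conv(\bar K\cup(-\bar K))\subseteq H$; because $d^1$ is built from $\aff(\{p,p^2\})$ and $\aff(\{-p,p^3\})$, the point $\tfrac{1}{\alpha(K')}p$ stays on $\bd(\conv(\bar K\cup(-\bar K)))$ and $\pm p\in\bd(\bar K\cap(-\bar K))$, giving (b)(iii); as $\bar K\subseteq K'$ shrinks only the outer hull while preserving this contact, (b)(i) follows. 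Finally I would select the unique $\gamma\le 1$ for which $\gamma sd^1$, together with $-sp^2,-sp^3$ and the normals at $p^2,p^3$, reinstates a well-spread asymmetry triple, so that Corollary \ref{cor:zero-inside} yields (b)(ii), $s(\bar K)=s$.

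\textbf{Where the difficulty lies.} The delicate points are the inclusion $K\cap(-K)\subseteq H$ behind (a)(ii) and the elimination of case-(ii) contacts in (a)(iv). In part b) the crux is pinning the single free parameter $\gamma$ so that $s(\bar K)=s$ holds exactly while $p$ remains the binding contact and $\alpha(\bar K)\ge\alpha(K')$ — that is, verifying all the optimal-containment certificates of Proposition \ref{prop:Opt_Containment} for the explicit polygon $\bar K$ simultaneously.
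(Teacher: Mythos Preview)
Your construction $K'=K\cap H$ with $H=\conv(\{\pm sp^1,\pm sp^2,\pm sp^3\})$ is not the one the paper uses, and it carries two real gaps. First, your slab argument for $K\cap(-K)\subseteq H$ does not work: the intersection of the three slabs $S_i=\{x:|\langle a^i,x\rangle|\le\langle a^i,p^i\rangle\}$ need not lie in $H$. Concretely, take $K=K_{\max,s}$ for some $s\in(\varphi,2)$; then $p^1=(0,s)$, $p^2=(1/s,-1)$, $a^1=(0,1)$, $a^2=(s(s^2-1),-1)$, and the vertex of $S_1\cap S_2$ at height $x_2=s$ has $x_1=\tfrac{1}{s-1}>1$, which lies strictly outside the edge $x_1=1$ of $H$. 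So the slab hexagon overshoots $H$, and your route to (a)(ii) collapses.

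Second, even granting $K\cap(-K)\subseteq H$ (which is in fact true, but needs a different argument), your $K'$ will in general fail (a)(iv). Whenever $K\not\subseteq H$---which happens as soon as $\conv(K\cup(-K))$ is not already the hexagon $H$, e.g.\ for any smooth $K$---the cut by $H$ creates whole segments of $\bd(K')$ lying on edges of $H$. Then $\bd(H)\cap(\bd(K')\cup\bd(-K'))$ strictly contains $\{\pm sp^i\}$, so Lemma~\ref{lem:Charact_Opt_Means}(ii) cannot be excluded and your ``further cuts through that edge's crossing'' are not well-defined: the crossings $z^{i,j}\in\bd(K)\cap\bd(-K)$ sit in $\inter(H)$, not on its edges. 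The paper avoids both problems simultaneously by a different cut: using Theorem~\ref{thm:crossings} it takes the six crossing points $z^{i,j}$ and defines $K'$ by intersecting $K$ with the six halfplanes bounded by the lines through $z^{i,j}$ and the adjacent $-sp^i$. Because these lines pass through the crossings, $K'\cap(-K')=K\cap(-K)$ is immediate, and because the cut segments $[z^{i,j},-sp^i]$ meet $\bd(H)$ only at $-sp^i$, one gets $\bd(H)\cap(\bd(K')\cup\bd(-K'))=\{\pm sp^i\}$, from which (iv) follows by the $\tfrac{1}{\alpha(K')}<s$ bound. Lemma~\ref{lem:crossings}(i) is what makes the containment $-\tfrac1sK'\subset K'$ (hence (iii)) go through for this cut.

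For part (b) your outline is in the right spirit but skips the substance. You cannot simply ``choose $\gamma$ small enough so that $\bar K\subseteq K'$'' and then tune $\gamma$ for the asymmetry: $\gamma$ is \emph{determined} by the requirement that $-\gamma d^1\in[-sp^2,-sp^3]$, and the paper reaches this via a three-step move (first slide $p^1$ radially onto $[-sp^2,-sp^3]$, then along that segment to $\lin(d^1)$, then replace $K'$ by the explicit heptagon), checking at each step that the triple $\bar p^1,p^2,p^3$ stays well-spread and that $-s\bar p^1\in\conv(\{p^2,p^3,d^1\})\setminus[p^2,p^3]$. That last geometric fact is what guarantees $s(\bar K)=s$ and $s\gamma\le 1$; it does not fall out of a one-line containment argument.
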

\begin{proof} 
\begin{enumerate}[a)]
\item 
Let $s:=s(K)$. Since $s>\varphi$, 
there exists a triple of well-spread asymmetry points $p^1,p^2,p^3$ of $K$ by Corollary \ref{cor:zero-inside} and by Theorem \ref{thm:crossings}, 
we know that $\bd(K) \cap \bd(-K)$ consists of exactly 6 points. Moreover, by Lemma \ref{lem:crossings} there exist consecutive points $z^{i,1}, z^{i,2} \in \bd(K) \cap \bd(-K)$  with $-p^i \in \pos \{z^{i,1}, z^{i,2}\}$, $i=1,2,3$. Let the halfspaces $H^\le_{a^{i,1},\rho_{i,1}}$, $H^\le_{a^{i,2},\rho_{i,2}}$ be defined such that $z^{i,1}, -sp^i \in H^=_{a^{i,1},\rho_{i,1}}$, $z^{i,2}, -sp^i \in H^=_{a^{i,2},\rho_{i,2}}$, $i=1,\dots,3$, and $0 \in H^\le_{a^{i,1},\rho_{i,1}}\cap H^\le_{a^{i,2},\rho_{i,2}}$. 
We define 
\begin{equation}\label{eq:K'}
K':=K\cap\bigcap_{i=1,\dots,3}\left(H^\le_{a^{i,1},\rho_{i,1}}\cap H^\le_{a^{i,2},\rho_{i,2}}\right).
\end{equation}

\begin{figure}[ht] 
\centering
\begin{tikzpicture}[scale=5]
\draw [thick] plot [smooth cycle] coordinates {(-0.1,0.45) (0.5,-0.37) (1.7,0.3)};
\draw [thick, rotate around={180:(0.68,0.12)}] plot [smooth cycle] coordinates {(-0.1,0.45) (0.5,-0.37) (1.7,0.3)};
\draw [red, thick, scale=0.65, shift={(0.42,0.08)}, rotate around={180:(0.68,0.12)}] plot [smooth cycle] coordinates {(-0.1,0.45) (0.5,-0.37) (1.7,0.3)};

\draw [thick, dotted] (0.68,0.12) -- (1.69,0.29);
\draw [thick, dotted, rotate around={180:(0.68,0.12)}] (0.68,0.12) -- (1.69,0.29);
\draw [thick, dotted] (0.68,0.12) -- (0.47,-0.37);
\draw [thick, dotted, rotate around={180:(0.68,0.12)}] (0.68,0.12) -- (0.47,-0.37);
\draw [thick, dotted] (0.68,0.12) -- (-0.12,0.41);
\draw [thick, dotted, rotate around={180:(0.68,0.12)}] (0.68,0.12) -- (-0.12,0.41);


\fill [fill=lgold, fill opacity=0.7, domain=1:1.3, variable=\x, rotate around={180:(0.68,0.12)}] (0.42,0.47)--(0.89,0.6) -- (1.17,0.41)--(1.3,0.24)--(1.43,0.04)--(1.485,-0.17)--(0.93,-0.232)--(0.4,-0.2)--(0.2,-0.17)--(-0.35,-0.05)--(-0.07,0.2)--(0,0.25);

\draw [thick, dred] (1.15,0.42) -- (1.7,0.29) -- (1.43,0.04);
\draw [thick, dred, rotate around={180:(0.68,0.12)}] (0.92,-0.24) -- (1.49,-0.17) -- (1.43,0.04);
\draw [thick, dred, rotate around={180:(0.68,0.12)}] (0.42,0.47) --(0.89,0.6) -- (1.14,0.43);

\draw [fill] (0.68,0.12) circle [radius=0.01];
\draw [fill] (1.7,0.29) circle [radius=0.01];
\draw [fill] (0.475,-0.365) circle [radius=0.01];
\draw [fill] (-0.125,0.41) circle [radius=0.01];
\draw [fill, rotate around={180:(0.677,0.12)}] (1.31,0.23) circle [radius=0.01];
\draw [fill, rotate around={180:(0.68,0.12)}] (0.13,0.32) circle [radius=0.01];
\draw [fill, rotate around={180:(0.68,0.12)}] (0.54,-0.21) circle [radius=0.01];
\draw [fill] (1.43,0.04) circle [radius=0.01];
\draw [fill,  rotate around={180:(0.68,0.12)}] (1.43,0.04) circle [radius=0.01];
\draw [fill] (0.43,0.475) circle [radius=0.01];
\draw [fill,  rotate around={180:(0.68,0.12)}] (0.43,0.475) circle [radius=0.01];
\draw [fill] (1.15,0.42) circle [radius=0.01];
\draw [fill,  rotate around={180:(0.68,0.12)}] (1.15,0.42) circle [radius=0.01];

\draw (0.59,0.05) node {$0$};
\draw (1.22,0.5) node {$z^{3,1}$};
\draw (1.63,0.03) node {$z^{3,2}=p$};
\draw[rotate around={180:(0.68,0.12)}] (1.22,0.5) node {$z^{1,2}$};
\draw[rotate around={180:(0.68,0.12)}]  (1.63,0.01) node {$z^{2,1}=-p$};
\draw (1,-0.32) node {$z^{1,1}$};
\draw (0.4,0.55) node {$z^{2,2}$};

\draw[rotate around={180:(0.68,0.12)}] (0.65,-0.13) node {$p^1$};
\draw[rotate around={180:(0.68,0.12)}] (0.2,0.23) node {$p^2$};
\draw[rotate around={180:(0.68,0.12)}] (1.2,0.15) node {$p^3$};
\draw (0.45,-0.44) node {$-sp^1$};
\draw (-0.2,0.45) node {$-sp^2$};
\draw (1.8,0.32) node {$-sp^3$};

\draw (1.48,0.43) node {$K$};
\draw (-0.1,-0.2) node {$-K$};
\end{tikzpicture}
\caption{Construction from Lemma \ref{lem:beforePlanarCase}: 
$K$ and $-K$ (black), $-\frac{1}{s(K)}K$  (orange), 
$K'$ (yellow), segments  $[-sp^i,z^{i,j}]$, $i=1,2,3$ and $j=1,2$ (red).
}
\end{figure}
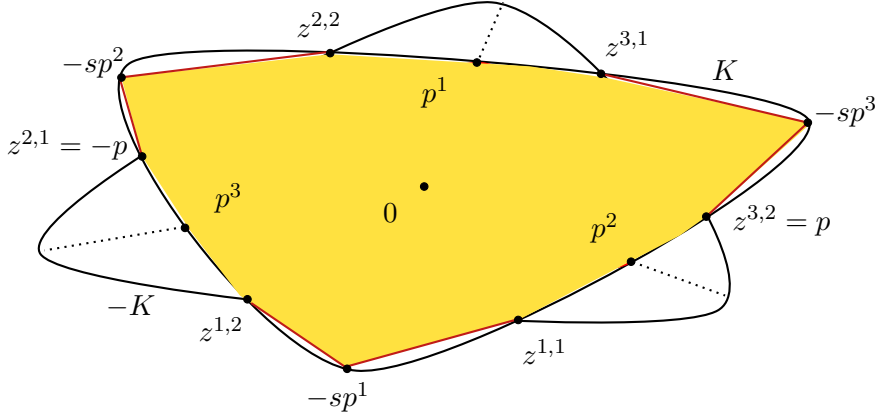 

We now show the properties (i)-(iv) for the set $K'$. 

Obviously, $\pm sp^i \in \bd(\conv(K' \cup (-K')))$, $i=1,2,3$. 
Assume $z \in (\bd(K') \cup \bd(-K')) \setminus ( \{ \pm sp^1, \pm sp^2, \pm sp^3\} \cup \bd(K' \cap (-K')))$, w.l.o.g.~$z \in \bd(K') \cap \inter(\pos(\{-sp^1,z^{1,1}\}))$.
The way we constructed $K'$, this implies $z \in [-sp^1,z^{1,1}]$. Since $-sp^1 \in \bd(\conv(K' \cup (-K')))$ and $z^{1,1} \in \inter(\conv(K' \cup (-K')))$, we obtain $z \in \inter(\conv(K' \cup (-K')))$, a contradiction. Thus, $\conv(K' \cup (-K')))=\conv( \{ \pm sp^1, \pm sp^2, \pm sp^3\} )$.

Moreover, we also have $K' \cap (-K')=K \cap (-K)$, which together with $K' \subset K$ implies $\alpha(K') \geq \alpha(K)$. 

We now need to show that $s(K')=s(K)$. Since $p^i,-sp^i\in K'$, $i=1,2,3$, we only need to check the (non-obvious) fact that $-\frac1s K'\subset K'$. 

Notice that for any $i=1,2,3$ we have
\begin{align*}
    -(K'\cap\pos(\{-z^{i,1},-z^{i,2}\}))&=-K'\cap\pos(\{z^{i,1},z^{i,2}\}) 
= -K\cap\pos(\{z^{i,1}, z^{i,2}\}) \\
&= -(K\cap\pos(\{-z^{i,1},-z^{i,2}\})).
\end{align*}
On the one hand, 
since $K'\subset K$, we have $-\frac1sK'\subset-\frac1sK\subset K$, and therefore
\[
\left(-\frac1sK' \right) \cap\pos(\{-z^{i,1},-z^{i,2}\})\subset K\cap\pos(\{-z^{i,1},-z^{i,2}\})=K'\cap\pos(\{-z^{i,1},-z^{i,2}\})
\]
for every $i=1,2,3$.

On the other hand, we need to show 
\[
(-K')\cap\pos(\{z^{i,1},z^{i,2}\}) \subset K'\cap\pos(\{z^{i,1},z^{i,2}\}), \quad i=1,2,3. 
\]
Clearly, 
\[
(-K')\cap\pos(\{z^{i,1},z^{i,2}\})=(-K)\cap\pos(\{z^{i,1},z^{i,2}\}) \subset K\cap\pos(\{z^{i,1},z^{i,2}\}), \quad i=1,2,3. 
\]
To show the needed inclusion, we additionally prove
\[
(-K')\cap\pos(\{z^{i,1},z^{i,2}\}) \subset \bigcap_{j=1,2}H^\leq_{a^{i,j},\rho_{i,j}}, \quad i=1,2,3. 
\]
Let $H^\leq_{b^{ij},\mu_{i,j}}$ 
be a supporting halfspace of $-K$ in $z^{i,j}$, $i=1,2,3$ and $j=1,2$. By Part (i) of Lemma \ref{lem:crossings}, we know that $-sp^i\in H^>_{b^{ij},\mu_{i,j}}$. Thus, 
\[
H^>_{a^{i,j},\rho_{i,j}}\cap\pos(\{z^{i,1},z^{i,2}\}) \subset H^>_{b^{ij},\mu_{i,j}}\cap\pos(\{z^{i,1},z^{i,2}\}).
\]
We conclude 
\[
\begin{split}
-K'\cap\pos(\{z^{i,1},z^{i,2}\})&=
-K\cap\pos(\{z^{i,1},z^{i,2}\}) \subset 
\bigcap_{j=1,2}H^\leq_{b^{i,j},\mu_{i,j}}\cap\pos(\{z^{i,1},z^{i,2}\}) \\
& \subset
\bigcap_{j=1,2}H^\leq_{a^{i,j},\rho_{i,j}}\cap\pos(\{z^{i,1},z^{i,2}\})
=K'\cap\pos(\{z^{i,1},z^{i,2}\}).
\end{split}
\]
Hence, $-\frac1s K'\cap\pos(\{z^i_1,z^i_2\}) \subset K'\cap\pos(\{z^i_1,z^i_2\})$, $i=1,2,3$ and therefore $-\frac1sK'\subset K'$. 

Since the points $p^1,p^2,p^3$ also build a triple of well-spread asymmetry points of $K'$, we obtain $s(K')=s(K)$.

Now, let $p \in \bd(K' \cap (-K')) \cap \alpha(K') \bd (\conv(K' \cup (-K')))$. 
Within the next paragraph we show by contradiction that $p \in \bd(K') \cap \bd(-K')$. To do so, assume $p \not \in \bd(K') \cap \bd(-K')$. Then, by Lemma \ref{lem:Charact_Opt_Means}, we obtain
$\frac{1}{\alpha(K')} p \not \in \conv (K' \cup (-K')) \setminus (K' \cup (-K'))$, 
and therefore that either $\frac{1}{\alpha(K')} p$ belongs to $\bd(K')$ or $\bd(-K')$. 
Thus, by our cutting-offs of $K$ and $-K$ above, 
\[
\frac{1}{\alpha(K')} p \in \bd(\conv (K' \cup (-K'))) \cap (\bd(K') \cup \bd(-K'))= \{ \pm sp^1, \pm sp^2, \pm sp^3\}.
\]

Since by \eqref{ineq:alpha_s} $\frac{1}{\alpha(K')}<s$, this implies that one of the points $\pm p^1,\pm p^2, \pm p^3$ is not in $K'\cap(-K')\cap\alpha(K')(\bd(\conv(K'\cup(-K'))))$, a contradiction. Thus, $p \in \bd(K') \cap \bd(-K')$,
and hence we may assume w.l.o.g.~in the following that $p:= z^{3,2}$ and $-p:= z^{2,1}$. 

\item
Notice that, because of $0 \in \conv(\{p^1,p^2,p^3\})$ and the way we have chosen the points $-p, p$, we know that $p^1$ is located on the one side, while $p^2, p^3$ are on the other side of $\aff(\{-p,p\})$. Moreover, $p \in \pos\{p^1,p^2\}$ and $-p \in \pos\{p^1,p^3\}$ and again, by Lemma \ref{lem:Charact_Opt_Means}, there exists a pair of halfspaces $H^{\le}_{\pm u, \frac{1}{\alpha(K')} \rho}$ supporting $\conv(K'\cup(-K'))$ in $\pm \frac{1}{\alpha(K')} p$.

Hence, $-sp^2, -sp^3 \in H^{\le}_{u, \frac{1}{\alpha(K')} \rho} \cap  H^{\le}_{-u, \frac{1}{\alpha(K')} \rho}$ and, because $\frac{1}{\alpha(K')} < s$, this implies $p^2, p^3 \in H^{<}_{-u, \rho} \cap H^{<}_{u, \rho}$.
Together with $p \in  H^{=}_{u, \rho}$, 
we obtain the existence of some $d^1 \in \aff (\{-p,p^3\}) \cap \aff (\{p,p^2\})$. Moreover, since $p^1, -sp^2, -sp^3 \in \bd(K')$ with $p^1 \in \pos(\{-sp^2,-sp^3\})$, we have $p^1 \not \in \inter( \conv (\{0,-sp^2,-sp^3\}))$.

Now, we show $-sp^1 \in \conv(\{p^2,p^3,d^1\}) \setminus [p^2,p^3]$.

Since $0 \in \conv(\{p^1,p^2,p^3\})$, we have $-sp^1 \in \pos(\{p^2,p^3\})$, and since
all the points $-p, p, p^2, p^3$ belong to $\bd(K')$, with $p^3 \in \pos(\{-p,-sp^1\})$, and $p^2 \in \pos(\{p,-sp^1\})$, we obtain $-sp^1 \in \conv(\{-p,p,d^1\})$. Finally, $-sp^1 \in \bd(K')$ and $s>1$ imply $-sp^1 \not \in \conv(\{-p,p,p^2,p^3\})$.

Thus, combining the results from above, we obtain
\begin{equation} \label{eq:valid-sit}
-sp^1 \in \conv(\{p^2,p^3,d^1\}) \setminus [p^2,p^3] 
 \quad \text{and} \quad p^1 \not \in \inter( \conv (\{0,-sp^2,-sp^3\})).
\end{equation}
In the following, we say that a well-spread triple of asymmetry points $p^1,p^2,p^3$ presents a \emph{valid situation}, if they satisfy \eqref{eq:valid-sit}. 

Notice that since $K'$ is of the form \eqref{eq:K'}, we also have $[-sp^2,-p],  [p,-sp^3] \subset \bd(K')$.

The next part of the proof describes a three-step transformation of $K'$ into $\bar K$, such that $s(\bar K)=s$, $p \in \bd(\bar K \cap (- \bar K)) \cap \alpha(K') \, \left(\bd(\conv(\bar K \cup (-\bar K))\right)$, and $\bar p^1,p^2,p^3 \in \bd(\bar K) \cap \bd\left(-\frac{1}{s} \bar K \right)$ is a well spread triple of asymmetry points that presents a valid situation.

\begin{figure}[ht]
 \begin{subfigure}[b]{0.47\textwidth}
    \centering
	\begin{tikzpicture}[scale=3]
	\draw [thick, dashed] (-1,0)  -- (1,0); 
	\draw [thick] (-1, -2) -- (-1,1.5); 
	\draw [thick] (1, -2) -- (1,1.5); 
	\draw [thick] (0.8, -2) -- (0.8,1.5); 
	\draw [thick] (-0.8, -2) -- (-0.8,1.5); 
  \draw[->] [thick] (1, 1.3) -- (1.2, 1.3) node[right] {$u$};
	\draw [thick] (-1,0.7) -- (1,0.93); 
	\draw [thick, dashed, gray] (0.15,0.9) -- (-0.23,-1.38); 
	\draw [thick, dashed, gray] (-0.59,-0.53) -- (1,0.93); 
	\draw [thick, dashed, gray] (0.61,-0.42) -- (-1,0.7); 
	\draw [thick]  (-1,0.7)--(1,0.93); 
	\draw [thick]  (-1,0.7)--(-0.8,0)--(-0.08,-1.9)--(0.8,0)--(1,0.93);
    \draw [thick]  (0.6,-0.42)--(-0.59,-0.54);
	
	
	\draw [fill] (0,0) circle [radius=0.02];
	\draw [fill] (-1,0) circle [radius=0.02]; 
	\draw [fill] (1,0) circle [radius=0.02]; 
	\draw [fill] (-0.8,0) circle [radius=0.02]; 
	\draw [fill] (0.8,0) circle [radius=0.02]; 
	\draw [fill] (1,0.93) circle [radius=0.02]; 
	\draw [fill] (-1,0.7) circle [radius=0.02]; 
	\draw [fill] (0.15,0.9) circle [radius=0.02]; 
    \draw [fill] (-0.23,-1.38) circle [radius=0.02]; 
	\draw [fill] (-0.59,-0.54) circle [radius=0.02]; 
	\draw [fill] (0.6,-0.42) circle [radius=0.02]; 
	\draw [fill] (-0.08,-1.9) circle [radius=0.02]; 
	
	\draw (-0.16,-0.07) node {$0$};
	\draw (0.7,0.07) node {$p$};
	\draw (-0.7,0.07) node {$-p$};
	\draw (1.15,0.07) node {$\frac{1}{\alpha}p$};
	\draw (-1.15,0.07) node {$-\frac{1}{\alpha}p$};
	\draw (0.7,-0.4) node {$p^2$};
	\draw (-0.72,-0.5) node {$p^3$};
	\draw (0.1,-1.9) node {$d^1$};
	\draw (0.12,0.95) node {$p^1$};
	\draw (-0.43,-1.5) node {$-sp^1$};
     \draw (-1.17,0.7) node {$-sp^2$};
	\draw (1.15,1.0)  node {$-sp^3$};
\end{tikzpicture}
  \end{subfigure} \hfill
  \begin{subfigure}[b]{0.47\textwidth}
    \centering
 	\begin{tikzpicture}[scale=3]
	\draw [thick, dashed] (-1,0)  -- (1,0); 
	\draw [thick] (-1, -2) -- (-1,1.5); 
	\draw [thick] (1, -2) -- (1,1.5); 
	\draw [thick] (0.8, -2) -- (0.8,1.5); 
	\draw [thick] (-0.8, -2) -- (-0.8,1.5); 

 \draw[->] [thick] (1, 1.3) -- (1.2, 1.3) node[right] {$u$};
	\draw [thick] (-1,0.7) -- (1,0.93); 
	\draw [thick, dashed, gray] (0.03,0.82) -- (-0.08,-1.9); 
	\draw [thick, dashed, gray] (-0.59,-0.53) -- (1,0.93); 
	\draw [thick, dashed, gray] (0.61,-0.42) -- (-1,0.7); 
	\draw [thick]  (-1,0.7)--(1,0.93); 
    \draw [fill=lgold, fill opacity=0.7]  (-1,0.7)--(-0.8,0)--(-0.59,-0.54)--(-0.05,-1.3)--(0.6,-0.42)--(0.8,0)--(1,0.93)--(-1,0.7);

 \draw [thick, dred]  (-1,0.7)--(-0.8,0)--(-0.08,-1.9)--(0.8,0)--(1,0.93)--(-1,0.7);
	
	
	\draw [fill] (0,0) circle [radius=0.02];
	\draw [fill] (-1,0) circle [radius=0.02]; 
	\draw [fill] (1,0) circle [radius=0.02]; 
	\draw [fill] (-0.8,0) circle [radius=0.02]; 
	\draw [fill] (0.8,0) circle [radius=0.02]; 
	\draw [fill] (1,0.93) circle [radius=0.02]; 
	\draw [fill] (-1,0.7) circle [radius=0.02]; 
	\draw [fill] (0.03,0.82) circle [radius=0.02]; 
	\draw [fill] (-0.59,-0.54) circle [radius=0.02]; 
	\draw [fill] (0.6,-0.42) circle [radius=0.02]; 
	\draw [fill] (-0.08,-1.9) circle [radius=0.02]; 
	\draw [fill] (-0.05,-1.3) circle [radius=0.02]; 
 
	\draw (-0.16,-0.07) node {$0$};
	\draw (0.7,0.07) node {$p$};
	\draw (-0.7,0.07) node {$-p$};
	\draw (1.15,0.07) node {$\frac{1}{\alpha}p$};
	\draw (-1.15,0.07) node {$-\frac{1}{\alpha}p$};
	\draw (0.7,-0.4) node {$p^2$};
	\draw (-0.72,-0.5) node {$p^3$};
    \draw (0.12,-1.2) node {$-sp^1$};
	\draw (0.12,-1.9) node {$d^1$};
	\draw (0.1,0.95) node {$p^1$};
	\draw (-1.17,0.7) node {$-sp^2$};
	\draw (1.15,1.0)  node {$-sp^3$};
\end{tikzpicture}
  \end{subfigure}
  \caption{Construction used in the proof of Lemma \ref{lem:beforePlanarCase}b). On the left: the construction before applying the three-step transformation and on the right the set after the transformation (yellow). In red: $K_{\min,s}$ (see definition before Lemma \ref{lem:PlanarCase_equality}).} 
  \label{fig:two-step-trafo}
\end{figure}
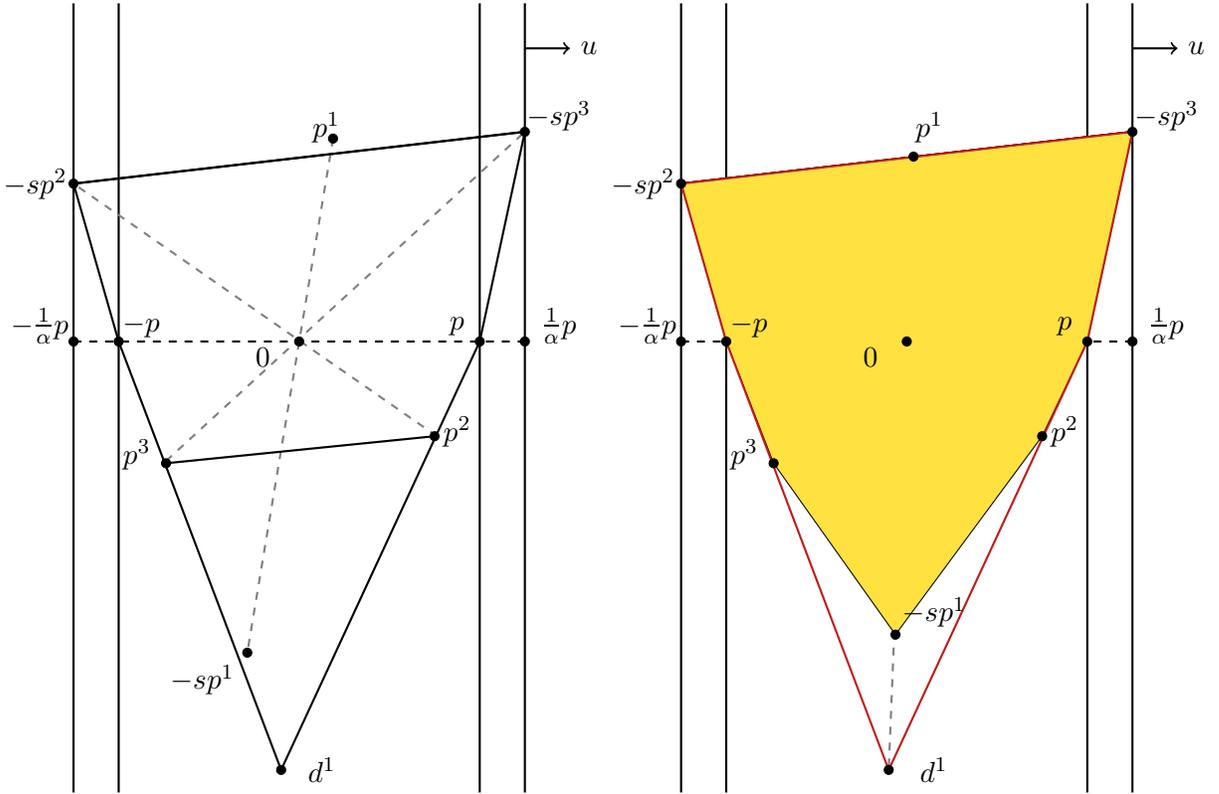

\begin{enumerate}[Step 1]
\item Replace $p^1$ by $\widetilde{p}^1:=\mu p^1$, for some $\mu \le 1$, such that $\widetilde{p}^1 \in[-sp^2,-sp^3]$. 

Obviously, $\widetilde{p}^1 \not \in \inter( \conv (\{0,-sp^2,-sp^3\}))$, and since $\mu<1$ and $s>1$, we obtain directly from $-sp^1 \in \conv(\{p^2,p^3,d^1\})$ that also $-s \widetilde{p}^1 \in \conv(\{p^2,p^3,d^1\}) \setminus [p^2,p^3]$. Hence, the points $\widetilde{p}^1, p^2, p^3$ stay to present a valid situation.

\item 
Replace $\widetilde{p}^1$ by $\bar p^1 := -\gamma d^1\in[-sp^2,-sp^3]$, for some $\gamma > 0$. 

Obviously again, $\bar{p}^1 \not \in \inter( \conv (\{0,-sp^2,-sp^3\}))$. Moreover, since replacing $\widetilde{p}^1$ by $\bar{p}^1$ means moving $-s\widetilde{p}^1$ onto $-s\bar{p}^1$ parallel to $[p^2,p^3]$ such that it belongs to $\lin\{d^1\}$. Thus, $s \gamma d^1 = -s\bar p^1\in \conv(\{d^1,p^2,p^3\}) \setminus [p^2,p^3]$, again. In particular, we obtain $s \gamma \leq 1$, which shows that the points $\bar{p}^1, p^2, p^3$ stay to present a valid situation.
\item 
Replace $K'$ by $\bar K:=\conv(\{\pm p,p^2,p^3,-s \bar p^1,-sp^2,-sp^3\})$. 

The points $\bar p^1 ,p^2, p^3$ still form a well-spread triple of asymmetry points of $\bar K$, which implies also $s(\bar K)=s$. Moreover, we still have $p\in\bd(\bar K\cap(-\bar K))\cap \alpha(K')\left(\bd(\conv(\bar K\cup(-\bar K)))\right)$, which implies that $\alpha(\bar K) \ge \alpha(K')$. 
\end{enumerate}
\end{enumerate}
\end{proof}
Let us explicitly mention, that we do not show (since not needed and possibly false in some cases) that $\alpha(K) = \alpha(\bar K)$ is always true.

\begin{lemma}\label{lem:PlanarCase}
Let $K\in\mathcal K^2$ be Minkowski centered such that $s(K)>\varphi$. Then 
\begin{equation}\label{eq:bound_alpha}
    \alpha(K)\leq \frac{s(K)}{s(K)^2-1}.
\end{equation}
\end{lemma}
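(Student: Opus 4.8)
The plan is to invoke Lemma \ref{lem:beforePlanarCase} to replace $K$ by the explicitly described polytope $\bar K$, and then to read the bound off a single concrete inequality. Write $s:=s(K)>\varphi$ and $\alpha':=\alpha(K')$, where $K'$ is the intermediate body of Lemma \ref{lem:beforePlanarCase}a). Since that lemma supplies $\alpha(\bar K)\ge\alpha'\ge\alpha(K)$, it suffices to show $\alpha'\le\frac{s}{s^2-1}$; then $\alpha(K)\le\alpha'\le\frac{s}{s^2-1}$. From the construction I also keep: a well-spread triple $\bar p^1,p^2,p^3$ of asymmetry points of $\bar K$ with $\bar p^1=-\gamma d^1\in[-sp^2,-sp^3]$ and $s\gamma\le 1$, where $d^1=\aff(\{p,p^2\})\cap\aff(\{-p,p^3\})$; and a touching point $p$ with $\frac1{\alpha'}p\in\bd(\conv(\bar K\cup(-\bar K)))$, at which $\bar K$ and $-\bar K$ share a supporting line (the moreover part of Lemma \ref{lem:Charact_Opt_Means}).

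Using the linear invariance of $\alpha$ (Proposition \ref{prop:old_results}(i)), I would normalize so that this common supporting line becomes $\{x=1\}$ and $p$ lies on the positive $x$-axis; then $\frac1{\alpha'}p=(1,0)$, whence $\alpha'=p_x=:a$. In this frame the line $\{x=1\}$ touches $\bar K$ at $-sp^3$ and $-\bar K$ at $sp^2$, so $-sp^3=(1,c_3)$ and $sp^2=(1,-c_2)$ for some $c_2,c_3>0$, giving $p^2=(\frac1s,-\frac{c_2}s)$ and $p^3=(-\frac1s,-\frac{c_3}s)$. A direct computation of the intersection point then yields
\[
d^1=\left(a\,\frac{c_2-c_3}{c_2+c_3},\ -\frac{2a\,c_2c_3}{(sa-1)(c_2+c_3)}\right),
\]
where $sa-1>0$ because $a=\alpha'>\frac1s$. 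Intersecting the ray through $-d^1$ with the segment $[-sp^2,-sp^3]$ determines $\gamma$ explicitly, and the constraint $s\gamma\le 1$ rearranges to
\[
s\,(c_2+c_3)^2+a\,(c_3-c_2)^2\le\frac{4a\,c_2c_3}{sa-1}.
\]

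Writing $P=c_2+c_3$ and $Q=c_2c_3$, this reads $(s+a)P^2\le\frac{4a^2sQ}{sa-1}$. The final step is to eliminate the free heights $c_2,c_3$: since $P^2\ge 4Q$ by AM--GM and $s+a>0$, I may bound the left side below by $(s+a)\,4Q$, cancel $4Q>0$, and obtain $(s+a)(sa-1)\le a^2s$, which simplifies to $a(s^2-1)\le s$, i.e.\ $\alpha'=a\le\frac{s}{s^2-1}$. This closes the argument.

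I expect the main obstacle to be twofold. First, the bookkeeping behind the normalization: one must be sure that, in the chosen frame, the common supporting line really passes through the asymmetry-point images $-sp^3$ and $sp^2$ (so that the coordinates of $p^2,p^3$ are as claimed) and that $\frac1{\alpha'}p=(1,0)$ genuinely forces $\alpha'=p_x$; this rests on combining the support structure of Lemma \ref{lem:Charact_Opt_Means} with the specific configuration of Lemma \ref{lem:beforePlanarCase}, and one cannot simultaneously normalize the top edge to be horizontal. Second, the inequality depends a priori on the two heights $c_2,c_3$, and the crux is that the single convexity-type estimate $P^2\ge 4Q$ suffices to reduce it to the desired $s$-only bound, with equality occurring exactly in the centrally balanced case $c_2=c_3$ (equivalently $s\gamma=1$).
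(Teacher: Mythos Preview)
Your proof is correct and follows the same route as the paper: reduce via Lemma~\ref{lem:beforePlanarCase} to the explicit polytope $\bar K$, normalize so that the common supporting line of $\pm K'$ at $\frac{1}{\alpha'}p$ becomes $\{x=1\}$ with $p$ on the $x$-axis, read off $p^2=(1/s,-c_2/s)$ and $p^3=(-1/s,-c_3/s)$ from the fact that the edge $[sp^2,-sp^3]$ of the hexagon $\conv(K'\cup(-K'))$ lies on this line, compute $d^1$ and $\gamma$, and exploit $s\gamma\le 1$.

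The one genuine variation is the endgame. The paper spends its last normalization freedom to set $c_3=s$ (i.e.\ $p^3=(-1/s,-1)$), leaving a single parameter $a=c_2/s\in(0,1]$; it then solves the quadratic constraint for the larger root $s(a,\alpha)$, checks that $s(\cdot,\alpha)$ is increasing on $(0,1]$, and evaluates at $a=1$. You instead keep both heights, rewrite the constraint as $(s+a)P^2\le \frac{4a^2sQ}{sa-1}$ with $P=c_2+c_3$, $Q=c_2c_3$, and kill both parameters in one stroke via $P^2\ge 4Q$. This is cleaner algebraically and makes the equality case $c_2=c_3$ (the paper's $a=1$, hence $s\gamma=1$) transparent; the paper's version, in exchange, gives the explicit formula $s(a,\alpha)=\tfrac12\big(\tfrac1\alpha-\alpha(\tfrac{a-1}{a+1})^2+\sqrt{4+(\alpha(\tfrac{a-1}{a+1})^2-\tfrac1\alpha)^2}\big)$, which feeds directly into the equality analysis in Lemma~\ref{lem:PlanarCase_equality}. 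Your caveats about the normalization are well placed but are handled just as in the paper: a shear fixing $\{x=1\}$ brings $p$ to the $x$-axis, and $sa-1>0$ follows from $\alpha'\ge\alpha(K)\ge\frac{2}{s+1}>\frac1s$.
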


\begin{proof}

In order to show \eqref{eq:bound_alpha}, we determine for any fixed $\alpha \in \left[\frac23,1\right]$ below the maximal $s=s(\alpha)$, such that there exists a Minkowski centered $K \in \K^2$  
with $\alpha(K) = \alpha$ and $s(K)=s$. This way we show that $s \le \frac1{2\alpha} +\sqrt{1+\frac1{4\alpha^2}}$, which is equivalent to $\alpha \le \frac{s}{s^2-1}$.

Considering $K'$ to be defined as in Lemma \ref{lem:beforePlanarCase}, we have $\alpha(K) \leq \alpha(K')$ and $s(K)=s(K')$. Hence, if $K'$ fulfills \eqref{eq:bound_alpha}, then 
\[
\alpha(K) \leq \alpha(K') \leq \frac{s(K')}{s(K')^2-1}= \frac{s(K)}{s(K)^2-1}, 
\]
i.e., $K$ also fulfills \eqref{eq:bound_alpha}. Thus, it suffices to show \eqref{eq:bound_alpha} for $K'$. 

In the following, instead of directly showing \eqref{eq:bound_alpha} for $K'$, we simplify the task by considering $\bar K$ as given in Lemma \ref{lem:beforePlanarCase}.
By Lemma \ref{lem:beforePlanarCase} b) we have $s(K')=s(\bar K)$ and $p \in \bd(\bar K \cap (-\bar K)) \cap \alpha(K') (\bd (\conv(\bar K \cup (-\bar K))))$. Thus, we can study the maximal possible value for $s:=s(K)=s(\bar K)$, while keeping $p \in \bd(\bar K \cap (-\bar K)) \cap \alpha(K') \left(\bd(\conv(\bar K \cup (-\bar K))\right)$.


Observe that from the convexity of $\bar K$ 
we directly obtain that $s \gamma  \leq 1$. Thus, we want to characterize the situation, in which $s$ becomes maximal, under the condition $s\gamma\leq 1$. To do this, we compute the explicit value of $\gamma$ in dependence of $s$. 

By Lemma \ref{lem:Charact_Opt_Means}, there exists a pair of halfspaces $H^{\le}_{\pm u, \frac1\alpha \rho}$ supporting $\conv(\bar K\cup(-\bar K))$ in $\pm \frac1\alpha p$. In the following, we assume w.l.o.g.~that
$p=\smallmat{\alpha \\ 0}$ and, since we may choose the above hyperplanes to be orthogonal to $[-p,p]$, $u=\smallmat{1\\0}$ and $\rho = \alpha$.

Remembering that we have $\conv(\bar K\cup(-\bar K)) = \conv(\{\pm sp^1,\pm sp^2, \pm sp^3\})$ and that, following the notations from Lemma \ref{lem:beforePlanarCase}, we assumed $p=z^{3,2}$, $-p=z^{2,1}$, 
we see that $\frac{1}{\alpha} p \in [sp^2,-sp^3]$ and $-\frac{1}{\alpha} p \in [-sp^2,sp^3]$. Hence,
$sp^2 \in H^{\le}_{\smallmat{1 \\ 0} , 1}$ and $sp^3 \in H^{\le}_{\smallmat{-1 \\ 0} , 1}$
and therefore the first coordinates of $p^2$ and $p^3$ are $\frac 1 s$ and $-\frac 1 s$, respectively. Knowing this fact and remembering that $p^2,p^3$ are located on the same side of $\aff\{-p,p\}$,
we may further assume that $p^2=\smallmat{1/s \\ -a}$ and $p^3=\smallmat{-1/s \\ -1}$ for some $a \in (0,1]$.

Let $d^2$ denote the intersection point of $H^{=}_{\smallmat{-1 \\ 0} , 1}$ and $\aff (\{-p,p^3\})$. 
Hence,  $d^2=-p + \mu(-p+p^3) = \smallmat{-\alpha \\ 0} + \mu \smallmat{-\alpha+\frac 1 s \\ 1}$ for some $\mu>0$ and we directly see that $d^2_2 = \mu$. Now, since we have $d^2_1=-1$ we obtain
$d^2_2 = \frac{s(1-\alpha)}{s\alpha-1}$. 
Altogether, 
\[
d^2= \smallmat{-1 \\ \frac{s(1-\alpha)}{s\alpha-1}}.
\]
By definition, $d^2$ should be in $[-\frac 1 \alpha p, -sp^2]$, which implies, because of $-sp^2 = \smallmat{-1 \\ sa}$, that $\frac{s(1-\alpha)}{s\alpha-1} \leq sa$ or, equivalently, $a \geq \frac{1-\alpha}{s\alpha-1}$.

Now, we calculate the coordinates of $d^1$ as the intersection point of the lines $\aff\{p,p^2\}$ and $\aff\{-p,p^3\}$. We obtain that those coordinates satisfy the following system of equations: 
\begin{align*}
d^1_2 &= \frac{s a}{s \alpha-1} d^1_1 - \frac{s a\alpha}{s \alpha-1},\\ 
d^1_2 &= \frac{-s}{s\alpha-1} d^1_1 - \frac{s \alpha}{s\alpha-1}.
\end{align*}

Solving, 
gives us 
\[
d^1=\begin{pmatrix}
\frac{\alpha(a-1)}{a+1} \\ \frac{-2sa\alpha}{(s\alpha-1)(a+1)} 
\end{pmatrix}.
\]
Now, we compute $\gamma$ such that
\[
-\gamma d^1\in[-sp^2,-sp^3]=\left[\begin{pmatrix} -1\\ sa\end{pmatrix},\begin{pmatrix} 1\\ s\end{pmatrix}\right]. 
\]
This means we are looking for some $\lambda\in[0,1]$, such that
\[
-\gamma \begin{pmatrix} \frac{\alpha(a-1)}{a+1}\\ \frac{-2sa\alpha}{(s\alpha-1)(a+1)}\end{pmatrix}
=(1-\lambda) \begin{pmatrix} -1\\ sa\end{pmatrix}+\lambda
\begin{pmatrix} 1\\ s\end{pmatrix}=
\begin{pmatrix} -1+2\lambda\\ s((1-\lambda)a+\lambda)\end{pmatrix},
\]
and it is easy to verify that this implies
\[
\gamma^{-1}=\frac{2\alpha}{(a+1)^2}\left( \frac{2a}{s \alpha-1} - \frac{(a-1)^2}{2}\right).
\]
One may check that, since $a \geq \frac{1-\alpha}{s\alpha-1}$, we have $\gamma^{-1} \geq 0$.


Thus, finding the maximal $s$ with $s\gamma\leq 1$ 
may now be rewritten as
\[
\max s, \text{ such that } s \leq \frac{-2\alpha}{(a+1)^2}\left( \frac{2a}{1-s \alpha}+ \frac{(a-1)^2}{2}\right),
\]
which can be transformed into
\[
s^2+\left(\alpha\left(\frac{a-1}{a+1}\right)^2-\frac 1 \alpha \right)s-1 \le 0.
\]
We are interested in the maximal $s$, i.e., in the larger of the two roots of the quadratic on the left, which is (independently of $\alpha$ and $a$)
\[
s=\frac{1}{2} \left(\frac{1}{\alpha}-\alpha \left(\frac{a-1}{a+1} \right)^2+ \sqrt{4+\left(\alpha\left(\frac{a-1}{a+1} \right)^2-\frac{1}{\alpha}\right)^2} \right)=:s(a,\alpha).
\]
Hence, we obtain the maximal $s$ (in dependence of $\alpha$) from maximizing $s(a,\alpha)$ over $a$, where $\frac{1-\alpha}{s\alpha-1} \le a \le 1$. It is straightforward to verify that $s(\cdot,\alpha)$ is increasing in $(0,1]$. We conclude 
\[
\max\,s=s(1,\alpha)=\frac1{2\alpha} +\sqrt{1+\frac1{4\alpha^2}}.
\]

\end{proof}

For any fixed $s \in (\varphi,2]$ we define 
\begin{align*}
K_{min,s}&:=\conv\left(\left\{\begin{pmatrix} \pm \frac{s}{s^2-1}\\ 0\end{pmatrix}, \begin{pmatrix} 0\\ -s^2\end{pmatrix},\begin{pmatrix} \pm 1 \\ s\end{pmatrix}\right\}\right) \quad \text{and} \\
K_{max,s}&:=\conv\left(\left\{\begin{pmatrix} \pm 1\\ s(s^2-s-1)\end{pmatrix}, \begin{pmatrix} 0\\ -s^2\end{pmatrix},\begin{pmatrix} \pm 1\\ s\end{pmatrix}\right\}\right).
\end{align*}

The following lemma deals with the equality case of Lemma \ref{lem:PlanarCase}. 
\begin{lemma}\label{lem:PlanarCase_equality}
Let $K\in\mathcal K^2$ be Minkowski centered such that $s(K)>\varphi$ and $\alpha(K)=\frac{s(K)}{s(K)^2-1}$. Then there exists a non-singular linear transformation $L$ such that
\[
K_{min,s(K)} \subset L(K) \subset K_{max,s(K)}.
\]
\end{lemma}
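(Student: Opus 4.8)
The plan is to exploit the fact that $\alpha(K)=\frac{s}{s^2-1}$ is exactly the extremal value of Lemma \ref{lem:PlanarCase} in order to rigidify the position of $K$: I will show that after a suitable linear map $L$ all asymmetry points, together with the crossing point realizing $\alpha(K)$, sit at forced coordinates, that the lower boundary of $L(K)$ is forced to coincide with the straight edges of $K_{min,s}$, and that the only remaining freedom (an outward bulge on the upper right/left) is trapped inside $K_{max,s}$.

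\textbf{Normalization.} Writing $s:=s(K)$ and $\alpha:=\frac{s}{s^2-1}$, I first invoke the proof of Lemma \ref{lem:PlanarCase}: since $s=\tfrac1{2\alpha}+\sqrt{1+\tfrac1{4\alpha^2}}$ is attained only at the maximizer $a=1$ of $s(\cdot,\alpha)$, equality forces $a=1$. Taking $L$ as in that proof and the labels from Lemma \ref{lem:beforePlanarCase}, I may assume $p=z^{3,2}=\smallmat{\alpha\\0}\in\bd(K)\cap\bd(-K)$, with $H^=_{\smallmat{1\\0},1}$ supporting $\conv(K\cup(-K))$ at $\tfrac1\alpha p=\smallmat{1\\0}$ (Lemma \ref{lem:Charact_Opt_Means}), and well-spread asymmetry points $p^2=\smallmat{1/s\\-1}$, $p^3=\smallmat{-1/s\\-1}$, $p^1=\smallmat{0\\h}$. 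The support line yields $K\subset\{|x|\le 1\}$, and $-sp^1=\smallmat{0\\-sh}$, $-sp^2=\smallmat{-1\\s}$, $-sp^3=\smallmat{1\\s}$ all lie on $\bd(K)$.

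\textbf{The crux: $h=s$.} Because $p$ is a crossing point, $p\in\bd(K)$ forces the right boundary $x_R(y)$ of $K$ to satisfy $x_R(0)=\tfrac{s}{s^2-1}$ (no outward bulge at $y=0$); and since $x_L(-1)\le -1/s<1/s$, the point $p^2$ cannot be interior, so $p,p^2,-sp^1$ all lie on the graph of $x_R$, with $-p,p^3,-sp^1$ on the left boundary by the mirror argument. Concavity of $x_R$ then compares the slope $\tfrac1{s(sh-1)}$ below $p^2$ with the slope $\tfrac1{s(s^2-1)}$ above it, giving $sh-1\le s^2-1$, i.e. $h\le s$. Conversely the chord $[-sp^2,-sp^3]\subset K$ lies at height $s$ and passes through $\smallmat{0\\s}$, so the topmost point of $K$ on the $y$-axis, namely $p^1$, has $h\ge s$. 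Hence $h=s$, $-sp^1=\smallmat{0\\-s^2}$, and on $[-s^2,0]$ the concave function $x_R$ touches its chord at the interior point $p^2$, so it equals the straight segment of slope $\tfrac1{s(s^2-1)}$ through $p,p^2,-sp^1$; the left boundary is its mirror image.

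\textbf{The two inclusions.} The lower bound is then immediate: $\pm p$, $-sp^1=\smallmat{0\\-s^2}$, $-sp^2$, $-sp^3\in K$, so $K_{min,s}=\conv\{\pm p,-sp^1,-sp^2,-sp^3\}\subset L(K)$. For the upper bound I first exclude an upward bulge: $p^1=\smallmat{0\\s}\in\bd(K)$ together with $-sp^2,-sp^3\in\bd(K)$ at height $s$ and concavity of the upper boundary force it to be constant equal to $s$ on $[-1,1]$, whence $K\subset\{y\le s\}$. Finally, the left slope of $x_R$ at $y=0$ is exactly $\tfrac1{s(s^2-1)}$, so concavity gives $x_R(y)\le \tfrac{s^2+y}{s(s^2-1)}$ for $y\ge 0$; combined with $x_R\le 1$ this is precisely the right boundary of $K_{max,s}$ (which meets $x=1$ at height $s(s^2-s-1)$), and the mirror estimate handles the left boundary, so $L(K)\subset K_{max,s}$.

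\textbf{Main obstacle.} I expect the establishment of $h=s$ and the exclusion of bulges to be the genuine difficulty: one must extract from convexity alone, fed by the crossing property of $p$ and the boundary membership of $p^1,p^2,p^3,-sp^i$ forced by equality, the rigidity that $p^1$ sits exactly at height $s$ and that $K$ neither bulges past $p$ at $y=0$ nor above height $s$. Once the lower boundary is identified with the straight $K_{min,s}$-edges, the containment in $K_{max,s}$ follows from a single concavity estimate, so the whole weight of the proof lies in this rigidity step.
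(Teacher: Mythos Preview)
Your approach is genuinely different from the paper's: you attempt a direct convexity analysis of $K$ itself, whereas the paper argues indirectly by tracing back through the auxiliary bodies $K'$ and $\bar K$ of Lemma~\ref{lem:beforePlanarCase}. It first shows $\bar K=K_{min,s}$ from $a=1,\gamma=1/s$, then proves Steps~1--3 must be trivial so that $K'=K_{min,s}$ as well, and only afterwards analyses how much $K$ can differ from $K'$.

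There is, however, a real gap in your normalization step. The linear map from the proof of Lemma~\ref{lem:PlanarCase} is pinned down by sending $p$ to $\smallmat{\alpha\\0}$, forcing the supporting line at $\tfrac1\alpha p$ to be vertical, and scaling the second coordinate so that $p^3=\smallmat{-1/s\\-1}$; the equality $a=1$ then yields $p^2=\smallmat{1/s\\-1}$. At this point all four degrees of freedom of $L$ are exhausted, and nothing forces the original asymmetry point $p^1$ onto the $y$-axis. But your entire ``crux'' hinges on $p^1=\smallmat{0\\h}$: the inequality $h\le s$ uses that $-sp^1=\smallmat{0\\-sh}$ lies on the graph of $x_R$ (indeed is the bottom vertex of $K$), and the inequality $h\ge s$ uses that $p^1$ is the top boundary point of $K$ on the $y$-axis. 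If $p^1=\smallmat{c\\h}$ with $c\ne 0$, then $-sp^1$ sits on only one of the two lateral boundaries and the three-point concavity comparison through $p,p^2,-sp^1$ is unavailable on the other side; the argument collapses.

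In the paper this is not an assumption but a conclusion: once $K'=K_{min,s}$ is established, the shared well-spread triple $p^1,p^2,p^3$ of $K$ and $K'$ must coincide with the asymmetry points of $K_{min,s}$, and the trace-back through Steps~1--2 gives $p^1=\tilde p^1=\bar p^1=\smallmat{0\\s}$. Your route would need an independent argument (not using $K'$) that places $p^1$ on the $y$-axis and identifies $-sp^1$ as the bottom vertex of $K$ before the convexity estimates can be applied; as written, that step is missing.
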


\begin{proof}
We know from the proof of the previous lemma that $s:=s(K)>\varphi$ and $\alpha:=\alpha(K)=\frac{s}{s^2-1}$ imply $\alpha\in[\frac23,1]$ and $s = \frac{1+\sqrt{1+4\alpha^2}}{2\alpha}$. Moreover,
equality holds in the maximization process of Lemma \ref{lem:PlanarCase} if and only if $a=1$, $\gamma=\frac{1}{s}$. 
If we use these values in the respective formulas above, we obtain for the set $\bar K$ from Lemma \ref{lem:beforePlanarCase} that
\[
\bar K:=\conv\left(\left\{ \begin{pmatrix} \pm \alpha \\ 0\end{pmatrix},\begin{pmatrix} \pm \frac 1 s \\ -1\end{pmatrix},\begin{pmatrix} 0 \\ -s^2 \end{pmatrix},\begin{pmatrix} \pm 1\\ s\end{pmatrix}\right\}\right). 
\]
However, since $\smallmat{\pm 1/s \\ -1} = \frac1 {s^2} \smallmat{0 \\ -s^2} + (1 - \frac 1{s^2}) \smallmat{ \pm s/(s^2-1) \\ 0}$, we essentially have $\bar K = K_{min,s}$.

Thus, we have shown that the only Minkowski centered convex body with $s(K)>\varphi$ and $\alpha(K)=\frac{s(K)}{s(K)^2-1}$ of the form given in Lemma \ref{lem:beforePlanarCase}b) is $K_{min,s}$.

Next, we show that the only Minkowski centered convex body $K'$ of the form \eqref{eq:K'} with $s(K')>\varphi$ and $\alpha(K')=\frac{s(K')}{s(K')^2-1}$ is still $K_{min,s}$. Notice that before Step 3 of Part (b) in Lemma \ref{lem:beforePlanarCase}, both segments, $[-sp^2,-p]$ and $[-sp^3,p]$ already belong to $\bd(K')$ and that from the previous paragraph the lines $\aff(\{-sp^1,p\})$, $\aff(\{-sp^1,-p\})$, $\aff(\{-sp^2,-sp^3\})$ support $K'$. Thus, $K'$ must equal $K_{min,s}$ also before Step 3. In Step 2, nothing can change (as otherwise $-sp^1$ would lie outside $K'$) and the same holds true in Step 1 (as only for $\mu=1$ we have $-sp^1 \in K'$).

Finally, we investigate the freedom in the design of $K$ before one applies the transformation \eqref{eq:K'}. Let $d^2 = \smallmat{-1 \\ s(s^2-s-1)}$ denote the intersection point of $H^{=}_{\smallmat{-1 \\ 0} , 1}$ and $\aff (\{-p,p^3\})$, as in Lemma \ref{lem:PlanarCase}. Moreover, let $d^3$ be defined, s.t.~ $\{d^3\} = H^{=}_{\smallmat{1 \\ 0} , 1} \cap \aff (\{p,p^2\})$. 
The only possible freedom we have in choosing the original set $K$ is to replace the linear boundary parts $[p,-sp^3]$ and $[-p-sp^2]$, s.t.~$\bd(K) \cap \pos(\{-sp^3,p\}) \subset \conv(\{-sp^3,p, d^3\})$ and $\bd(K) \cap \pos(\{-sp^2,-p\}) \subset \conv(\{-sp^2,-p, d^2\})$, respectively (c.f.~Figure \ref{fig:K_before_transformation}).
However, since we need to ensure that $s(K)=s$, we also have to fulfill 
\begin{align*}
    \bd(K) \cap \pos(\{-sp^3,p\}) &\not \subset \inter( \conv(\{0,-sp^3,p\})) \quad \text{and} \\
    \bd(K) \cap \pos(\{-sp^2,-p\}) &\not \subset \inter( \conv(\{0,-sp^2,-p\})). 
\end{align*}

\begin{figure}[ht]
    \centering
	\begin{tikzpicture}[scale=2.5]
	\draw [thick, dashed] (-1,0)  -- (1,0); 
	\draw [thick] (-1, -2) -- (-1,1.5); 
	\draw [thick] (1, -2) -- (1,1.5); 
	\draw [thick] (0.8, -2) -- (0.8,1.5); 
	\draw [thick] (-0.8, -2) -- (-0.8,1.5); 
  \draw[->] [thick] (1, 1.3) -- (1.2, 1.3) node[right] {$u$};
	\draw [thick, dashed, gray] (-0.62,-0.43) -- (1,0.7); 
	\draw [thick, dashed, gray] (0.63,-0.45) -- (-1,0.7); 
     \draw [thick, dashed, gray] (0,0.7) -- (0,-1.9); 
	\draw [thick]  (-1,0.7)--(1,0.7); 
 \draw [thick, fill=lgold,opacity=0.6]  (-1,0.7)--(-1,0.5)--(0,-1.9)--(1,0.5)--(1,0.7);
 
\draw [thick, dred] (0.8,0) --(1,0.7)--(1.08,0.7)--(0.8,0);
\draw [thick, dred] (-0.8,0) --(-1.08,0.7)--(-1,0.7)--(-0.8,0);

	\draw [fill] (0,0) circle [radius=0.02];
	\draw [fill] (-1,0) circle [radius=0.02]; 
	\draw [fill] (1,0) circle [radius=0.02]; 
	\draw [fill] (-0.8,0) circle [radius=0.02]; 
	\draw [fill] (0.8,0) circle [radius=0.02]; 
	\draw [fill] (1,0.7) circle [radius=0.02]; 
	\draw [fill] (-1,0.7) circle [radius=0.02]; 
	\draw [fill] (1,0.5) circle [radius=0.02]; 
	\draw [fill] (-1,0.5) circle [radius=0.02]; 
    \draw [fill] (0,0.7)  circle [radius=0.02];
	\draw [fill] (-0.62,-0.43) circle [radius=0.02]; 
	\draw [fill] (0.62,-0.43) circle [radius=0.02]; 
	\draw [fill] (0,-1.9) circle [radius=0.02]; 

	\draw (-0.16,-0.07) node {$0$};
	\draw (0.7,0.07) node {$p$};
	\draw (-0.7,0.07) node {$-p$};
	\draw (1.15,0.07) node {$\frac{1}{\alpha}p$};
	\draw (-1.15,0.07) node {$-\frac{1}{\alpha}p$};
	\draw (0,0.9) node {$p^1$};
    \draw (0.72,-0.5) node {$p^2$};
	\draw (-0.72,-0.5) node {$p^3$};
	\draw (0.1,-2) node {$d^1=-sp^1$};
    \draw (-1.17,0.8) node {$-sp^2$};
	\draw (1.15,0.85)  node {$-sp^3$};
    \draw (-1.1,0.45) node {$d^2$};
    \draw (1.15,0.5) node {$d^3$};
\end{tikzpicture}
\caption{Construction used in the proof of Lemma \ref{lem:PlanarCase_equality}, $K_{max,s}$ (yellow).}
    \label{fig:K_before_transformation}
\end{figure}
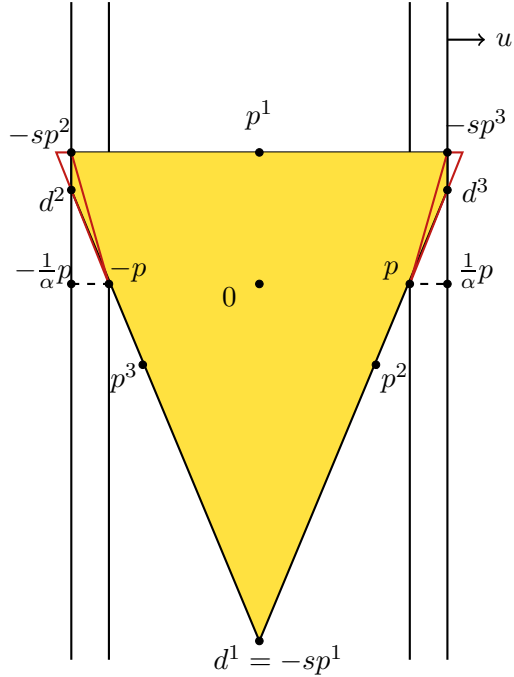

Assuming now, there exists some $x \in \bd(K) \cap \pos(\{-sp^3,p\}) \setminus \conv(\{-sp^3,p, d^3\})$, we would have that $[sp^2,x] \subset \conv(K \cup (-K))$. 
This would imply $x, \frac{1}{\alpha} p, sp^2 \in \bd(\conv(K \cup (-K))$ with $\frac{1}{\alpha} p \in \pos(\{sp^2,x\})$ and therefore $p \in \bd(K)$ as well as 
$\frac{1}{\alpha} p \in \inter(\conv(\{p,x,sp^2\})) \subset \inter(\conv(K \cup (-K)))$ and, since $\pm \frac 1 \alpha p$ were the only points in $\frac 1 \alpha K \cap (-K) \cap \conv(K \cup (-K))$, this would imply $\alpha(K) < \alpha (K') = \alpha$
in contradiction to our assumption that $\alpha(K)=\alpha$. Thus, $\bd(K) \cap \pos(\{-sp^3,p\}) \subset \conv(\{-sp^3,p, d^3\})$. 
Using a similar argument, it follows that $\bd( K) \cap \pos(\{-sp^2,-p\}) \subset \conv(\{-sp^2,-p, d^2\})$.

On the other hand, observe that we may choose $K = \conv\left(\left\{-sp^1,d^2,-sp^2,-sp^3,d^3\right\}\right)$, which equals $K_{max,s}$ up to a linear transformation (such that $K' = K_{min,s}$). All in all, we have shown that 
there always exists some linear transformation $L$ such that $K_{min,s} \subset L(K) \subset K_{max,s}$.

Finally, let us realize that for any $K$ with $s=s(K)$, $s\in(\varphi,2]$, such that $K_{min,s} \subset K \subset K_{max,s}$ we have $\alpha(K)= \frac{s}{s^2-1}$. Notice that in the proof of \cite[Theorem 1.7 a),(ii)]{BDG1} it is shown that $\alpha(K_{max,s})=\frac{s}{s^2-1}$. Since $K_{min,s} \cap (-K_{min,s})=K_{max,s} \cap (-K_{max,s})$ and $\conv(K_{min,s} \cup (-K_{min,s})))=\conv(K_{max,s} \cup (-K_{max,s})))$, we have $\alpha(K_{min,s})=\frac{s}{s^2-1}$, too. Thus, for any $K_{min,s} \subset K \subset K_{max,s}$ holds $\alpha(K)= \frac{s}{s^2-1}$.
\end{proof}

Now, we are ready to prove the main Theorem.

\begin{proof}[Proof of Theorem \ref{thm:PlanarCase}]
In \cite[Theorem 1.7]{BDG1} it is shown that $\frac{2}{s(K)+1} \leq \alpha(K) \leq 1$. 

Combining this with Lemma \ref{lem:PlanarCase}, we obtain 
\[
\frac{2}{s(K)+1} \leq \alpha(K) \leq \min \left\{ 1, \frac{s(K)}{s(K)^2-1} \right\}.
\]
Now, for any given $s \in [1,2]$ consider the set $K_{max,s}$ and notice that $u^1 \in \bd(K_{max,s}) \cap \bd(-K_{max,s}) \cap \bd(\conv(K_{max,s} \cup (-K_{max,s}))$ if $s \leq \varphi$, which shows $\alpha(K_{max,s})=1$. On the other hand, for $s > \varphi$, Lemma \ref{lem:PlanarCase} shows that $\alpha(K_{max,s})=\frac{s(K)}{s(K)^2-1}$. 

We now show that for every $s\in[1,2]$ and $\alpha\in\left[\frac{2}{s+1},\min\{1,\frac{s}{s^2-1}\}\right]$ there exists $K_{s,\alpha}\in\mathcal K^2$, such that $s(K_{s,\alpha})=s$ and $\alpha(K_{s,\alpha})=\alpha$. To do so, let $S=\conv(\{p^1,p^2,p^3\})$ be a regular Minkowski centered triangle and $K_s=S\cap(-sS)$, $s\in[1,2]$. By \cite[Remark 4.1]{BDG1} we have that $K_s$ is Minkowski centered with $s(K_s)=s$ and $\alpha(K_s)=\frac{2}{s+1}$. 
Moreover, defining $q^i$, $i=2,3$, to be the vertices of $K_s$, which are the intersection point of the edges with the normal vectors $\frac{s}{2}p^i$ and $-\frac12p^1$, respectively, 
we see that $-\frac12 p^1,-\frac{1}{s} q^2,-\frac 1s q^3$ is a well-spread triple of asymmetry points of $K_s$.

The idea is to define a continuous transformation $f:\{K_s:s\in[1,2]\}\times[0,1]\rightarrow\mathcal K^2$ with  $s(f(K_s,t))=s$ for all $t\in[0,1]$, while $f(K_s,0) = K_s$ and $\alpha(f(K_s,1)) = \min\left\{ 1, \frac{s(K)}{s(K)^2-1} \right\}$. 

\begin{figure}[ht]
\centering
    \begin{tikzpicture}[scale=2.7]
    \draw [thick, dgreen, fill opacity=0.7] (-0.11547,0.8) -- (0.11547,0.8) -- (0.75038,-0.2997)-- (0.6348,-0.5)-- (-0.6348,-0.5)-- (-0.75038,-0.2997)--(-0.11547,0.8);

    \draw [thick, fill=lgold, fill opacity=0.7] (-0.11547,0.8) -- (0.11547,0.8) -- (0.6348,-0.1)-- (0.6348,-0.5)-- (-0.6348,-0.5)-- (-0.6348,-0.1)--(-0.11547,0.8);
    \draw [thick, dred] (0,0.8) -- (0.6348,0.03)-- (0.6348,-0.5)-- (-0.6348,-0.5)-- (-0.6348,0.03)--(0,0.8);
    \draw [thick, gray] (0.11547,-0.8) -- (-0.11547,-0.8) -- (-0.75038,0.2997)-- (-0.6348,0.5)-- (0.6348,0.5)-- (0.75038,0.2997)--(0.11547,-0.8);
    \draw [thick, black, dotted] (-0.11547,0.8) -- (0,1)--(0.11547,0.8); 
    \draw [thick, black, dotted] (0.75038,-0.2997)-- (0.87,-0.5)--(0.6348,-0.5);
    \draw [thick, black, dotted] (-0.75038,-0.2997)-- (-0.87,-0.5)--(-0.6348,-0.5);
    \draw [thick, black, dotted] (-0.11547,-0.8) -- (0,-1)--(0.11547,-0.8);
    \draw [thick, black, dotted] (0.75038,0.2997)-- (0.87,0.5)--(0.6348,0.5);
    \draw [thick, black, dotted] (-0.75038,0.2997)-- (-0.87,0.5)--(-0.6348,0.5);

    \draw [thick, gray, dashed] (-0.6348,-0.5) -- (0.405,0.3);
    \draw [thick, gray, dashed] (0.6348,-0.5) -- (-0.405,0.3);
    \draw [thick, gray, dashed] (0,-0.5) -- (0,0.8);

    
    \draw [fill] (0,-0.01) circle [radius=0.01];
    \draw [fill] (0.69,-0.4) circle [radius=0.02];
    \draw [fill] (0.64,-0.5) circle [radius=0.02];
    \draw [fill] (-0.64,-0.5) circle [radius=0.02];
    \draw [fill] (-0.69,-0.4) circle [radius=0.02];
    \draw [fill] (0,0.8) circle [radius=0.02];

    \draw [fill] (0,-0.5) circle [radius=0.03]; 
    \draw [fill] (0.405,0.3) circle [radius=0.03]; 
    \draw [fill] (-0.405,0.3) circle [radius=0.03]; 

    \draw (-0.07,-0.07) node {$0$};
    \draw (0,1.1) node {$p^1$};
    \draw (1,-0.55) node {$p^2$};
    \draw (-1,-0.55) node {$p^3$};
    \draw (0.6,-0.65) node {$q^2$};
    \draw (0.3,0.9) node {$\frac s2 p^1$};
    \draw (0.9,-0.2) node {$\frac s2 p^2$};
    \draw (-0.9,-0.2) node {$\frac s2 p^3$};
    \draw (0.53,0.35) node {$-\frac 1s q^3$};
    \draw (-0.55,0.35) node {$-\frac 1s q^2$};
    \draw (-0.6,-0.65) node {$q^3$};
    \draw (0,-0.65) node {$-\frac 12 p^1$};
     \end{tikzpicture}
\caption{Transformation within the proof of Theorem \ref{thm:PlanarCase}: $-S \cap (s S)$ (gray), $K_s = S \cap (-s S)$ before the transformations (green), the transformed set after Step 1 (filled yellow), the transformed set after Step 2 (red), the asymmetry points $-\frac 12 p^1, -\frac 1s q^2, -\frac 1s q^3$ (big black dots) of $f(K_s,t)$, $t\in[0,1]$, and $S$ as well as $-S$ (dotted). 
}
\label{fig:ScapsS}
\end{figure}
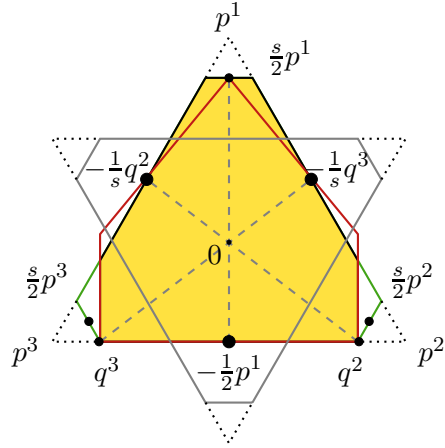

This is done in two steps:
\begin{enumerate}[Step 1]
    \item 
    For $t\in[0,\frac12]$, continuously rotate the lines containing the edges of $K_s$ supporting the point $\frac{s}{2}p^2$ around $q^2$ and $\frac{s}{2}p^3$ around $q^3$, respectively, such that at the end of the transformation, the new edges are both orthogonal to the edge containing 
    $-\frac12 p^1$.
    \item For $t\in[\frac12,1]$, continuously rotate the lines containing the edges of $K_s$, which contain $-\frac 1s q^2$ and $-\frac 1s q^3$, respectively, around those points, s.t.~at the end of the transformation, the new edges intersect in $\frac{s}{2}p^1$. 
\end{enumerate}
For every $u\in\bd(\mathbb B_2)$ let $\rho(u)>0$ be defined such that $\rho(u)u\in\bd(K_s)$. It is very simple to verify that $\rho(u)/\rho(-u)\in[1/s,s]$ for all $u\in\bd(\mathbb B_2)$ after each step. Thus, $-\frac{1}{s}f(K_s,t)\subset f(K_s,t)$, $t\in[0,1]$. Moreover, these transformations are done in a way that the asymmetry points $-\frac12 p^1,-\frac{1}{s} q^2,-\frac1s q^3 \in\bd(f(K_s,t))$, $t\in[0,1]$ are kept to be asymmetry points. 
Hence, by Proposition \ref{prop:Opt_Containment}, $s(f(K_s,t))=s$ for every $s\in[\varphi,2]$. Recognize that $f(K_s,t)$ equals (up to a linear transformation) the corresponding $K_{max,s}$. 
Since the transformation $f$ is continuous, $\alpha(f(K_s,0))=\alpha(K_s)=\frac{2}{s+1}$, and $\alpha(f(K_s,1))=\min\{1,\frac{s}{s^2+1}\}$, we conclude that $\{\alpha(f(K_s,t)):t\in[0,1]\}=\left[\frac{2}{s+1},\min \left\{1,\frac{s}{s^2-1} \right\}\right]$, for every $s\in[1,2]$, as desired.
\end{proof}

\section{Diameter-width-ratio for (pseudo-)complete sets}
For $C_1, \dots, C_k  \in\K^n$ we say $C_1 \subset \ldots \subset C_k$ is \textcolor{dred}{left-to-right optimal}, if $C_1 \subset^{opt} C_k$.

We recall the characterization of pseudo-completeness from \cite{BrG2}.
 \begin{proposition}\label{prop:pseudo-complete-charact}
   Let $K,C \in \K^n$ with $s(C)=1$. Then the following are equivalent:
   \begin{enumerate}[(i)]
   \item $K$ is pseudo-complete \wrt~$C$,
   \item $(s(K)+1) r(K,C)= r(K,C)+R(K,C) = \frac{s(K)+1}{s(K)} R(K,C)=  D(K,C)$, and
   \item for every incenter $c$ of $K$ we have
     \[\frac{s(K)+1}{2s(K)}(-(K-c)) \subset \frac{K-K}2 \subset \frac12 D(K,C) C \subset \frac{s(K)+1}2 (K-c)\]
     is left-to-right optimal, which implies that $c$ is also a circumcenter and a Minkowski center of $K$.
   \end{enumerate}
 \end{proposition}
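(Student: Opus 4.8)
The plan is to prove the cycle (i)$\Rightarrow$(ii)$\Rightarrow$(iii)$\Rightarrow$(i), using throughout the displayed inequality chain of \cite{BrKo},
\[
\frac{w(K,C)}{2} \le \frac{s+1}{2}r \le \frac{r+R}{2} \le \frac{s+1}{2s}R \le \frac{D}{2},
\]
where I abbreviate $s:=s(K)$, $r:=r(K,C)$, $R:=R(K,C)$, $D:=D(K,C)$ and, since breadths are translation invariant in the gauge body, assume $0\in C=-C$. I would first record the purely algebraic facts that each of $\frac{s+1}{2}r=\frac{r+R}{2}$ and $\frac{r+R}{2}=\frac{s+1}{2s}R$ is equivalent to $R=sr$, while $\frac{s+1}{2s}R=\frac D2$ together with $R=sr$ gives $(s+1)r=D$. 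For (i)$\Rightarrow$(ii): pseudo-completeness says $\frac{r+R}{2}=\frac D2$, so the two quantities $\frac{r+R}{2}$ and $\frac{s+1}{2s}R$ are squeezed between equal bounds and all four central terms collapse to one value; unwinding the displayed algebra, this is exactly (ii).

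The geometric heart is (ii)$\Rightarrow$(iii). Fix an incenter $c$, put $B:=K-c$, and write $h_X(a):=\max_{x\in X}a^Tx$. The incircle $c+rC\subset K$ reads $h_B(a)\ge r\,h_C(a)$ for all $a$, and the bound $b_a(K,C)\le D=(s+1)r$ reads $h_B(a)+h_B(-a)\le (s+1)r\,h_C(a)$. Subtracting the incircle bound in direction $-a$ yields the sandwich
\[
r\,h_C(a)\ \le\ h_B(a)\ \le\ sr\,h_C(a)\qquad\text{for all }a,
\]
from which I read off two facts at once: $B\subset srC=RC$, so $c$ is a circumcenter; and $h_B(a)\le sr\,h_C(a)=s\bigl(r\,h_C(a)\bigr)\le s\,h_B(-a)$, i.e.\ $B\subset s(-B)$, so $c$ is a Minkowski center. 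With $c$ a Minkowski center the four inclusions become routine. Writing $A_1:=\frac{s+1}{2s}(c-K)$, $A_2:=\frac{K-K}{2}$, $A_3:=\frac D2 C$, $A_4:=\frac{s+1}{2}(K-c)$, the containment $A_2\subset^{opt}A_3$ is just the central symmetral sitting in its circumball; $A_3\subset A_4$ is the incircle $rC\subset B$; from $B\subset s(-B)$ one gets $-B\subset\frac{s}{s+1}(B-B)=\frac{s}{s+1}(K-K)$, whence $A_1\subset A_2$; and since $A_1=-\frac1s A_4$ one computes $R(A_1,A_4)=\frac1s R(-K,K)=\frac1s\,s(K)=1$, so $A_1\subset^{opt}A_4$ reduces to the actual inclusion $A_1\subset A_4$, which is once more $B\subset s(-B)$.

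For (iii)$\Rightarrow$(i) I would use submultiplicativity of the circumradius to convert left-to-right optimality into tightness of each link: from $1=R(A_1,A_4)\le R(A_1,A_3)\,R(A_3,A_4)$ with both factors $\le 1$ one forces $R(A_3,A_4)=1$, i.e.\ $\frac{D}{(s+1)r}=1$, hence $(s+1)r=D$. Feeding this into the last two inequalities of the \cite{BrKo} chain, $\frac{s+1}{s}R\le D=(s+1)r$ gives $R\le sr$, while $sr\le R$ is the chain's middle inequality; thus $R=sr$ and $r+R=(s+1)r=D$, which is (i).

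The step I expect to need the most care is the passage between the scalar equalities and the geometric/optimal-containment statements of (iii), since a naive attempt would try to re-derive the contact-point and normal configuration of Proposition \ref{prop:Opt_Containment} for the incircle and match it to the diametral chord. My expectation is that the support-function sandwich $r\,h_C\le h_B\le sr\,h_C$ sidesteps that bookkeeping entirely and simultaneously certifies the coincidence of incenter, circumcenter and Minkowski center asserted in the ``which implies'' clause. The only identities I would verify explicitly are the translation-invariance facts $R(-K,K)=s(K)$ and $R(C,K-c)=1/r$ that underlie the radius computations $R(A_1,A_4)=1$ and $R(A_3,A_4)=\frac{D}{(s+1)r}$.
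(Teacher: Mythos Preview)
The paper does not give its own proof of this proposition; it is quoted verbatim from \cite{BrG2} with the words ``We recall the characterization of pseudo-completeness from \cite{BrG2}.''  So there is nothing to compare against here.

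That said, your argument is correct and self-contained.  The support-function sandwich
\[
r\,h_C(a)\ \le\ h_B(a)\ \le\ sr\,h_C(a)
\]
obtained from the inball inclusion and the diameter bound $h_B(a)+h_B(-a)\le D\,h_C(a)=(s+1)r\,h_C(a)$ is exactly the right device: it yields in one stroke that the incenter is a circumcenter ($B\subset RC$) and a Minkowski center ($B\subset s(-B)$), and from there the four containments $A_1\subset A_2\subset A_3\subset A_4$ are straightforward.  Your verification of left-to-right optimality via $R(A_1,A_4)=\frac1s R(-B,B)=\frac1s\,s(K)=1$ is clean, and the submultiplicativity step $1=R(A_1,A_4)\le R(A_1,A_3)R(A_3,A_4)$ with both factors $\le 1$ correctly forces $R(A_3,A_4)=\frac{D}{(s+1)r}=1$, closing the cycle.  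The two auxiliary identities you flag at the end, $R(-K,K)=s(K)$ and $R(C,K)=1/r(K,C)$, are standard and your implicit derivations of them are fine.

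One small remark on presentation: the ``which implies'' clause in (iii) is a consequence of the chain rather than part of the hypothesis, so strictly speaking it should be derived from the chain alone.  Your cycle handles this automatically (chain $\Rightarrow$ (i) $\Rightarrow$ (ii) $\Rightarrow$ $c$ is circumcenter and Minkowski center), but it would read more cleanly if you noted this explicitly.
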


For $K \in \K^n$ a \textcolor{dred}{regular supporting slab} of $K$ is a pair of opposing supporting hyperplanes of $K$, such that at least one of the two hyperplanes supports $K$ in a smooth boundary point. In case when $K$ is a polytope, the latter means that at least one of the hyperplanes supports $K$ in a whole facet.

In \cite{MoSch} there is a characterization of complete sets using the concept of regular supporting slabs presented.
  
\begin{proposition}\label{prop:regular-slabs+chords}
 Let $K,C \in \K^n$. Then the following are equivalent:
     \begin{enumerate}[(i)]
     \item $K$ is complete w.r.t.~$C$,
     \item $b_s(K,C) = D(K,C)$ for all $s$ such that $s$ is the normal vector of a regular supporting slab of $K$. 
     \end{enumerate}
\end{proposition}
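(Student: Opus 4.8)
The plan is to reduce completeness to a single \emph{completion identity} and then read the breadth condition off the boundary of the completion at smooth points. Write $h_A(t)=\max_{x\in A}t^Tx$ for support functions and recall $b_t(K,C)=\frac{h_{K-K}(t)}{h_C(t)}$ and $D:=D(K,C)=\max_t b_t(K,C)$ from Section~1; since width and diameter are introduced relative to a $0$-symmetric gauge I take $C\in\K^n_0$, so that $D\,C$ is symmetric. First I would establish
\[
K\text{ complete w.r.t.\ }C\quad\Longleftrightarrow\quad K=\Gamma(K):=\bigcap_{y\in K}\big(y+D\,C\big).
\]
Adjoining a point $q$ and passing to difference bodies, $\conv(K\cup\{q\})-\conv(K\cup\{q\})=\conv\big((K-K)\cup(q-K)\cup(K-q)\big)$, so, as $D\,C$ is convex and symmetric, one gets $D(\conv(K\cup\{q\}),C)\le D$ iff $q-y\in D\,C$ for all $y\in K$, i.e.\ iff $q\in\Gamma(K)$. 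Since $K\subseteq\Gamma(K)$ always (because $K-K\subseteq D\,C$), the body $K$ has a proper superset of diameter $D$ exactly when $\Gamma(K)\setminus K\neq\emptyset$, which yields the equivalence.

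Next I would prove the local key step: for a smooth boundary point $p$ with unique outer normal $s$,
\[
p\in\bd\Gamma(K)\quad\Longleftrightarrow\quad b_s(K,C)=D .
\]
Writing the defining halfspaces of $\Gamma(K)$ as $t^Tz\le D\,h_C(t)-h_K(-t)$ and using $t^Tp\le h_K(t)\le D\,h_C(t)-h_K(-t)$ for $p\in K$, a constraint can be tight at $p$ only if $t^Tp=h_K(t)$, i.e.\ only if $t$ lies in the normal cone of $K$ at $p$; smoothness forces $t$ to be a positive multiple of $s$, and then tightness is equivalent to $h_K(s)+h_K(-s)=D\,h_C(s)$, that is $b_s(K,C)=D$. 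In particular $p\in\inter\Gamma(K)\iff b_s(K,C)<D$.

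With these two facts both implications are short. For (i)$\Rightarrow$(ii): completeness gives $K=\Gamma(K)$, so the smooth support point of any regular supporting slab lies in $\bd K=\bd\Gamma(K)$, whence its normal $s$ satisfies $b_s(K,C)=D$. For (ii)$\Rightarrow$(i) I argue by contraposition: if $K\subsetneq\Gamma(K)$, choose $q_0\in\inter\Gamma(K)\setminus K$; the segment from an interior point of $K$ to $q_0$ stays in $\inter\Gamma(K)$ and meets $\bd K$, so $\bd K\cap\inter\Gamma(K)$ is nonempty and relatively open in $\bd K$, hence contains a smooth point $p$ (almost every boundary point of a convex body is smooth). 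The outer normal $s$ at $p$ is the normal of a regular supporting slab, and the local step gives $b_s(K,C)<D$, contradicting (ii).

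I expect the main obstacle to be the interplay of the completion identity with the local step: one must justify the difference-body computation carefully (verifying that adjoining a point can only violate the single constraint $q-y\in D\,C$) and then control the infinitely many halfspace constraints of $\Gamma(K)$ near $p$, which is exactly where smoothness (collapsing the normal cone to one ray) and the continuity of $t\mapsto D\,h_C(t)-h_K(-t)$ on the unit sphere enter; the density of smooth boundary points used in the converse is classical. For a genuinely non-symmetric gauge one would instead use $\Gamma(K)=\bigcap_{y\in K}\big((y+D\,C)\cap(y-D\,C)\big)$ and orient each regular slab through its smooth support point, after which the same scheme carries over.
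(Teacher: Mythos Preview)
The paper does not give its own proof of this proposition; it is quoted from Moreno--Schneider \cite{MoSch} and used as a black box. So there is no in-paper argument to compare against.

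Your proof is correct. The completion identity $K=\Gamma(K):=\bigcap_{y\in K}(y+D\,C)$ is exactly the standard characterization of diametrically complete sets (once $C$ is $0$-symmetric, which, as you observe, is how the breadth functionals were introduced in Section~1), and your derivation of it via the difference body of $\conv(K\cup\{q\})$ is clean. The local step---that for a smooth boundary point $p$ of $K$ with outer normal $s$ one has $p\in\bd\Gamma(K)$ iff $b_s(K,C)=D$---is the heart of the matter, and your sandwich $t^Tp\le h_K(t)\le D\,h_C(t)-h_K(-t)$ together with the collapse of the normal cone at a smooth point is the right mechanism; the compactness/continuity remark you add is precisely what is needed to pass from ``all constraints strict'' to ``interior point''. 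For (ii)$\Rightarrow$(i) the one small thing you leave implicit is that $\Gamma(K)\setminus K\neq\emptyset$ forces $\inter\Gamma(K)\setminus K\neq\emptyset$; this follows at once by pushing a point of $\Gamma(K)\setminus K$ slightly toward an interior point of $K$, and then your segment argument and the density of smooth boundary points finish as stated.

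This route---completion operator plus density of smooth points---is in fact the approach of Moreno--Schneider, so your proposal reconstructs the cited result rather than offering a genuinely different proof.
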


Now, we are ready to prove the general dimension bound on the diameter-width ratio for (pseudo-)complete sets.

Recall that $\tau(K) = R\left(K  \cap (-K), \frac{K-K}{2}\right) = r\left(\frac{K-K}{2}, K  \cap (-K)\right)^{-1}$.


\begin{proof}[Proof of Theorem \ref{thm:results_comp}]
We assume w.l.o.g.~that $K$ is Minkowski centered and $r(K,C)=1$. 
Abbreviating $s:=s(K)$ again, we obtain $D(K,C)=(s+1)r(K,C)=s+1$ and 
\begin{equation*}
 \frac{K-K}2 \subset \frac{D(K,C)}{2} C = \frac{s+1}{2} C \subset \frac{s+1}2 (K \cap (-K))
\end{equation*}
from Proposition \ref{prop:pseudo-complete-charact}.
Thus, $C \subset K \cap (-K)$, which implies $w(K,C) \geq w(K,K \cap (-K))$ and since $w(K,K \cap (-K)) = w\left(\frac{K-K}{2},K \cap (-K)\right) = 2r\left(\frac{K-K}{2},K \cap (-K)\right)$ (see \cite{Sch} for basic properties of the width)
\begin{equation} \label{eq:Dw_chain}
    \frac{D(K,C)}{w(K,C)} =\frac{s+1}{w(K,C)} \leq \frac{s+1}{2 r\left(\frac{K-K}2, K \cap (-K)\right)} 
    = \frac{s+1}2 \tau(K) \le \frac{s+1}2 . 
\end{equation}
Moreover, remember that $\alpha(K)=1$ if and only if $\tau(K)=1$ by Proposition \ref{prop:old_results} (ii). 

Now, consider first the case $n$ odd. Let $S=\conv(\{p^1,\dots,p^{n+1}\})$ be a regular Minkowski centered simplex, and for any $s \in [1,n]$ we define  the sets $K = S \cap (-sS)$ and $C=S \cap (-S)$. Then $s(K)=s$, $K \cap (-K) = S \cap (-S) =C$ (see \cite[Remark 4.1]{BDG1}) 
and since $\frac{K-K}{2} \subset^{opt} \frac{s+1}{2} (K \cap (-K))$ (see \cite[Theorem 1.3]{BDG1}), we have
\[
D(K,C)= 2R\left(\frac{K-K}{2},C\right)=2R\left(\frac{K-K}{2},K \cap (-K)\right)= s+1.
\]

Since all facets of $K$ are facets of $S$ or $-sS$, the normal vectors of the facets of $K$ and $C$ are exactly $\pm p^i$, $i \in \{1, \dots, n+1\}$ and therefore all the regular supporting slabs of $K$ have those normal vectors. 
Now, since $\frac{s}{2} p^i, -\frac{1}{2} p^i \in \bd(K)$, while $\pm \frac{1}{2} p^i \in \bd(C)$ it follows that $b_{p^i}(K,C) = s+1= D(K,C)$ for all $i \in \{1,\dots,n\}$, and therefore the completeness of $K$ w.r.t.~$C$ by Proposition \ref{prop:regular-slabs+chords}.

Moreover, from \cite{BDG1}, we know $S \cap (-S) \subset \frac{S-S}{2} \subset \conv( S \cup (-S))$ is left-to-right optimal for odd $n$.
Using $K \cap (-K) = S \cap (-S)$ we obtain  
\[
K \cap (-K) \subset^{opt} \frac{S-S}{2}.  
\]
and since $K \subset S$ implies $\frac{K-K}{2} \subset \frac{S-S}{2}$ 
we conclude that 
\[
K \cap (-K) \subset^{opt} \frac{K-K}{2}, 
\]
i.e., $\tau(K) =1$.

Hence, we see that, with this choice of $K$ and $C=K \cap (-K)$, we have equality all through the inequality chain \eqref{eq:Dw_chain} for all $s \in [1,n]$.

Finally, for even $n$, let $K':= K \times [-1,1]$ and $C':=C\times \left[-\frac{2}{s+1},\frac{2}{s+1} \right]$ with $K,C \in \K^{n-1}$ as above. Then, $K',C' \in \K^n$, and we easily see that $s(K') \le n-1$.  

By \cite[Theorem 1.3]{BDG1}, 
\[
\frac{K-K}{2} \subset^{opt} \frac{s+1}{2} (K \cap (-K))= \frac{s+1}{2} C.
\]
Moreover, $\pm u^{n} \in \bd\left(\frac{K'-K'}{2}\right) \cap \bd\left(\frac{s+1}{2} C'\right)$. Hence, $\frac{K'-K'}{2} \subset^{opt} \frac{s+1}{2} C'$.
Thus, 
\[
\frac{D(K',C')}{2}= R\left(\frac{K'-K'}{2}, C'\right)= \frac{s+1}{2}. 
\]
Notice, that the set of all regular supporting slabs of $K'$ consists of those of $K$ and the new additional one in the direction of the standard unit vector $u^{n}$. For all normal vectors $u$ of such regular supporting slabs of $K'$ we have $b_u(K',C')=s+1=D(K',C')$. Hence, $K'$ is complete w.r.t. $C'$ by Proposition \ref{prop:regular-slabs+chords}.
\end{proof}

To prepare the proof of Theorem \ref{thm:Improved_richter} we first compute $\tau(S)$ for even dimensional Minkowski-centered simplices.

\begin{lemma}\label{lem:min_in_AM_S}
Let $n\in\mathbb N$ be even and let $S\in\mathcal K^n$ be an $n$-dimensional Minkowski-centered simplex. Then
\[
    S\cap(-S) \subset^{opt} \frac{n}{n+1}\frac{S-S}{2},
\]
i.e.~$\tau(S) = \frac{n}{n+1}$.
\end{lemma}

\begin{proof}
We may assume w.l.o.g.~that $S=\conv(\{p^1,\dots,p^{n+1}\})$ is regular with $\|p^i\|=1$, $i=1,\dots,n+1$. From \cite[Lemma 3.2(ii)]{BDG1} we know that
\[
p=\frac{2}{n+1}\sum_{i=1}^{\frac n2}p^i+\frac{1}{n+1}p^{n+1}
\]
is a vertex of $S \cap (-S)$.

Since $\sum_{i=1}^{n+1}p^i=0$, 
    \[
    p=\frac{n}{2(n+1)}\left(\frac{2}{n}\sum_{i=1}^{\frac n2}p^i-\frac 2n \sum_{i=\frac{n}{2}+1}^n p^i\right)
    \in \frac{n}{2(n+1)}\bd(S-S)=\bd \left(\frac{n}{n+1} \frac{S-S}{2} \right).
    \]

Moreover, we know from \cite[Prop. 1.1]{BDG1} that $\frac{S-S}{2} \subset \mathrm{conv}(S\cup(-S))$ and from \cite[Lemma 3.2(ii)]{BDG1} that $S\cap(-S) \subset^{opt}\frac{n}{n+1}\mathrm{conv}(S\cup(-S))$. Thus, we can conclude that $S\cap (-S) \subset^{opt} \frac{n}{n+1}\frac{S-S}{2}$, as desired.
\end{proof}

The \emph{Banach-Mazur distance} between $K,C\in\mathcal K^n$ is the quantity 
\[
d_{BM}(K,C)=\inf\{\rho\geq 1:t^1+K\subset L(C) \subset t^2+\rho K,L\in\mathrm{GL(n)},t^1,t^2\in\mathbb R^n\}.
\]
In \cite{Sch1} a stability for the Banach-Mazur distance near the simplex is shown. Using it and the ideas developed in \cite[Thm. 1.5]{BDG1} we can provide an upper bound of $\tau(K)$ for values of $s(K)$ near $n$, i.e.~when $K$ is almost a simplex.

\begin{proposition}\label{prop:tau_near_simplex}
Let $n$ be even and $K\in\mathcal K^n$ be Minkowski centered with $s=s(K)$ fulfilling $n-\frac1n<s<n$. 
Then
\[
\tau(K) \leq \left( \frac{(n-s+1)(s+1)}{1-(n-s)(n+s(n+1))} - n \right) \frac{n}{n+1}.
\]
\end{proposition}
\begin{proof} 
Let $S=\conv(\{p^1,\dots,p^{n+1}\})$ be a regular Minkowski centered $n$-simplex and $\rho=d_{BM}(K,S)$. 
By the definition of the Banach-Mazur distance this means there exists a regular linear transformation $L$, such that $t^1+S\subset L(K)\subset t^2+\rho S$ for some $t^1,t^2 \in\R^n$ and it
is shown in \cite[Theorem 1.5]{BDG1} that 
\[
\rho \leq \rho_*:=\frac{n-s+1}{1-n(n-s)}.
\]
Since obviously $s(L(K))=s(K)$ and $\tau(L(K))= \tau(K)$ by Proposition \ref{prop:old_results}, we may (w.l.o.g.) replace $K$ by $L(K)$, and thus obtain
$t^1+S \subset K \subset t^2+\rho S$. Moreover, it is shown in the Proof of \cite[Theorem 1.5]{BDG1} that for $n-\frac1n<s<n$ holds 
    \[
    \mu(\rho) S \subset t^1+S \subset K \subset t^2+\rho S\subset (\rho+n(\rho-\mu(\rho))) S,
    \]
where $\mu(\rho) = \frac{n+1}{s+1}(1-s(\rho-1))$. 

Now, by Lemma \ref{lem:min_in_AM_S}, we obtain
    \[
    \begin{split}
        K\cap(-K) & \subset (t^2+\rho S)\cap(-t^2-\rho S) \\
        & \subset (\rho+n(\rho-\mu(\rho))) (S\cap(-S)) \\
        & \subset (\rho+n(\rho-\mu(\rho))) \frac{n}{n+1} \frac{S-S}{2} \\
        & \subset \frac{\rho+n(\rho-\mu(\rho))}{\mu(\rho)} \frac{n}{n+1} \frac{K-K}{2},
    \end{split}
    \]
and since one can easily verify that 
$\frac{\rho+n(\rho-\mu(\rho))}{\mu(\rho)}$
is increasing in $\rho$, it directly follows that
\[
K\cap(-K)\subset\frac{n}{n+1}\frac{(\rho_*+n(\rho_*-\mu(\rho_*)))}{\mu(\rho_*)}\frac{K-K}{2}. 
\]
We obtain
\[
K \cap (-K) \subset \psi \frac{n}{n+1} \frac{K-K}{2},\] 
where 
\[
\begin{split}
\psi & =\frac{(n+1)\rho_*}{\frac{n+1}{s+1}(1-s(\rho_*-1))}-n  = \frac{\frac{n-s+1}{1-n(n-s)}}{\frac{1}{s+1}\left(1-s\frac{(n+1)(n-s)}{1-n(n-s)}\right)} -n \\
& = \frac{(n-s+1)(s+1)}{1-(n-s)(n+s(n+1))} - n.
\end{split}
\]
\end{proof}

\begin{proof}[Proof of Theorem \ref{thm:Improved_richter}]
Let $s:=s(K)$ and keep in mind that by definition $\tau(K) \leq 1$. Moreover, one can easily verify that $\left( \frac{(n-s+1)(s+1)}{1-(n-s)(n+s(n+1))} - n \right)\frac{n}{n+1} \leq 1$
implies $s > n-\frac{1}{n}$. Thus, combining the ideas of the proof of Theorem \ref{thm:results_comp} with Proposition \ref{prop:tau_near_simplex}, we obtain
    \[
    \frac{D(K,C)}{w(K,C)} \leq 
    \frac{s+1}{2}\tau(K)
    \leq \min\left\{\frac{s+1}{2},\frac{s+1}{2} \left( \frac{(n-s+1)(s+1)}{1-(n-s)(n+s(n+1))} - n \right) \frac{n}{n+1} \right \}.
    \]
    Note that $\frac{s+1}{2}$ is increasing, whereas $\frac{s+1}{2} \left( \frac{(n-s+1)(s+1)}{1-(n-s)(n+s(n+1))} - n \right) \frac{n}{n+1}$
    is decreasing for $s \in(n-1/n,n)$. Thus, the minimum of both attains its maximum, whenever 
    \[
   \left( \frac{(n-s+1)(s+1)}{1-(n-s)(n+s(n+1))} - n \right) \frac{n}{n+1}=1.
    \]

    Writing everything in terms of $s$ and solving the equation, we obtain
    \[
    \frac{D(K,C)}{w(K,C)} \leq \frac{s_0+1}{2},
    \]
    where 
    \[
s_0=\frac{n^4+n^3+2n^2+\sqrt{n^8+6n^7+17n^6+28n^5+28n^4+12n^3-4n^2-12n-4}}{2(n^3+2n^2+3n+1)}> n-\frac1n.
    \]
\end{proof}

Next, we present the proof for the even tighter diameter-width ratio bound for pseudo-complete sets in the planar case. 
\begin{proof}[Proof of Theorem \ref{thm:results_pscomp}]
 Again, we may assume w.l.o.g.~that $K$ is Minkowski centered and use the abbreviation $s:=s(K)$. 
 It is easy to see that $\conv(K \cup (-K)) \subset \frac{2s}{s+1} \frac{K-K}2$ (c.f.~\cite[Theorem 1]{BDG1}). Thus,
\[
K \cap (-K) \subset \alpha(K) \conv (K \cup (-K)) \subset \alpha(K) \frac{2s}{s+1} \frac{K-K}2,  
\]
which implies $\tau(K)\le \frac{2s}{s+1}\alpha(K)$. Using the inequality chain \eqref{eq:Dw_chain}, we obtain the tighter bound 
\begin{align*}
    \frac{D(K,C)}{w(K,C)} 
    &\leq \frac{s+1}2 \tau (K) 
    \leq \frac{s+1}2 \min \left\{1, \frac{2s}{s+1}  \frac{s}{s^2-1} \right\}  
    = \min \left\{\frac{s+1}2, \frac{s^2}{s^2-1} \right\}.
\end{align*}
Finally, let $\tilde s:=\max_{s \in [1, 2]} \min \left\{\frac{s+1}2, \frac{s^2}{s^2-1} \right\}$. Since $\frac{s+1}2$ is increasing and $\frac{s^2}{s^2-1}$ decreasing in $s \in [1, 2]$, we see that $\tilde s$ is the solution of the equation $\frac{s+1}2 = \frac{s^2}{s^2-1}$ and has the value 
\[
\tilde s=\frac{1}{3} \left(1+ \sqrt[3]{19-3 \sqrt{33} } +\sqrt[3]{19+3 \sqrt{33} } \right) \approx 1.8393.
\] 
Thus, we obtain for all pseudo-complete $K$
\[
\frac{D(K,C)}{w(K,C)}\leq \frac{\tilde s+1}2 \approx 1.42,
\]
independently of the asymmetry of $K$.
\end{proof}

\begin{remark}\label{rem:tau_for_house}
Of course, there exist Minkowski centered $K \in \K^2$ with $\alpha(K) \neq \tau(K)$ \cite[Example 4.3]{BDG1}. Observe, however, that for any $s \in [\varphi,2]$ 
and $K$ such that $K_{min,s} \subset K \subset K_{max,s}$  with $s(K)=s$,
we have $\alpha(K) = \tau(K)$. 

In the proof of Theorem \ref{thm:PlanarCase} we have shown that 
$\alpha(K)=\frac{s}{s^2-1}$. Note that $K \cap (-K)$ is a hexagon, which we can denote by $\conv \{-p, p, q^1, q^2,q^3,q^4\}$, where $p$ is defined as in the proof of Theorem \ref{thm:PlanarCase}. Hence, the touching points of $K \cap (-K)$ and $\bd \left( \tau(K) \, \frac{K-K}{2} \right)$ must be vertices of $K \cap (-K)$. From the proof of \cite[Theorem 1.7 a),(ii)]{BDG1} we also have $p \in \frac{s}{s^2-1} \bd \left( \frac{K-K}{2} \right)$ and $q^i \in \frac{s+1}{2} \bd \left( \frac{K-K}{2} \right)$, $1 \leq i \leq 4$. Since $s \geq \varphi$, 
this implies $\tau(K) =  \min \left\{ \frac{s}{s^2-1}, \frac{2}{s+1} \right\}=\frac{s}{s^2-1}$.
\end{remark}

Observe that if one could show 
\[
\tau(K) \leq \min \left\{ 1, \frac{s(K)}{s(K)^2-1} \right\},
\]
the bound in Theorem \ref{thm:results_pscomp} could be improved to 
\[
\frac{D(K,C)}{w(K,C)} \leq \min \left\{ \frac{s(K)+1}{2}, \frac{s(K)}{2(s(K)-1)} \right\} \leq \frac{D(\GH,\GH\cap(-\GH))}{w(\GH,\GH\cap(-\GH))} = \frac{\varphi+1}2 \approx 1.31.
\]

\begin{remark}\label{rem:fill_diagram_ratio}
We show that for every pair $(\rho,s)$, with $s \in [1,2]$ and $1 \leq \rho \leq \min \left\{ \frac{s+1}{2}, \frac{s}{2(s-1)} \right\}$, there exists some Minkowski centered $K$, s.t.~s(K)=s and a set $C$, s.t.~$K$ is pseudo-complete w.r.t. $C$ and $\frac{D(K,C)}{w(K,C)}=\rho$ (c.f.~Figure \ref{fig1}).

\begin{figure}[ht]
  \begin{tikzpicture}[scale=5]
    \draw[thick, discont] (0.05,0) -- (0.25,0);
    \draw[thick, discont] (0,0.05) -- (0,0.25);
    \draw [thick] (-0.2,0) -- (0.05,0);
    \draw [thick] (0,-0.2) -- (0,0.05);
    \draw[->] [thick] (0.25, 0) -- (1.7, 0) node[right] {$s(K)$};
    \draw[->] [thick] (0, 0.25) -- (0, 0.9);

    \draw [thick, dblue, shift={(-0.5,-0.7)}] (1,1) -- (2,1);
    \draw[thick, dred, domain=1:1.61, smooth, variable=\x, dred, ,shift={(-0.5,-0.7)}]  plot ({\x}, {(\x+1)/2});
    \draw[thick, dred, domain=1.84:2, smooth, variable=\x,,shift={(-0.5,-0.7)}]  plot ({\x}, {(\x^2)/((\x^2-1))});
    \draw[thick, dred, domain=1.61:1.84, smooth, variable=\x, ,shift={(-0.5,-0.7)}]  plot ({\x}, {(\x+1)/2});
    \draw [thick, gray] (1.5,0.3) -- (1.5,0.635);

    \draw [thick, dashed,gray] (0,0.3) -- (0.5,0.3);
    \draw [thick, dashed,gray] (1.5,0) -- (1.5,0.3);
    \draw [thick, dashed,gray] (0.5,0) -- (0.5,0.3);
    \draw [thick, dashed,gray] (0,0.61) -- (1.1,0.61);
    \draw [thick, dashed,gray] (1.34,0) -- (1.34,0.3);
    \draw [thick, dashed,gray] (0,0.72) -- (1.34,0.72);
    \draw [thick, dashed,gray] (1.112,0) -- (1.112,0.3);

    \draw [fill,shift={(-0.5,-0.7)}] (1,1) circle [radius=0.01];
    \draw [fill,shift={(-0.5,-0.7)}] (2,1) circle [radius=0.01];
    \draw [fill,shift={(-0.5,-0.7)}] (1.84,1.42) circle [radius=0.01];
    \draw [fill,shift={(-0.5,-0.7)}] (1.62,1.31) circle [radius=0.01];
    \draw [fill] (1.5,0.635) circle [radius=0.01];
    
    
    \fill [fill=lgold, fill opacity=0.7, domain=1:1.3, variable=\x,shift={(-0.5,-0.7)}] (1,1) -- (1.3,1)-- (1.3,1.15)--(1,1);
    \fill [fill=lgold, fill opacity=0.7, domain=1.3:1.61, variable=\x,shift={(-0.5,-0.7)}] (1.3,1) -- (1.61,1)-- (1.61,1.3)--(1.3,1.15);
    \fill [fill=lgold, fill opacity=0.7, domain=1.8:2, variable=\x,shift={(-0.5,-0.7)}] (1.61,1.3) --(1.84,1.1)--(1.84,1)--(1.61,1);
    \fill [fill=lgold, fill opacity=0.7, domain=1.8:2, variable=\x,shift={(-0.5,-0.7)}]  (1.84,1)--(1.84,1.1)--(1.9,1.05)--(1.9,1);
    \fill [fill=lgold, fill opacity=0.7, domain=1.8:2, variable=\x,shift={(-0.5,-0.7)}]  (1.9,1)--(1.9,1.05)--(1.9995,1);

\draw[thick, dgreen, domain=1.615:2, smooth, variable=\x,shift={(-0.5,-0.7)}]  plot ({\x}, {\x/(2*(\x-1))});
    
    \draw (-0.25,0.9) node {$\frac{D(K,C)}{w(K,C)}$};
    \draw (-0.1,0.3) node {$1$};
    \draw (-0.1,0.62) node {$1.31$};
    \draw (-0.1,0.72) node {$1.42$};
    \draw (0.5,-0.1) node {$1$};
    \draw (1.5,-0.1) node {$2$};
    \draw (1.34,-0.1) node {$1.84$};
    \draw (1.11,-0.1) node {$\varphi$};
     \end{tikzpicture}
   \caption{Region of all possible values for the diameter-width ratio for pseudo complete sets $K$ in dependence of their Minkowski asymmetry $s(K)$: $\frac{D(K,C)}{w(K,C)} \geq 1$ (blue); $\frac{D(K,C)}{w(K,C)} \leq \min \left\{\frac{s(K)+1}{2}, \frac{s(K)^2}{s(K)^2-1} \right\}$ (red). Construction from Remark \ref{rem:fill_diagram_ratio}: 
   $\left\{ \frac{D(K,C_\lambda)}{w(K,C_\lambda)}, 0 \leq \lambda \leq 1 \right\} 
=  \left[1,  \min \left\{ \frac{s(K)+1}{2}, \frac{s(K)}{2(s(K)-1)} \right\}  \right]$ (yellow, with $\frac{s(K)}{2(s(K)-1)}$ in green).
   }\label{fig1}
\end{figure}
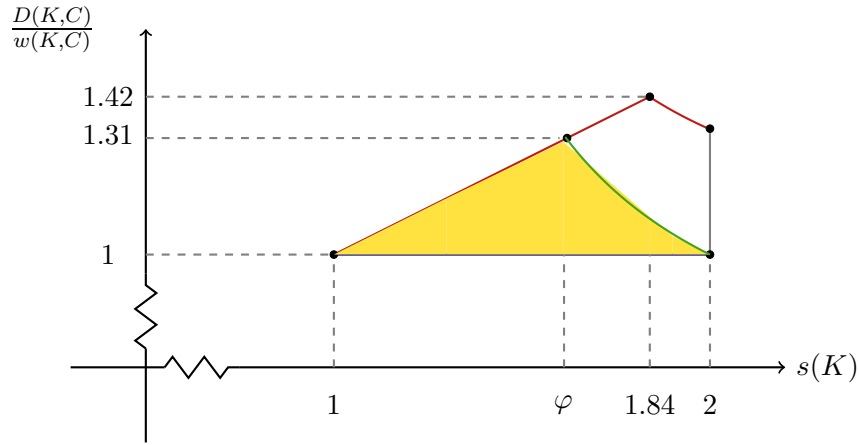

To do so, let $K \in \K^2$ be Minkowski centered with $K:=K_{max,s}$ (where $K_{max,s}$ is defined as in Section 5) and $s:= s(K) \in [1,2]$. Then define $C_\lambda=(1-\lambda) \left( \frac{K-K}{2} \right) + \lambda \frac{s+1}{2} (K \cap (-K))$ with $\lambda \in [0,1]$. This way $C_\lambda$ is a convex combination of $\frac{K-K}2$ and $\frac{s+1}{2}(K \cap(-K))$, and therefore $K \in \K^2_{ps,C_\lambda}$ with $D \left(K,C_\lambda\right)=2$ by Proposition \ref{prop:pseudo-complete-charact}.

By Remark \ref{rem:tau_for_house}, we have $\tau(K) = \min \left\{1,\frac{s}{s^2-1}\right\}$ and 
\begin{align*}
w(K,C_1) &= w \left(\frac{K-K}{2},\frac{s+1}{2} K \cap (-K)\right)= \frac{2}{s+1} w \left(\frac{K-K}{2},\frac{s+1}{2} K \cap (-K)\right) \\
&=  \frac{4}{s+1} r\left(\frac{K-K}{2},\frac{s+1}{2} K \cap (-K)\right) = \frac{4}{s+1} \frac{1}{\tau(K)}. 
\end{align*}
Thus, 
\[
\frac{D(K,C_1)}{w(K,C_1)}= \frac{s+1}{2} \tau(K).
\]
Moreover, $\frac{D(K,C_0)}{w(K,C_0)}=1$. Hence, 
\[
\left\{ \frac{D(K,C_\lambda)}{w(K,C_\lambda)}, 0 \leq \lambda \leq 1 \right\} 
=  \left[1,  \min \left\{ \frac{s+1}{2}, \frac{s}{2(s-1)} \right\}  \right]. 
\]
Note that $\frac{s+1}{2}$ is increasing, while $\frac{s}{2(s-1)}$ is decreasing on $s \in [1,2]$. Thus, $\min \left\{ \frac{s+1}{2}, \frac{s}{2(s-1)} \right\}$ attains its maximum of $\frac{\varphi+1}2 \approx 1.31$, when $s=\varphi$. 
\end{remark}

We conclude the paper with a consideration of the diameter-width ratio of pseudo-complete sets in the euclidean plane. We do this by first recalling the definition of the hood from \cite{BrG} ($\HH_{min}$ there).

The \cemph{hood} may be defined by 
\[
\HH:= \conv \left( \left\{ \begin{pmatrix}
    0 \\ 1
\end{pmatrix}, \begin{pmatrix}
    r \\ \sqrt{1-r^2}
\end{pmatrix}, \begin{pmatrix}
    -r \\ \sqrt{1-r^2}
\end{pmatrix} \right\} \cup \left(r \cdot \B_2 \right) \right),
\]
where
\[
r=\frac{\sqrt{t}}{2}-1+\sqrt{\frac{16}{\sqrt{t}}-t},\quad \text{and} \quad
t=2\left(\frac23\right)^\frac23 \left( (9+\sqrt{69})^\frac13+(9-\sqrt{69})^\frac13 \right).
\]

 \begin{figure}[ht]
\centering
    \begin{tikzpicture}[scale=3.7]
    \draw [dred, thick] plot [smooth] coordinates {(0.57,0.2) (0.45,0.4) (0,0.8)};
    \draw [dred, thick] plot [smooth] coordinates {(0.57,0.2) (0.6,0) (0.6,-0.53)};
    \draw [dred, thick] plot [smooth] coordinates {(-0.57,0.2) (-0.45,0.4) (0,0.8)};
    \draw [dred, thick] plot [smooth] coordinates {(-0.57,0.2) (-0.6,0) (-0.6,-0.53)};
    \draw [dred, thick] plot [smooth] coordinates {(-0.6,-0.53)  (0,-0.6) (0.6,-0.53) };
    \draw [thick, dashed, gray] (0,0.8) -- (0,-0.6);
    \draw [thick, dashed, gray] (-0.6,0) -- (0,0);
    \draw [thick, dashed, gray] (0,0.8) -- (0.6,-0.53);
    \draw [thick, dashed, gray] (-0.6,-0.53) -- (0.6,-0.53);
    \draw[gray] (0,0) circle [radius=0.6]; 
    \draw[gray] (0,0) circle [radius=0.8];
    \draw [fill] (0,0) circle [radius=0.01]; 
    \draw [fill] (0,0.8) circle [radius=0.015]; 
    \draw [fill] (-0.6,-0.53) circle [radius=0.015]; 
    \draw [fill] (0.6,-0.53) circle [radius=0.015];
    \draw (0.06,0.06) node {$0$};
    \draw (-0.08,0.3) node {$R$};
    \draw (-0.3,0.05) node {$r$};
    \draw (-0.08,-0.48) node {$w$};
    \draw (0.06,-0.12) node {$D$};
    \draw (0.4,0.15) node {$D$};
    \end{tikzpicture}
    \caption{The hood $\HH$ (red); $r(\HH,\B_2) \B_2$ and $R(\HH,\B_2) \B_2$ (gray).
   }\label{fig:hood}
\end{figure}

As shown in \cite{BrG}, the hood has the following properties. First of all, 
\[
r(\HH,\B_2) \B_2 \subset \HH \subset R(\HH,\B_2) \B_2,
\]
with $R(\HH,\B_2)=1$ and $r(\HH,\B_2)=r\approx 0.7935$.

The triangle $\conv \left( \left\{
\begin{pmatrix}
    0 \\ 1
\end{pmatrix}, \begin{pmatrix}
    r \\ \sqrt{1-r^2}
\end{pmatrix}, \begin{pmatrix}
    -r \\ \sqrt{1-r^2}
\end{pmatrix} \right\} \right\} $ is isosceles, 
with the long edges of length $D(\HH,\B_2)=r+1$ and the short of length $w(\HH,\B_2)=2r$. Moreover, we have 
\[
r(\HH,\B_2)+R(\HH,\B_2)=D(\HH,\B_2),
\]
thus $\HH \in \K^2_{ps, \B_2}$ by Proposition \ref{prop:pseudo-complete-charact} and $s(\HH)=\frac{R(\HH,\B_2)}{r(\HH,\B_2)}= \frac{1}{r(\HH,\B_2)} \approx1.27$. Thus,
\[
\frac{D(\HH,\B_2)}{w(\HH,\B_2)}= \frac{r+1}{2r}= \frac{\frac{1}{s(\HH)}+1}{2\frac{1}{s(\HH)}}= \frac{s(\HH)+1}{2} \approx 1.135.
\]

\begin{proof}[Proof of Theorem \ref{lem:dw_eucl}]
Let $K \in \K^2_{ps, \B_2}$, i.e.~ $r(K,\B_2)=D(K,\B_2)-R(K,\B_2)$. W.l.o.g., we assume $R(K,\B_2)=1$. 

Now, on the one hand, since $2r(K,\B_2) \leq w(K,\B_2)$, 
it follows
\[
\frac{D(K,\B_2) }{w(K,\B_2)} \leq \frac{D(K,\B_2)}{2r(K,\B_2)}= \frac{D(K,\B_2)}{2(D(K,\B_2)-R(K,\B_2))}= \left( 2 \left(1- \frac{1}{D(K,\B_2)} \right) \right)^{-1}
\]
However, $2 \left(1- \frac{1}{x} \right)$ is an increasing function for any positive values of $x$, and therefore 
\[
\max_{D(K,\B_2)\in [D(\HH,\B_2), 2]} \frac{D(K,\B_2) }{w(K,\B_2)} \leq 
\left( 2 \left(1- \frac{1}{D(\HH,\B_2)} \right) \right)^{-1}=\frac{D(\HH,\B_2)}{w(\HH,\B_2)}.
\]

On the other hand, since $r(K,\B_2)=D(K,\B_2)-1$, we obtain from \cite[Theorem 3.2]{BrG} 
\begin{align*}
    \frac{D(K, \B_2)}{w(K,\B_2)} & \leq \left(2 \sqrt{1-\left( \frac{D(K, \B_2)}{2 R(K, \B_2)}\right)^2} \cos \left[ \arccos \left( \frac{D(K, \B_2)}{2(D(K, \B_2)- r(K, \B_2))}  \right) \right. \right.\\
    &\left.\left. +\arccos \left( \frac{D(K, \B_2)}{2R(K, \B_2)}  \right)-\arcsin \left( \frac{r(K, \B_2)}{D(K, \B_2)-r(K, \B_2)}  \right) \right]\right)^{-1}\\
    &= \left(2\sqrt{1-\left( \frac{D(K, \B_2)}{2}\right)^2} \cos \left( 2 \arccos \left( \frac{D(K, \B_2)}{2}  \right)-\arcsin \left( r(K, \B_2) \right)\right) \right)^{-1}\\
    &= \left(\sqrt{4- D(K, \B_2)^2} \cos \left( 2 \arccos \left( \frac{D(K, \B_2)}{2}  \right)-\arcsin \left( D(K, \B_2)-1 \right)\right)\right)^{-1}.
\end{align*}
It is easy to verify that $\sqrt{4-x^2} \cos(2\arccos(\frac x 2)-\arcsin(x-1))$ is decreasing for $x \geq \sqrt{3}$.
Thus, 
\[
\max_{D(K,\B_2) \in [\sqrt{3}, D(\HH,\B_2)]} \, \frac{D(K,\B_2) }{w(K,\B_2)} \leq \frac{D(\HH,\B_2)}{w(\HH,\B_2) }.
\]
Since $[\sqrt{3},2]$ covers the full range of possible diameters for pseudo-complete sets with circumradius 1 \cite{BrG} and because $\HH$ attains equality in each of the two inequalities derived above, we conclude 
\[
\frac{D(K,\B_2)}{w(K,\B_2)} \le \frac{D(\HH,\B_2)}{w(\HH,\B_2) } = \frac{s(\HH) +1}2\approx 1.135
\]
for $K \in \K^2_{ps, \B_2}$.
\end{proof}


\bigskip

René Brandenberg -- 
Technische Universität München, Department of Mathematics, Germany. 
\textbf{rene.brandenberg@tum.de}

Katherina von Dichter -- 
	Brandenburgische Technische Universität Cottbus-Senftenberg, Department of Mathematics, Germany. 
\textbf{Katherina.vonDichter@b-tu.de}

Bernardo Gonz\'alez Merino -- 
Universidad de Murcia, 
Departamento de Ingenier\'ia y Tecnolog\'ia de Computadores, 30100-Murcia, Spain.
\textbf{bgmerino@um.es}
\vfill\eject

\end{document}